\documentclass[11pt,leqno]{article}
\usepackage{amsthm,amsfonts,amssymb,amsmath,oldgerm}
\usepackage{epsfig}
\numberwithin{equation}{section}
\usepackage[thinlines]{easybmat}

%\usepackage{showkeys}

%wider margins
\setlength{\evensidemargin}{0in} \setlength{\oddsidemargin}{0in}
\setlength{\textwidth}{6in} \setlength{\topmargin}{0in}
\setlength{\textheight}{8in}

%%%%%%%%%%%

%%%%%%%%%%%%%%%%%%

\renewcommand\a{\alpha}
\renewcommand\b{\beta}

\def\t{\tau}

\def\l{\lambda}
\def\eps{\varepsilon }

%%%%%%%%%%%%%%%%%%%%%%%%%%%%%%%%%%%%%%%%%%%%%%%%

\renewcommand\a{\alpha}

\renewcommand\b{\beta}
\def\t1{{\widetilde{x}}}

\def\t{\tau}

\def\eps{\varepsilon}

\def\l{\lambda}
%%%%%%%%%%%%%%%%%%%%%%%%%%%%%%5

\newcommand\br{\begin{remark}}
\newcommand\er{\end{remark}}
\newcommand\bp{\begin{pmatrix}}
\newcommand\ep{\end{pmatrix}}
\newcommand\be{\begin{equation}}
\newcommand\ee{\end{equation}}
\newcommand\ba{\begin{equation}\begin{aligned}}
\newcommand\ea{\end{aligned}\end{equation}}
\newcommand\ds{\displaystyle}

%%%%%%%%%%%%%%%%%%%%%%%%%%%%%%

\newcommand{\bap}{\begin{app}}
\newcommand{\eap}{\end{app}}
\newcommand{\begs}{\begin{exams}}
\newcommand{\eegs}{\end{exams}}
\newcommand{\beg}{\begin{example}}
\newcommand{\eeg}{\end{exaplem}}
\newcommand{\bpr}{\begin{proposition}}
\newcommand{\epr}{\end{proposition}}
\newcommand{\bt}{\begin{theorem}}
\newcommand{\et}{\end{theorem}}
\newcommand{\bc}{\begin{corollary}}
\newcommand{\ec}{\end{corollary}}
\newcommand{\bl}{\begin{lemma}}
\newcommand{\el}{\end{lemma}}
\newcommand{\bd}{\begin{definition}}
\newcommand{\ed}{\end{definition}}
\newcommand{\brs}{\begin{remarks}}
\newcommand{\ers}{\end{remarks}}

%%%%%%%%%%%%%%%%%%%%%%%%%%%%%%%%%%%%%%%%%%%%%%%%%%
\newtheorem{theo}{Theorem}[section]

\newtheorem{exams}[theo]{Examples}

\numberwithin{equation}{section}
%%%%%%%%%%%%%%%%%%%%%%

%%%%%%%%%%%%%%%%%%%%%%%%%%%%%%%%%%%%%%%%%%%%%%%%%

\newcommand{\mA}{{\mathbb A}}

\newcommand{\RR}{{\mathbb R}}

\newcommand{\ZZ}{{\mathbb Z}}

\newcommand{\CC}{{\mathbb C}}

%%%%%%%%%%%%%%%%%%%%%%%%%%%%%%%%%%%%%%%%%%%%%%%%%%
\newtheorem{theorem}{Theorem}[section]
\newtheorem{proposition}[theorem]{Proposition}
\newtheorem{corollary}[theorem]{Corollary}
\newtheorem{lemma}[theorem]{Lemma}
\newtheorem{definition}[theorem]{Definition}

\newtheorem{example}[theorem]{Example}
\newtheorem{remark}[theorem]{Remark}

%\numberwithin{equation}{section}
%%%%%%%%%%%%%%%%%%%%%%

%\usepackage{showkeys}

%%%%%%%%%%%%%%%% Mat's macros

%\def\baselinestretch{1.0}
%\def\baselinestretch{2.0}
\pagestyle{headings}

%\newcommand{\C}{\nu_H}

%\newcommand{\MM}{\,\mbox{\bf M}}

%\newcommand{\sgn}{\operatorname{sgn}}

%%%%%%%%%%%%% fonts/sets %%%%%%%%%%%%%%%%

%%%%%%%%%%%%  abbreviations %%%%%%%%%%%%%%%

%\newcommand{\spec}{\text{\rm{spec}}}

%\newcommand{\dim}{\text{\rm{dim}}}

%\newcommand{\ess}{\text{\rm{ess}}}

%\newcommand{\dist}{\text{\rm{dist}}}

\newcommand{\beq}{\begin{equation}}
\newcommand{\eeq}{\end{equation}}
\newcommand{\bs}{\begin{split}}
\newcommand{\es}{\end{split}}

\newcommand{\vp}{\varphi}

%\newcommand{\tr}{\text{\rm{tr}}}

% Absolute value notation

%%%%%%%%%%%%%%%%%%%%%%%%%%%%%%%%%%%%%%%%%%%%%%%%%%%%%%
\title{Pointwise stability estimates for periodic traveling wave solutions of systems of viscous conservation laws}

%%%%%%%%%%%%%%%%%%%%%%%%%%%%%%%%%%%%%%%%%%%%%%%%%%%%%
\author{\sc \small
Soyeun Jung\thanks{Indiana University, Bloomington, IN 47405;
soyjung@indiana.edu
 }}
\begin{document}

\maketitle

\begin{abstract}
In the previous paper \cite{J1},  we established pointwise bounds for the Green function of the linearized equation
associated with  spatially periodic traveling waves $\bar u$ of a system of reaction diffusion equations, and
also obtained pointwise nonlinear stability and behavior of $\bar u$ under small perturbations. In this paper, using periodic resolvent kernels and the Bloch-decomposition, we establish pointwise bounds for the Green function of the linearized equation
associated with periodic standing waves $\bar u$ of a system of conservation laws. We also show pointwise nonlinear stability of $\bar u$  by estimating decay of modulated perturbation $v$ of $\bar u$ under small perturbation $|v_0| \leq E_0(1+|x|)^{-\frac{3}{2}}$ for sufficiently small $E_0>0$.
\end{abstract}

%%%%%%%%%%%%%%%%%%%%%%%%%%%%%%%%%%%%%%%%%%%%%%%%%%%%%%%%%%%%%%%%%%%%%%%%%%%%%%%%%%%%%%%%%%%%%

%%%%%%%%%%%%%%%%%%%%%%%%%%%%%%%%%%%%%%%%%%%%%%%%

\section{Introduction}
%%%%%%%%%%%%%%%%%%%%%%%%%%%%%%%%%%%%%%%%%%%%%%%%

In this paper, we obtain pointwise bounds for the Green function of the linearized equations associated with spatially periodic traveling waves of systems of conservation laws extending previous work for reaction-diffusion systems in \cite{J1},  and using pointwise Green function bounds we establish the pointwise stability estimates for the periodic traveling waves. Compared with the previous work for reaction-diffusion systems, the main difference is that the Green function of the linearized operator with respect to the periodic traveling waves of conservation laws decays more slowly. This is because of the spectral structure of an eigenvalue $\l=0$ of the linear operator (Lemma \ref{nonsemisimple}).

We consider systems of viscous conservation laws of form
\be \label{cs}
u_t=u_{xx}+f(u)_x,
\ee
where $(x,t)\in \RR \times \RR^+,$  $u\in \mathcal{U}(\text{open}) \in  \RR^n$, and $f:\RR^n\rightarrow \RR^n$ is sufficiently smooth.

The $L^p$ nonlinear stability of the periodic traveling waves of systems of conservation laws have been obtained by Johnson-Zumbrun in all dimensions(\cite{JZ1} and \cite{JZ3}).  Here, following their basic approach, but a more detailed linear analysis, we establish the pointwise stability of the periodic traveling waves by deriving pointwise descriptions of localized modulated perturbations of $\bar u$.

\subsection{Assumptions}

We follow \cite{JZ1} and \cite{JZ3} in our assumptions. We assume the existence of an X-periodic traveling wave solution with boundary conditions $\bar u(0)=\bar u(X)=:\bar u_0$ of \eqref{cs} of the form
\be
u(x,t)=\bar u(x-st),
\notag
\ee
where $s$ is the speed of the traveling wave. Plugging $\bar u(x-st)$ into \eqref{cs}, we have
\be
-s\bar u'=\bar u''+f(\bar u)'.
\notag
\ee
Integrating both sides, we obtain the profile equation
\be \label{profile}
-s\bar u+q=\bar u' + f(\bar u),
\ee
where $(\bar u_0, q, s, X) \equiv$ constant. Without of loss of generality, we take s=0, that is, $\bar u(x)$ is a periodic standing wave solution of \eqref{cs}. For the existence of periodic solutions of \eqref{profile}, we make the following  assumptions (\cite{JZ1}, \cite{JZ3}, \cite{S}): \\
\\
(H1) The map $H: \RR \times  \mathcal{U} \times \RR \times \RR^n$ taking $(X;w,s,q) \mapsto u(X;w,s,q) - w$ is full rank at $(\bar X; \bar u(0), 0, \bar q)$, where $u(\cdot)$ is the solution operator of \eqref{profile}.  \\

By the Implicit Function Theorem, the condition (H1) implies that the set of periodic solutions of  \eqref{profile} vicinity of $\bar u$ form a smooth $(n+2)$-dimensional manifold $\{ \bar u^a(x-\a -s(a)t) \}$ with $\a \in \RR$ corresponding to translation and $a \in \RR^{n+1}$. \\

Linearizing  \eqref{cs} about a standing-waves solution
$\bar{u}(x)$ gives the second-order spectral  problem
\be\label{sp}
\begin{split}
\l v=Lv:
& =v_{xx}+(df(\bar u)v)_x \\
& = (\partial_x^2+df(\bar u)\partial_x + df(\bar u)_x)v
\end{split}
\ee
considered on the real Hilbert space $L^2(\RR)$. As coefficients of $L$ are $1$-periodic, Floquet theory implies that
the $L^2$ spectrum is purely continuous and corresponds to the union of
%the $L^\infty$ eigenvalues corresponding to considering the linearized operator with boundary conditions $v(x+1)=e^{i\xi}v(x)$ for all $x \in \RR$, where $\xi \in [-\pi, \pi)$ is referred to as the Floquet exponent and is uniquely defined mod $2\pi$.
%In particular, $\l \in \sigma(L)$ if and only if the spatially periodic spectral problem
$\l$ such that \eqref{sp} admits a bounded eigenfunction of the form
\be\label{spb}
v(x)=e^{i\xi x}w(x), \quad \xi \in \RR
\ee
where $w(x+1)=w(x)$, that is, the eigenvalues of the family
of associated Floquet, or Bloch, operators
\be\label{bloch operator}
L_\xi : = e^{-i\xi x} L e^{i \xi x}=(\partial_x^2+i\xi)^2+df(\bar u)(\partial_x+i\xi) + df(\bar u)_x, \quad \text{for} \quad \xi \in [-\pi, \pi),
\ee
considered as acting on $L^2$ periodic functions on $[0,1]$.
%determined by the defining relation
%\be
%L(e^{i\xi x}f)=e^{i\xi x}(L_{\xi}f) \quad \text{for}\quad  f \quad \text{periodic}.
%\ee
%The $L^2$ spectrum of the linearized operator $L$ is readily seen to be given by the union of the spectra of the Bloch operators.
%By continuity of the spectrum on the Floquet parameter $\xi$, and the discreteness of the spectrum of the elliptic operator L on the compact domain $[0,1]$, it follows that the spectra of L may be described as the union of countably many continuous surfaces $\l(\xi)$.

Recall that any function $g \in L^2(\RR)$ admits an inverse Bloch-Fourier representation
\be
g(x)= \frac{1}{2\pi}\int_{-\pi}^{\pi } e^{i\xi x}\check g(\xi,x) d\xi.
\notag
\ee
where $\check g(\xi,x) = \sum_{j \in \ZZ}e^{i2\pi jx}\hat g(\xi+2\pi j)$ is a $1$-periodic functions of $x$, and $\hat g(\cdot)$ denotes the Fourier transform of $g$ with respect to $x$. Indeed, using the Fourier transform we have
\be
 2\pi g(x)=\int_{-\infty}^\infty e^{i\xi x}\hat g(\xi) d\xi=\ds \sum_{j\in \ZZ} \int_{-\pi}^{\pi}e^{i(\xi+2\pi j)x}\hat g(\xi+2\pi j)d\xi = \int_{-\pi}^{\pi} e^{i\xi x}\check g(\xi,x) d\xi.
\notag
\ee
%where the summation and integral can be interchanged for
%Schwartz functions g.
Since
$L(e^{i\xi x}f)=e^{i\xi x}(L_{\xi}f)$
for $f$ periodic,
the Bloch-Fourier transform
diagonalizes the periodic-coefficient operator $L$, yielding the
inverse Bloch-Fourier transform representation
\be\label{inverse BF}
e^{Lt}g(x)= \frac{1}{2\pi}\int_{-\pi }^{\pi } e^{i\xi x}e^{L_\xi t}\check g(\xi,x)
d\xi.
\ee

We now discuss the strong spectral stability conditions of the periodic traveling waves $\bar u(\cdot)$.  By the translation invariant of \eqref{cs}, $\bar u'(x)$ is a 1-periodic function such that $L_0\bar u'=0$. It follows that $\l=0$ is an eigenvalue of the linear operator $L_0$. Moreover, the zero eigenspace of $L_0$ is at least $(n+1)$-dimensional (\cite{JZ1}, \cite{S}). Following \cite{JZ1} and \cite{OZ2}, we assume along with (H1) the following strong spectral stability conditions:\\
\\
 (D1) $\sigma(L) \subset \{Re\l < 0\} \cup \{ 0\}$. \\
 (D2) There exists a $\theta >0$ such that for all $\xi \in [-\pi,\pi]$ we have  $\sigma(L_{\xi}) \subset \{Re\l < -\theta|\xi|^2 \}$. \\
 (D3) $\l=0$ is an eigenvalue of $L_0$ of multiplicity exactly $n+1$. \\
\\
 Conditions (D1)-(D3) correspond to ``dissipativity" of the large-time behavior of the linearized system.  By standard spectral perturbation theory and assumption (D3), there exist $n+1$ smooth eigenvalues $\l_j(\xi)$ analytic at $\xi=0$ of $L_{\xi}$ bifurcating from $\l=0$ at $\xi=0$ with
\be \label{eigenvalue}
\l_j(\xi)=-ia_j\xi-b_j\xi^2+O(|\xi|^3),
\ee
where $a_j$ and $b_j >0$ are real. Moreover, we make the further nondegeneracy hypothesis(\cite{JZ1}, \cite{OZ2}): \\
\\
(H2) $a_j$ in \eqref{eigenvalue} are distinct.\\

\begin{remark}
In  (D3), $\l=0$ does not need to be a semisimple eigenvalue of $L_0$. This is the main difficulty of systems of conservation laws  compared with the pervious work for reaction-diffusion system(\cite{J1}).
\end{remark}

\begin{remark} [\cite{J1}]
The condition $(D3)$ may be readily verified by direct numerical Evans function analysis as described in \cite{BJNRZ1,BJNRZ2}.
\end{remark}

%%%%%%%%%%%%%%%%%%%%%%%%%%%%%%%%%%%%%%%%%%%%%%%%5
%%%%%%%%%%%%%%%%%%%%%%%%%%%%%%%%%%%%%%%%%%%%%%%%%
\subsection{First-order systems}

%%%%%%%%%%%%%%%%%%%%%%%%%%%%%%%%%%%%%%%%%%%%%%%%%%%
%%%%%%%%%%%%%%%%%%%%%%%%%%%%%%%%%%%%%%%%%%%%%%%%

Rewriting the eigenvalue equation $\eqref{sp}$ as a first-order system
\be \label{firstorder}
V'=\mA(\l,x)V,
\ee
where
\be
V=\bp v\\v'\ep, \quad \mA =\bp 0 & I \\ \lambda I -df(\bar u)_x  & - df(\bar u) \ep,
\notag
\ee
denote by $\mathcal F^{y \to x} \in \CC^{2n\times 2n}$ the solution operator of \eqref{firstorder}, defined by $\mathcal{F}^{y\to y}=I$, $\partial_x \mathcal{F}=\mA \mathcal{F}$.

By the definition of Bloch operators \eqref{bloch operator}, for each $\xi \in [-\pi, \pi]$, we have a second-order eigenvalue equation
\be\label{eig}
\l u = L_\xi u=u''-A_\xi u'-C_\xi u,
\ee
where $A_\xi = -2i\xi I-df(\bar u) \in \CC^{n\times n}$ and $C_\xi(x) = -df(\bar u)_x-i\xi df(\bar u)+\xi^2I \in \CC^{n\times n}$.
Rewriting \eqref{eig} as a first-order system
\be\label{firstorder sysem}
U'=\mA_\xi (x,\lambda)U,
\ee
where
\be\label{coeffs}
U=\bp u\\u'\ep, \quad \mA_\xi =\bp 0 & I \\ \lambda I + C_\xi & A_\xi \ep,
\ee
similarly, denote by $\mathcal{F}_\xi ^{y\to x}\in \CC^{2n\times 2n}$ the solution operator of \eqref{firstorder sysem}, defined by $\mathcal{F}_\xi ^{y\to y}=I$, $\partial_x
\mathcal{F}_\xi =\mA_\xi \mathcal{F}_\xi$.

\subsection{Spectral preparation}
We now state a key lemma from \cite{JZ1} describing the structure of the null space of the operator $L_{\xi}$ for sufficiently small $|\xi|$, which we will need in order to state our main result.  The condition (D3) tells that there are $n+1$ general eigenfunctions of $L_0$, and so the following lemma describes the way in which these eigenfunctions of $L_0$ bifurcate in $\xi$, see \cite{JZ1} for proof.

\begin{lemma}[JZ1]\label{nonsemisimple}
Assuming (H1)-(H2), (D1)- (D3), the eigenvalue $\l_j(\xi)$ of $L_{\xi}$ are analytic functions of $\xi$. Suppose further that 0 is a non-semisimple eigenvalue of $L_0$. Then the Jordan structure of the zero eigenspace of $L_0$ consists of an n-dimensional kernel and a single Jordan chain of height 2. In particular, $\bar u'$ spans the right eigendirection lying at the base of the Jordan chain while the left kernel of $L_0$ coincides with the n-dimensional subspace of constant functions. Moreover, for $|\xi|$ sufficiently small, there exist right and left eigenfunctions $q_j(\xi,x)$ and $\tilde q_j(\xi,x)$ of $L_{\xi}$ associated with $\l_j$ of form $q_j(\xi,x)=\sum_{k=1}^{n+1}\b_{j,k}(\xi)v_k(\xi,x)$ and $\tilde q_j(\xi,x)=\sum_{k=1}^{n+1}\tilde{\b}_{j,k}(\xi)\tilde v_k(\xi,x)$, where $\{v_j\}$ and $\{\tilde v_j\}$ are dual bases of the total eigenspace of $L_{\xi}$ associated with sufficiently small eigenvalues, analytic in $\xi$, with $\tilde v_j(0,x)$ constant for $j\neq n$ and $v_n(0,x)=\bar u'(x)$; $\tilde \b_{j,1}$, $\cdots$, $\tilde \b_{j,n-1}$, $\xi^{-1}\tilde \b_{j,n}$, $\tilde \b_{j,n+1}$ and $\b_{j,1}$, $\cdots$, $\b_{j,n-1}$, $\xi\b_{j,n}$, $\b_{j,n+1}$ are analytic in $\xi$; and $<\tilde q_j, q_k>=\delta_{j}^k$.
\end{lemma}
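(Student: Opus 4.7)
The plan is to combine standard analytic perturbation theory with a direct identification of the Jordan structure at $\xi=0$ from the profile equation and its formal adjoint, and then to extract the stated analyticity of the coefficients via a careful normalisation of the right and left eigenvectors.

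By (D3), $\lambda=0$ is an isolated eigenvalue of $L_0$ of finite algebraic multiplicity $n+1$, and since $\xi\mapsto L_\xi$ is a holomorphic family (the coefficients in \eqref{bloch operator} are entire in $\xi$), Kato's theory produces a total spectral projection $P(\xi)$ onto the eigenvalue group bifurcating from $0$, with $\mathrm{rank}\,P(\xi)\equiv n+1$ for $|\xi|$ small. I would pick any holomorphic basis $\{v_j(\xi,\cdot)\}_{j=1}^{n+1}$ of $\mathrm{Range}\,P(\xi)$; the biorthogonal dual family $\{\tilde v_j(\xi,\cdot)\}$ defined by $\langle\tilde v_j,v_k\rangle=\delta_{jk}$ is then likewise analytic. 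Analyticity of each individual $\lambda_j(\xi)$ follows from (H2), whose assertion that the $a_j$ are distinct provides linear (rather than $\sqrt{\xi}$) splitting of the size-$2$ Jordan block, so no Puiseux branches with fractional powers occur.

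Next I identify the Jordan data at $\xi=0$. Differentiating the profile equation $\bar u'+f(\bar u)=q$ twice gives $L_0\bar u'=0$, so $\bar u'\in\ker L_0$. Integration by parts identifies the formal $L^2$-adjoint as $L_0^\ast v=v''-df(\bar u)^\top v'$, which annihilates every constant vector; hence the $n$-dimensional space of constants is contained in $\ker L_0^\ast$, giving $\dim\ker L_0^\ast\ge n$. Non-semisimplicity of $0$ together with the Fredholm-index-zero identity $\dim\ker L_0=\dim\ker L_0^\ast$ and the upper bound $\dim\ker L_0<n+1$ then force this common dimension to equal $n$, so $\ker L_0^\ast$ coincides with the constants. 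Periodicity yields $\int_0^1\bar u'\,dx=0$, so $\bar u'\perp\ker L_0^\ast$ and hence $\bar u'\in\mathrm{Range}\,L_0$; choosing $w$ with $L_0 w=\bar u'$ produces a Jordan chain of height $2$ over $\bar u'$. Together with $n-1$ additional semisimple null directions this exhausts the $(n+1)$-dimensional generalised null space.

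With this in hand I would normalise the holomorphic basis so that $v_n(0,\cdot)=\bar u'$, $v_{n+1}(0,\cdot)=w$, and $v_1(0,\cdot),\dots,v_{n-1}(0,\cdot)$ complete a basis of $\ker L_0$; by biorthogonality $\tilde v_j(0,\cdot)$ is then forced to be constant for $j\ne n$, while $\tilde v_n(0,\cdot)$ is the adjoint Jordan top. Let $M(\xi)\in\CC^{(n+1)\times(n+1)}$ denote the holomorphic matrix representing $L_\xi|_{\mathrm{Range}\,P(\xi)}$ in this basis; right and left eigenvectors of $L_\xi$ for $\lambda_j(\xi)$ then take the form $q_j=\sum_k\beta_{j,k}v_k$ and $\tilde q_j=\sum_k\tilde\beta_{j,k}\tilde v_k$ where the coefficient vectors are the right and left eigenvectors of $M(\xi)$.

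The final step, and the main obstacle, is the singular behaviour of the coefficients at $\xi=0$. A short algebraic computation on the characteristic polynomial of the size-$2$ Jordan block of $M(\xi)$ shows that the linear splitting required by (H2) is equivalent to the $(n+1,n)$-entry of $M(\xi)$ vanishing to order $\xi^2$. Under this vanishing, one may solve $M(\xi)\beta=\lambda_j(\xi)\beta$ order by order in $\xi$ to obtain right and left eigenvectors of $M(\xi)$ that are analytic in $\xi$, but whose biorthogonal pairing $\langle\tilde q_j,q_j\rangle$ vanishes exactly to order $\xi$ at $\xi=0$ (a reflection of the Jordan collision). Enforcing $\langle\tilde q_j,q_k\rangle=\delta_{jk}$ by absorbing this $O(\xi)$ factor entirely into $q_j$ (i.e.\ rescaling $q_j\mapsto q_j/\xi$ and leaving $\tilde q_j$ unchanged) introduces a simple pole into $\beta_{j,n}$ while keeping $\beta_{j,k}$ analytic for $k\ne n$; dually, the un-rescaled left eigenvector has $\tilde\beta_{j,n}$ with a simple zero at $\xi=0$ and the other $\tilde\beta_{j,k}$ analytic. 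This finite-dimensional matrix calculation is the content of \cite{JZ1}.
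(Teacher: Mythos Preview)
The paper does not prove this lemma: the sentence immediately preceding it reads ``see \cite{JZ1} for proof,'' and no argument is supplied in the text. So there is no in-paper proof to compare against; your sketch already goes further than the paper itself. Its overall architecture---Kato analytic perturbation theory for the total projector $P(\xi)$ and an analytic basis $\{v_k(\xi,\cdot)\}$, direct identification of the Jordan data at $\xi=0$ from the divergence form $L_0v=(v'+df(\bar u)v)'$ and its adjoint, then reduction to the finite-dimensional matrix $M(\xi)$---is the natural route and is the approach of the cited reference. Your identification of $\ker L_0^\ast$ with the constants, of the height-$2$ chain over $\bar u'$, and of the resulting constancy of $\tilde v_j(0,\cdot)$ for $j\neq n$ is correct.

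The one place the sketch is loose is the last paragraph. Saying that the global rescaling $q_j\mapsto q_j/\xi$ ``introduces a simple pole into $\beta_{j,n}$ while keeping $\beta_{j,k}$ analytic for $k\ne n$'' tacitly assumes that the un-rescaled analytic right eigenvector satisfies $\beta_{j,k}(0)=0$ for every $k\neq n$, i.e.\ that it collapses onto $e_n$ at $\xi=0$. That does not follow from the single condition $M_{n+1,n}(\xi)=O(\xi^2)$ you isolate; in the full $(n+1)\times(n+1)$ problem one needs the entire $n$-th column of $M'(0)$ to vanish off the diagonal. This is exactly what the conservation-law structure provides: for $j\neq n$ the dual vector $\tilde v_j(0,\cdot)$ is constant and $L_\xi'\big|_{\xi=0}\,\bar u'=i\big(2\bar u''+(f(\bar u))'\big)$ is an exact derivative with zero mean, so $\langle\tilde v_j(0,\cdot),L_\xi'|_{\xi=0}\bar u'\rangle=0$. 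With that structural input in hand (and noting that only the modes bifurcating from the Jordan block actually need the $\xi^{-1}$ rescaling, while the remaining $n-1$ stay regular so that $\xi\beta_{j,n}$ is trivially analytic for them), your order-by-order construction and the claimed analyticity of $\xi\beta_{j,n}$ and $\xi^{-1}\tilde\beta_{j,n}$ go through.
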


\begin{remark} \label{remark for spectral structure}
If $\l=0$ is a semisimple eigenvalue of $L_0$, the general right and left eigenvectors are genuine right and left eigenvectors. That is, we can simply say that there are right eigenfunctions $q_j(\xi,x)$ and left eigenfunctions $\tilde q_j(\xi,x)$ of the operator $L_{\xi}$, respectively, associated with the eigenvalue $\l_j(\xi)$ for each $j=1, \dots, n+1$, analytic in $\xi$ for sufficiently small $|\xi|$,  with the normalization condition  $<\tilde q_j, q_k>=\delta_{j}^k$. In the semisimple case, the pointwise Green function $G(x,t;y)$ bound of the linearized operator $L$ is similar to that of the previous work(reaction-diffusion case, \cite{J1}) with several modes of heat kernels, see \cite{JZ3} and \cite{OZ1} for the semisimple case.
\end{remark}

%%%%%%%%%%%%%%%%%%%%%%%%%%%%%%%%%%%%%%%%%%%%%%%%%%%%%
%%%%%%%%%%%%%%%%%%%%%%%%%%%%%%%%%%%%%%%%%%%%%%%
%%%%%%%%%%%%%%%%%%%%%%%%%%%%%%%%%%%%%%%%%%%%%%%

\subsection{Main results}
With these preparations, we state here our two main results. In theorem \ref{main theorem1}, we determine pointwise estimates for the  Green function $G(x,t;y)$ of \eqref{sp} which is  the linearization  about standing-wave solutions $\bar u$ of systems of conservation laws. In theorem \ref{main theorem2}, using pointwise bounds of G, we show pointwise stability estimates for $\bar u$ by deriving the pointwise decay of the modulated perturbation of $\bar u$ under the sufficiently small initial data.

\begin{theorem} \label{main theorem1}
The Green function $G(x,t;y)$ for equation \eqref{sp} satisfies the estimates:
\be
\begin{split}
G(x,t;y)
& = \bar u'(x)\sum_{j=1}^{n+1}\sum_{l \neq n}^{n+1} \check \b_{j,n}(0) \tilde \b_{j,l}(0) \tilde v_l(0,y)\text{errfn}\left(\frac{|x-y-a_jt|^2}{\sqrt{t}}\right) \\
& \qquad + \bar u'(x)\sum_{j=1}^{n+1} \check \b_{j,n}(0) \check{\tilde \b}_{j,n}(0) \tilde v_n(0,y)\frac{1}{\sqrt{4\pi b_jt}}e^{-\frac{|x-y-a_jt|^2}{4b_jt}} \\
%& = \bar u'(x)\sum_{j=1}^{n+1}\check \b_{j,n}(0) \left( \sum_{l \neq n}^{n+1}\tilde \b_{j,l}(0) \tilde v_l(0,y)\text{errfn}\left(\frac{|x-y-a_jt|^2}{\sqrt{t}}\right)+\check{\tilde \b}_{j,n}(0) \tilde v_n(0,y)\frac{1}{\sqrt{4\pi b_jt}}e^{-\frac{|x-y-a_jt|^2}{4b_jt}} \right) \\
& \qquad + O\left(\sum_{j=1}^{n+1} t^{-\frac{1}{2}}e^{-\frac{|x-y-a_j t|^2}{Mt}}\right), \\
G_y (x,t;y)
& = \bar u'(x)\sum_{j=1}^{n+1}\check \b_{j,n}(0)\left( \sum_{l \neq n}^{n+1} \tilde \b_{j,l}(0)  \tilde v_l(0,y)+ \check{\tilde \b}_{j,n}(0)\tilde v_n^\prime(0,y)\right) \frac{1}{\sqrt{4\pi b_jt}}e^{-\frac{|x-y-a_j t|^2}{4b_jt}}\\
& \qquad + O\left(\sum_{j=1}^{n+1} t^{-1}e^{-\frac{|x-y-a_j t|^2}{Mt}}\right), \\
\end{split}
\notag
\ee
uniformly on $t\geq 0$, for some sufficiently large constant $M>0$, where $\ds \check{\b}_{j,n}(0)= \lim_{\xi \rightarrow 0}\xi \b_{j,n}(\xi)$ and $\ds \check{\tilde \b}_{j,n}(0)= \lim_{\xi \rightarrow 0}\xi^{-1} \tilde \b_{j,n}(\xi)$ for $\b_{j,n}(\xi)$,  $\tilde \b_{j,n}(\xi)$, $v(\xi,x)$ and $\tilde v(\xi,x)$ defined in Lemma \ref{nonsemisimple}.
\end{theorem}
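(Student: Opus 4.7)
The plan is to derive the stated Green-function bounds from the inverse Bloch--Fourier representation \eqref{inverse BF}. Applied to the fundamental solution, this yields
\be
G(x,t;y) = \frac{1}{2\pi}\int_{-\pi}^{\pi} e^{i\xi(x-y)}\,G_\xi(x,y,t)\, d\xi,
\notag
\ee
where $G_\xi(\cdot,y,t)$ is the Green kernel of $e^{L_\xi t}$ acting on $1$-periodic functions on $[0,1]$. I would then introduce a smooth cut-off $\phi(\xi)$ supported in a small neighborhood of $\xi=0$ and split $G=G_I+G_{II}$ according to $\phi(\xi)$ and $1-\phi(\xi)$.

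For the low-frequency part $G_I$, I would use the spectral decomposition of $L_\xi$ provided by Lemma \ref{nonsemisimple}: for $|\xi|$ sufficiently small,
\be
G_\xi(x,y,t) = \sum_{j=1}^{n+1} e^{\lambda_j(\xi)t}\, q_j(\xi,x)\otimes \tilde q_j(\xi,y) + R_\xi(x,y,t),
\notag
\ee
where $R_\xi$ captures the portion of $\sigma(L_\xi)$ bounded away from $\lambda=0$ and is exponentially damped in $t$ by (D1)--(D2). Expanding $q_j=\sum_k \beta_{j,k}(\xi)v_k(\xi,\cdot)$ and $\tilde q_j=\sum_k \tilde\beta_{j,k}(\xi)\tilde v_k(\xi,\cdot)$, the singular factor $\beta_{j,n}(\xi)\sim \check\beta_{j,n}(0)\xi^{-1}$ in the right eigenfunction is paired with the vanishing factor $\tilde\beta_{j,n}(\xi)\sim \check{\tilde\beta}_{j,n}(0)\,\xi$ in the left eigenfunction. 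Grouping terms by order in $\xi$, the leading part of $q_j\otimes\tilde q_j$ splits naturally into a regular $\xi^0$ contribution $\check\beta_{j,n}(0)\check{\tilde\beta}_{j,n}(0)\,\bar u'(x)\otimes \tilde v_n(0,y)$ (no cancellation of singularities) and a $\xi^{-1}$-singular contribution $\check\beta_{j,n}(0)\tilde\beta_{j,l}(0)\,\bar u'(x)\otimes \tilde v_l(0,y)$ for $l\neq n$ coming from the Jordan chain structure; remaining terms are of higher order in $\xi$ and can be absorbed into the final error.

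Using $\lambda_j(\xi)=-ia_j\xi-b_j\xi^2+O(|\xi|^3)$ from \eqref{eigenvalue}, a direct Gaussian integration gives
\be
\int_{-\pi}^{\pi}\phi(\xi)\,e^{i\xi(x-y)+\lambda_j(\xi)t}\,d\xi = \frac{1}{\sqrt{4\pi b_jt}}\,e^{-\frac{|x-y-a_jt|^2}{4b_jt}} + O\!\left(t^{-1}e^{-\frac{|x-y-a_jt|^2}{Mt}}\right),
\notag
\ee
while transferring one antiderivative in $\xi$ from the factor $\xi^{-1}$ onto the Gaussian kernel converts the singular piece to an $\errfn\bigl(\tfrac{x-y-a_jt}{\sqrt{t}}\bigr)$ contribution modulo the same class of exponentially decaying remainders. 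The high-frequency part $G_{II}$ is handled by invoking (D1)--(D2) uniformly on $\supp(1-\phi)$ together with standard parabolic resolvent estimates as in \cite{JZ1,J1}, yielding exponential-in-$t$ decay absorbed into the $O$-term. The formula for $G_y$ then follows by differentiating the Bloch representation in $y$: this brings an extra factor $-i\xi$ into each integrand, which kills the $\xi^{-1}$ singularity and therefore replaces each errfn term by its derivative, a Gaussian with the same coefficient, while the $\tilde v_n'(0,y)$ piece comes from differentiating the non-constant eigenfunction $\tilde v_n(\xi,y)$ in $y$. The principal obstacle will be step two: correctly isolating the $\xi^{-1}$-singular contribution coming from the non-semisimple Jordan block at $\xi=0$, since it is precisely this piece --- absent in the reaction-diffusion setting of \cite{J1} --- that produces the slowly decaying errfn terms and distinguishes the conservation-law Green function from the purely Gaussian case.
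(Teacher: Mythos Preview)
Your low-frequency analysis is essentially the paper's: the splitting via $\phi(\xi)P(\xi)$, the expansion of $q_j\otimes\tilde q_j$ through Lemma~\ref{nonsemisimple}, and the identification of the $\xi^{-1}$-singular piece (producing the errfn terms) versus the regular $\xi^0$ piece (producing the Gaussian) are exactly the decomposition $\mathcal L=I+II+III+IV+V$ carried out there. The paper handles the remainder terms $III$--$V$ by viewing the $\xi$-integral as a complex contour integral and shifting the contour by an amount $\tilde\alpha\sim(x-y-a_jt)/t$, which is a bit more than ``direct Gaussian integration'' but is standard from \cite{J1}.

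There is, however, a genuine gap: you never treat the regime $|x-y|/t$ large. Your Bloch representation expresses $G$ through the \emph{periodic} kernels $G_\xi(x,y,t)$, and your high-frequency bound yields only exponential decay in $t$, with no decay in $|x-y|$. When $|x-y|/t$ is bounded this suffices --- $e^{-\eta t}\le e^{-|x-y-a_jt|^2/(Mt)}$ for $M$ large --- but when $|x-y|/t\to\infty$ the claimed Gaussian bound in $x-y$ cannot be extracted from the Bloch side because of aliasing: the periodic extension $[G_\xi]$ is $1$-periodic in $x$ and $y$ and hence cannot itself decay in $|x-y|$. The paper handles this regime (Case~(i)) by abandoning the Bloch representation entirely and using instead the whole-line resolvent kernel $G_\lambda(x,y)$ via the inverse Laplace transform of Proposition~\ref{proposition from ZH}, together with the large-$|\lambda|$ bound $|G_\lambda(x,y)|\le C|\lambda|^{-1/2}e^{-\beta^{-1/2}|\lambda|^{1/2}|x-y|}$. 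This is precisely why Sections~\ref{resolvent kernel}--\ref{pointwse bounds of resolvent kernel} exist, and it is a step that cannot be absorbed into the \cite{J1,JZ1} citation you attach to the high-frequency Bloch piece.
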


\begin{theorem}\label{main theorem2}
Let $\bar u$ be a periodic standing-wave solution of \eqref{cs} and let $u:=\tilde u-\bar u$, where $\tilde u$ is any solution of \eqref{cs} such that $|\tilde u(x,0)-\bar u(x,0)| \leq E_0(1+|x|)^{-\frac{3}{2}}$, $E_0$ sufficiently small. Then for some $\vp(\cdot,t) \in W^{2,\infty}$, we have the pointwise estimates
\be 
|\tilde u(x-\vp(x,t),t)-\bar u(x)| \leq CE_0(\theta + \psi_1+\psi_2),
\notag
\ee
where
\be
\theta(x,t):=\sum_{j=1}^{n+1}(1+t)^{-\frac{1}{2}}e^{-\frac{|x-a_jt|^2}{M't}},
\notag
\ee
\be
\psi_1(x,t):=\chi(x,t)\sum_{j=1}^{n+1}(1+|x|+t)^{-\frac{1}{2}}(1+|x-a_jt|)^{-\frac{1}{2}}
\notag
\ee
and
\be
\psi_2(x,t):=(1-\chi(x,t))(1+|x-a_1t|+\sqrt t)^{-\frac{3}{2}}+(1-\chi(x,t))(1+|x-a_{n+1}t|+\sqrt t)^{-\frac{3}{2}},
\notag
\ee
where $\chi(x,t) =1$ for $x \in [a_1t,a_{n+1}t]$ and zero otherwise, and $M'>0$ is a sufficiently large constant with $M'>M$.
\end{theorem}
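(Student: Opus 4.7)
The plan is to follow the Johnson--Zumbrun modulated-perturbation framework, combining the pointwise Green function bounds of Theorem \ref{main theorem1} with an appropriately chosen phase $\varphi$ and a nonlinear Duhamel iteration, as in \cite{J1} and \cite{JZ1}. First I would introduce the modulated perturbation $v(x,t) := \tilde u(x - \varphi(x,t),t) - \bar u(x)$ and substitute into \eqref{cs}. A chain-rule computation using the profile equation \eqref{profile} yields a nonlinear perturbation equation of the schematic form $(\partial_t - L)(v - \bar u'(x)\varphi) = \partial_x Q(v,\varphi_x) + R(v,\varphi_t,\varphi_x,\varphi_{xx})$, where $Q$ is quadratic in $(v,\varphi_x)$ and $R$ collects further quadratic contributions and terms of the form $\varphi_{xx}\cdot(\text{bounded})$. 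Duhamel's principle then gives $v(x,t) - \bar u'(x)\varphi(x,t) = \int G(x,t;y)v_0(y)\,dy + \int_0^t\!\int G(x,t-s;y)\,N(y,s)\,dy\,ds$, where $N := \partial_y Q + R$ is schematically quadratic in $(v,\varphi_t,\varphi_x,\varphi_{xx})$.

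Second, following \cite{JZ1}, I would choose $\varphi$ to cancel the non-decaying errfn contribution appearing in $G$. Splitting $G(x,t;y) = \bar u'(x)\,e(x,t;y) + \widetilde G(x,t;y)$, where $e$ packages the errfn terms isolated in Theorem \ref{main theorem1} and $\widetilde G$ contains the Gaussian remainders (each decaying at least like $t^{-1/2}$), I would define $\varphi(x,t) := -\int e(x,t;y)v_0(y)\,dy - \int_0^t\!\int e(x,t-s;y)\,N(y,s)\,dy\,ds$. With this choice $\bar u'(x)\varphi$ exactly cancels $\bar u'(x)\int e\cdot v_0$ and the analogous nonlinear term, leaving $v(x,t) = \int \widetilde G(x,t;y)v_0(y)\,dy + \int_0^t\!\int \widetilde G(x,t-s;y)\,N(y,s)\,dy\,ds$, so that the linear evolution of $v$ is governed only by the $t^{-1/2}$-decaying part of $G$. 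Differentiating the defining integral of $\varphi$ yields analogous formulas for $\varphi_t$ and $\varphi_x$ in which the time and space derivatives of errfn produce Gaussian factors that may be bounded by the $G_y$-type estimate of Theorem \ref{main theorem1}.

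Third I would close the estimate by a continuous-induction argument on the norm $\|(v,\varphi)\|_t := \sup_{0\leq s\leq t,\,y\in\RR}\Bigl(\frac{|v(y,s)|}{\zeta(y,s)} + \frac{(1+s)^{1/2}|\varphi_t(y,s)| + |\varphi_x(y,s)|}{\zeta(y,s)}\Bigr)$, where $\zeta := \theta + \psi_1 + \psi_2$. Assuming $\|(v,\varphi)\|_t \leq CE_0$ with $E_0$ small, the Duhamel formulas above are controlled by $\zeta(x,t)$ using (i) the decay $|v_0(y)| \leq E_0(1+|y|)^{-3/2}$, (ii) the linear convolution estimate that a Gaussian centered at $y + a_j(t-s)$ integrated against $(1+|y|)^{-3/2}$ is bounded by a multiple of $\zeta(x,t)$, and (iii) the nonlinear counterpart $\int_0^t\!\int(\widetilde G\text{-type kernel})\cdot\zeta(y,s)^2\,dy\,ds \leq C\zeta(x,t)$. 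Together these improve the assumed bound by a constant strictly less than $1$, closing the continuity argument on $[0,\infty)$ and yielding the stated estimate.

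The main technical obstacle is exactly the non-semisimplicity of $\l = 0$ recorded in Lemma \ref{nonsemisimple}: because the errfn term in $G$ does not decay in $L^\infty$, no direct linear estimate for $v$ (without modulation) is possible, in contrast to the reaction-diffusion analysis of \cite{J1}. Beyond introducing $\varphi$, the delicate part is the region-by-region convolution analysis required to reproduce $\zeta$ under the Duhamel map: inside the characteristic cone $[a_1t,a_{n+1}t]$ the algebraic decay $(1+|x|+t)^{-1/2}(1+|x-a_jt|)^{-1/2}$ of $\psi_1$ is sharp because multiple characteristic modes interact, whereas outside the extreme characteristics the faster $(1+|x-a_1t|+\sqrt t)^{-3/2}$ and $(1+|x-a_{n+1}t|+\sqrt t)^{-3/2}$ decay of $\psi_2$ must be recovered from the Gaussian remainders; the $(1+|x|)^{-3/2}$ data decay is precisely the threshold rate in one dimension for which the quadratic source $\zeta^2$ still reproduces $\zeta$ under $\widetilde G$-convolution rather than degrading it under iteration.
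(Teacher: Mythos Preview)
Your overall strategy is the paper's strategy: introduce the modulated perturbation $v$, split $G=\bar u'(x)e+\widetilde G$, define $\varphi$ so as to cancel the $\bar u' e$ piece, and close a continuous-induction argument on a weighted sup-norm built from $\theta+\psi_1+\psi_2$. Two technical points in your sketch, however, do not match what actually makes the argument close and would cause trouble if left as written.

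First, your $e$ is too small. You say ``$e$ packages the errfn terms'' and put everything else into $\widetilde G$. In the paper, $e$ carries \emph{both} leading pieces of Theorem~\ref{main theorem1}: the errfn sum \emph{and} the explicit Gaussian $\bar u'(x)\sum_j \check\beta_{j,n}(0)\check{\tilde\beta}_{j,n}(0)\tilde v_n(0,y)(4\pi b_jt)^{-1/2}e^{-|x-y-a_jt|^2/(4b_jt)}$. The reason is the $y$-derivative: after integration by parts on $\partial_yQ$ you need $|\widetilde G_y|\le Ct^{-1}\sum_j e^{-|x-y-a_jt|^2/Mt}$. If $e$ contains only the errfn part, then $\widetilde G$ still contains the $t^{-1/2}$ Gaussian carrying the nonconstant factor $\tilde v_n(0,y)$, and $\partial_y$ of that factor leaves a $t^{-1/2}$ contribution in $\widetilde G_y$ (this is exactly the $\tilde v_n'(0,y)$ term visible in the $G_y$ formula of Theorem~\ref{main theorem1}). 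With only $t^{-1/2}$ decay on $\widetilde G_y$, the convolution $\int_0^t\!\int |\widetilde G_y|\,(\theta+\psi_1+\psi_2)^2$ does not reproduce $\theta+\psi_1+\psi_2$, and the iteration does not close.

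Second, your bootstrap norm omits $\varphi_{xx}$, and your proposal does not invoke any $H^2$ control on $v$. In the paper the nonlinear source (their $R$) contains both $\varphi_{xx}$ and a factor $v_x$; to bound $|(Q,R,S)|\le C|(v,\varphi_t,\varphi_x,\varphi_{xx})|^2$ pointwise one needs $|v_x|_{L^\infty}$ uniformly bounded, and this is obtained not from the pointwise argument itself but by importing the $L^p/H^2$ stability result of \cite{JZ1} (stated in the paper as Theorem~\ref{nonlinear stability}), which in turn requires the additional hypothesis $|v_0|_{H^2}\le E_0$. Accordingly the paper's iteration quantity is $\zeta(t)=\sup_{s\le t,\,x}|(v,\varphi_t,\varphi_x,\varphi_{xx})|(\theta+\psi_1+\psi_2)^{-1}$, and the conclusion is the quadratic inequality $\zeta(t)\le C(E_0+\zeta(t)^2)$ rather than a strict contraction. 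With these two adjustments your outline coincides with the paper's proof.
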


%%%%%%%%%%%%%%%%%%%%%%%%%%%%%%%%%%%%%%%%%%%%%%%%
%%%%%%%%%%%%%%%%%%%%%%%%%%%%%%%%%%%%%%%%%%%%%%%%

%%%%%%%%%%%%%%%%%%%%%%%%%%%%%%%%%%%%%%%%%%%%%%%%
%%%%%%%%%%%%%%%%%%%%%%%%%%%%%%%%%%%%%%%%%%%%%%%%

\subsection{Discussion and open problems}

%%%%%%%%%%%%%%%%%%%%%%%%%%%%%%%%%%%%%%%%%%%%%%%%
%%%%%%%%%%%%%%%%%%%%%%%%%%%%%%%%%%%%%%%%%%%%%%%%

$L^p$ bounds  on the Green function of $L$ and $L^p$ stability have been obtained by Johnson and Zumbrun. We emphasize again that it is the pointwise description that is the main new aspect here. Pointwise Green function bounds for systems of viscous conservation laws have been obtained by Oh and Zumbrun(\cite{OZ1}) previously. However, this analysis was only for the nongeneric case in the conservation laws setting for in somewhat less detail which $\l=0$ is a semisimple eigenvalue of $L_0$. As mentioned in Remark \ref{remark for spectral structure}, if $\l=0$ is a semisimple eigenvalue of $L_0$, we can easily define the existence of the right and left eigenfuctions $q(\xi,x)$ and $\tilde q(\xi,x)$ of $L_{\xi}$ analytically  for sufficiently small $\xi$. In this case, we have the same Green function bounds as in the previous work for the reaction-diffusion case, only with several modes of heat kernels. However, for the generic case,  noting first that  the right and left eigenfuctions $q(\xi,x)$ and $\tilde q(\xi,x)$ of $L_{\xi}$ have more complicated descriptions as in Lemma \ref{nonsemisimple}, the Green function decays more slowly (theorem \ref{main theorem1}) than in the previous work, \cite{J1}.

Similarly to the previous work for reaction diffusion-systems, the key to the pointwise nonlinear analysis is to subtract out the first two terms of $G$ in Theorem \ref{main theorem1} from the integral representation of modulated perturbations  $v(x,t):=\tilde u(x-\vp(x,t),t)-\bar u(x)$ by defining $\vp(x,t)$ appropriately with an assumption $\vp(x,0)=0$, that is,  localized modulations (section \ref{iteration scheme}). However, the pointwise nonlinear analysis with nonlocalized modulations $h(x):=\vp(x,0)$, $|\partial_x h(x)|$, treated at $L^q \rightarrow L^p$ level in \cite{JNRZ1, JNRZ2, JNRZ3}, is an interesting direction for further investigation for both systems of reaction-diffusion and conservation laws. The main new ingredient compared to the localized case will be a detailed estimation of $e^{Lt}(\bar u' h_0)$ in terms of $|\partial_x h_0|$. With further effort, we could also give a description of behavior for both locallized and nonlocalized parts.

In our way of estimating nonlinear interactions, we follow the strategy of \cite{HZ}. Full details of the scattering part of the \cite{HZ} argument given in the more restricted situation considered here help clarify that argument as well. With further effort, one should be able to derive a more detailed description in terms of "nonlinear diffusion waves" as in \cite{HRZ}, by combining our argument with that of \cite{JNRZ3}.

%%%%%%%%%%%%%%%%%%%%%%%%%%%%%%%%%%%%%%%%%%%%%%%%%%
%%%%%%%%%%%%%%%%%%%%%%%%%%%%%%%%%%%%%%%%%%%%%%%

\section{The resolvent kernel} \label{resolvent kernel}

%%%%%%%%%%%%%%%%%%%%%%%%%%%%%%%%%%%%%%%%%%%%%%%%%%
%%%%%%%%%%%%%%%%%%%%%%%%%%%%%%%%%%%%%%%%%%%%%%%

In this section, we develop a formula for the resolvent kernel on the whole line and the periodic boundary conditions on $[0,1]$ using solution operators and projections.   Here,  `` whole-line "  means the kernel of periodic-coefficient operator considered as acting on $L^2(\mathbb R)$. For $\l$  in the resolvent set of $L$, we denote by $G_\l(x,y)$ the resolvent kernel defined by
\be
(L-\l I)G_\l(\cdot,y):=\delta_y \cdot I,
\notag
\ee
$\delta_y$ denoting the Dirac delta distribution centered at $y$.

We already constructed the formula for $\ds \bp G_{\xi,\lambda}\\ \partial_x G_{\xi,\lambda}\ep (x,y)$ in the previous paper, \cite{J1}. Here, we construct the formula of $\bp G_{\xi, \l}& \partial_y G_{\xi,\l} \ep(x,y)$. By [ZH](Lemma 4.3), we need to consider the adjoint operator $L_\xi^*$ of \eqref{firstorder sysem}, and $z=G_{\xi,\l}(x,\cdot)$  satisfies
\be\label{ad eig}
z\l = zL_\xi^* =z''-(zA_\xi)'-zC_\xi,
\ee
where $A_\xi = -2i\xi I-df(\bar u) \in \CC^{n\times n}$ and $C_\xi(x) = -df(\bar u)_x-i\xi df(\bar u)+\xi^2I \in \CC^{n\times n}$.
Rewriting \eqref{ad eig} as a first-order system
\be\label{ad firstorder sysem}
Z'=Z\tilde\mA_\xi (x,\lambda),
\ee
where
\be
U=\bp z & z'\ep, \quad \tilde\mA_\xi =\bp 0 & \l I-i\xi df(\bar u)+\xi^2I \\ I  &  2i\xi I+df(\bar u) \ep,
\notag
\ee
similarly, denote by $\tilde{\mathcal{F}}_\xi ^{x\to y}\in \CC^{2n\times 2n}$ the solution operator of \eqref{ad firstorder sysem}, defined by $\tilde{\mathcal{F}}_\xi ^{x\to x}=I$, $\partial_y
\tilde{\mathcal{F}}_\xi = \tilde{\mathcal{F}}_\xi\tilde\mA_\xi$.

In subsection \ref{example}, we give a simple example for construction of  $\bp G_{\xi, \l}& \partial_y G_{\xi,\l} \ep(x,y)$.

\subsection{The whole line case}

We constructed $\ds \bp G_{\xi,\lambda}\\ \partial_x G_{\xi,\lambda}\ep (x,y)$ in the previous paper \cite{J1}. We state here again with $\bp G_{\xi, \l}& \partial_y G_{\xi,\l} \ep(x,y)$. 

\bl\label{wholelem}
For all $\xi \in [-\pi,\pi]$, the whole line kernel satisfies
\be
\bp \mathcal G_{\xi,\lambda}\\ \partial_x\mathcal G_{\xi,\lambda}\ep (x,y)=
\begin{cases}
\mathcal F_\xi^{y\to x} \Pi_\xi^+(y)\bp 0\\I\ep, & x>y,\\
-\mathcal F_\xi^{y\to x} \Pi_\xi^-(y)\bp 0\\I\ep, & x < y,\\
\end{cases}
\notag
\ee

\be
\bp \mathcal G_{\xi,\lambda} & \partial_y \mathcal G_{\xi,\lambda}\ep (x,y)=
\begin{cases}
-\bp 0 & I \ep \tilde{\Pi}_{\xi}^- (x) \tilde{\mathcal F}_\xi^{x \to y} , & x>y, \\
\bp 0 & I \ep \tilde{\Pi}_{\xi}^+ (x) \tilde{\mathcal F}_\xi^{x \to y}, & x>y,
\end{cases}
\notag
\ee

where $\Pi_\xi^\pm$ and $\tilde{\Pi}_\xi^\pm$are projections onto the manifolds of solutions decaying as $x\to \pm\infty$ and $y\to \pm\infty$, respectively.
\el

\begin{proof}
We must only check the jump condition $ \Big[ \bp \mathcal G_{\xi,\lambda}\\ \mathcal G_{\xi,\lambda}'\ep \Big]\Big|_y=\bp 0 \\ I\ep$ and $\Big[\bp \mathcal G_{\xi,\lambda} & \partial_y \mathcal G_{\xi,\lambda}\ep \Big]|_y=\bp 0 & -I \ep $ which follows from $\mathcal{F}_\xi^{y\to y}=I=\tilde{\mathcal{F}}_\xi^{y\to y}$ and $\Pi_\xi^++\Pi_\xi^-=I=\tilde{\Pi}_\xi^++\tilde{\Pi}_\xi^-$, and the fact that
$\mathcal G_{\xi,\l}(\cdot,y)$ and $\mathcal G_{\xi,\l}(x,\cdot)$ decay at $\pm\infty$, which is clear by inspection.
\end{proof}

\subsection{The periodic case}

\bl\label{perlem}
For all $\xi \in [-\pi,\pi]$, the periodic kernel satisfies
\be
\bp G_{\xi,\lambda}\\ \partial_xG_{\xi,\lambda}\ep(x,y)=
\begin{cases}
\mathcal{F}_\xi ^{y\to x} M_\xi ^+ (y)\bp 0\\I\ep, & x>y,\\
-\mathcal{F}_\xi ^{y\to x} M_\xi ^- (y)\bp 0\\I\ep, & x\le y,\\
\end{cases}
\notag
\ee
where $M_\xi ^+ (y)=(I-\mathcal{F}_\xi ^{y\to y+1})^{-1}$ and
$M_\xi ^- (y)=-(I-\mathcal{F}_\xi ^{y\to y+1})^{-1}\mathcal{F}_\xi ^{y\to y+1}$, \\
\be
\bp G_{\xi,\lambda} & \partial_yG_{\xi,\lambda}\ep(x,y)=
\begin{cases}
-\bp 0 & I \ep \tilde{M}_\xi ^- (x) \tilde{\mathcal{F}}_\xi ^{x\to y} , & x>y,\\
\bp 0 & I \ep  \tilde{M}_\xi ^+ (x) \tilde{\mathcal{F}}_\xi ^{x\to y}, & x < y,\\
\end{cases}
\notag
\ee
where $\tilde M_\xi ^+ (x)=(I-\tilde{\mathcal{F}}_\xi ^{x\to x+1})^{-1}$ and
$\tilde M_\xi ^- (x)=-\tilde{\mathcal{F}}_\xi ^{x\to x+1}(I-\tilde{\mathcal{F}}_\xi ^{x\to x+1})^{-1}$, \\
\el

\begin{proof}
We must check the jump condition $ \Big[ \bp
G_{\xi,\lambda}\\ \partial_x G_{\xi,\lambda}\ep \Big]\Big|_y=\bp 0 \\ I\ep $ and $[\bp G_{\xi,\lambda} & \partial_yG_{\xi,\lambda} \ep]|_y= \bp 0 & -I \ep $  which follows
from $\mathcal{F}_\xi ^{y\to y}=I$ and $M_\xi ^++M_\xi ^-=I$, and periodicity, $\bp
G_{\xi,\lambda}\\ \partial_xG_{\xi,\lambda}\ep(0,y)= \bp G_{\xi,\lambda}\\ \partial_xG_{\xi,\lambda}\ep(1,y).$
By periodicity of the solution operator, $\mathcal{F}_\xi ^{0\to
y}\mathcal{F}_\xi ^{y\to 1}=\mathcal{F}_\xi ^{1\to y+1}\mathcal{F}_\xi ^{y\to
1}=\mathcal{F}_\xi ^{y\to y+1}$.
By direct computation, we obtain $\mathcal{F}_\xi ^{y\to 1}(I-\mathcal{F}_\xi ^{y\to y+1})^{-1}
 =\mathcal{F}_\xi ^{y\to 0}(I-\mathcal{F}_\xi ^{y\to y+X})^{-1}\mathcal{F}_\xi ^{y\to y+1}$
which gives us $\bp G_{\xi,\lambda}\\ \partial_x G_{\xi,\lambda}\ep(0,y)= \bp
G_{\xi,\lambda}\\ \partial_x G_{\xi,\lambda}\ep(1,y)$. Similarly we argue the periodicity for $\bp G_{\xi, \l}& \partial_y G_{\xi,\l} \ep(x,y)$.
\end{proof}

\subsection{Example}\label{example}

Consider the constant-coefficient scalar case
\be
u_t+au_x=u_{xx}, \quad a>0 \quad \text{constant}.
\notag
\ee
This gives a eigenvalue equation for each $\xi \in [-\pi, \pi]$,
\be
u'' - (a- i2\xi)u' - (\xi^2+ia\xi)u = \l u
\notag
\ee
Rewriting as a first-order system
\be
U' = \mA_\xi (x,\l)U,
\notag
\ee
where
\be
U=\bp u \\ u' \ep, \quad \mA_\xi = \bp 0 & 1 \\ \l+\xi^2 + ia\xi  &  a-i2\xi \ep.
\notag
\ee

By a direct calculation we can find two eigenvalues of $\mA_\xi$,
\be
\mu_\pm = \ds \frac{a-i2\xi \pm \sqrt{a^2+4 \l}}{2},
\notag
\ee
which are solutions of the characteristic equation
\be
\mu^2-(a-i2\xi)\mu-\l-\xi^2-ia\xi = 0.
\notag
\ee
Then for $Re \l >0$,  we can assume $Re\mu_- < 0$ and $Re\mu_+ > 0$.

To find $\bp G_{\xi, \l}& \partial_y G_{\xi,\l} \ep(x,y)$, let's consider the equation for each $\xi \in [-\pi,\pi]$,
\be \label{ex adjoint}
z''+(a-i2\xi)z'-(\xi^2+ia\xi)z=\l z
\ee
Rewriting as a first-order system
\be \label{ex adjoint fos}
Z'=Z \tilde{\mA}_\xi (x,\l),
\ee
where
\be
Z=\bp z & z' \ep, \quad \tilde{\mA}_\xi =\ds \bp 0 & \l+\xi^2 + ia\xi \\   1&  -a+i2\xi \ep.
\notag
\ee
Then the matrix $z=G_{\xi,\l}(x,\cdot)$ satisfies \eqref{ex adjoint} and $\bp G_{\xi, \l}(x,\cdot) & \partial_y G_{\xi,\l}(x,\cdot) \ep$ satisfies \eqref{ex adjoint fos}. It is easily see that there are two eigenvalues of  $\tilde{\mA}_\xi (x,\l)$
\be
\tilde{\mu}_\pm = \ds \frac{-a+i2\xi \pm \sqrt{a^2+4 \l}}{2}=-\mu_\mp.
\notag
\ee
By the same calculation, we find $G_{\xi,\l}(x,y)$ and $\partial_y G_{\xi,\l}(x,y)$,
\be
G_{\xi,\l}(x,y)=
\begin{cases}
\frac{e^{\tilde\mu_-(y-x+1)}}{(\tilde\mu_--\tilde\mu_+)(1-e^{\tilde\mu_-})}
-\frac{e^{\tilde\mu_+(y-x+1)}}{(\tilde\mu_--\tilde\mu_+)(1-e^{\tilde\mu_+})}, & x>y,\\\\
\frac{e^{\tilde\mu_-(y-x)}}{(\tilde\mu_--\tilde\mu_+)(1-e^{\tilde\mu_-})}
-\frac{e^{\tilde\mu_+(y-x)}}{(\tilde\mu_--\tilde\mu_+)(1-e^{\tilde\mu_+})},& x<y,
\end{cases}
\notag
\ee
and
\be
\partial_yG_{\xi,\l}(x,y)=
\begin{cases}
\frac{\tilde\mu_-e^{\tilde\mu_-(y-x+1)}}{(\tilde\mu_--\tilde\mu_+)(1-e^{\tilde\mu_-})}
-\frac{\tilde\mu_+e^{\tilde\mu_+(y-x+1)}}{(\tilde\mu_--\tilde\mu_+)(1-e^{\tilde\mu_+})}, & x>y,\\\\
\frac{\tilde\mu_-e^{\tilde\mu_-(y-x)}}{(\tilde\mu_--\tilde\mu_+)(1-e^{\tilde\mu_-})}
-\frac{\tilde\mu_+e^{\tilde\mu_+(y-x)}}{(\tilde\mu_--\tilde\mu_+)(1-e^{\tilde\mu_+})},& x<y.
\end{cases}
\notag
\ee
The solution operator of \eqref{ex adjoint fos} is
\be
\tilde{\mathcal F}_\xi^{x \to y}=e^{\tilde{\mA}_\xi(y-x)}=\tilde\Pi^+_\xi(x)e^{\tilde\mu_-(y-x)}+\tilde\Pi^-_\xi(x)e^{\tilde\mu_+(y-x)},
\notag
\ee
where
\be
\tilde\Pi^+_\xi
= \bp \frac{-\tilde\mu_+}{\tilde\mu_--\tilde\mu_+} &
\frac{-\tilde\mu_-\tilde\mu_+}{\tilde\mu_--\tilde\mu_+} \\
\frac{1}{\tilde\mu_--\tilde\mu_+} &
\frac{\tilde\mu_-}{\tilde\mu_--\tilde\mu_+} \ep
\quad \text{and} \quad
\tilde\Pi^-_\xi
= \bp \frac{\tilde\mu_-}{\tilde\mu_--\tilde\mu_+} &
\frac{\tilde\mu_-\tilde\mu_+}{\tilde\mu_--\tilde\mu_+} \\
\frac{-1}{\tilde\mu_--\tilde\mu_+} &
\frac{-\tilde\mu_+}{\tilde\mu_--\tilde\mu_+} \ep,
\notag
\ee
which satisfies
\be
\bp G_{\xi,\lambda} & \partial_yG_{\xi,\lambda}\ep(x,y)=
\begin{cases}
-\bp 0 & I \ep \tilde{M}_\xi ^- (x) \tilde{\mathcal{F}}_\xi ^{x\to y} , & x>y,\\
\bp 0 & I \ep  \tilde{M}_\xi ^+ (x) \tilde{\mathcal{F}}_\xi ^{x\to y}, & x < y,\\
\end{cases}
\notag
\ee
where $\tilde M_\xi ^+ (x)=(I-\tilde{\mathcal{F}}_\xi ^{x\to x+1})^{-1}$ and
$\tilde M_\xi ^- (x)=-\tilde{\mathcal{F}}_\xi ^{x\to x+1}(I-\tilde{\mathcal{F}}_\xi ^{x\to x+1})^{-1}$. \\

%%%%%%%%%%%%%%%%%%%%%%%%%%%%%%%%%%%%%%%%%%%%%%%%

%\subsection{Laplacian case}

%%%%%%%%%%%%%%%%%%%%%%%%%%%%%%%%%%%%%%%%%%%%%%%%%%

%For the Laplacian case $A=C=0$, the solution operator is \be
%\label{} \mathcal{F}^{y\to x}= e^{\mA(x-y)}=e^{-\sqrt{\l}(x-y)}\Pi_+
%+ e^{\sqrt{\l}(x-y)}\Pi_-, \ee where $\Pi_\pm$ are projections onto
%the manifolds of solutions decaying as $x\to \pm\infty$. For the
%Laplacian case, it is easy to get \be \label{} \displaystyle \Pi_\pm
%=\frac{1}{2} \bp I & \mp \sqrt{\l}^{-1}I \\
%\mp\sqrt{\l}I & I \ep. \ee

%\bl\label{Lapwholelem} The whole line kernel for the Laplacian case
%$A=C=0$ satisfies \ba\label{}  \bp G_\lambda\\G_\lambda'\ep=
%\begin{cases}
%\mathcal{F}^{y\to x} \Pi_+\bp 0\\I\ep, & x>y,\\
%-\mathcal{F}^{y\to x} \Pi_-\bp 0\\I\ep, & x\le y,\\
%\end{cases} \\ =
%\begin{cases}
%e^{-\sqrt{\l}(x-y)}\Pi_+\bp 0\\I\ep, & x>y,\\
%e^{\sqrt{\l}(x-y)}\Pi_-\bp 0\\I\ep, & x\le y,\\
%\end{cases} \ea
%\el

%%%%%%%%%%%%%%%%%%%%%%%%%%%%%%%%%%%%%%%%%

\section{Pointwise bounds on $G_{\xi,\l}(x,y)$ and $\partial_y G_{\xi,\l}(x,y)$ for $|\l|>R$, $R$ sufficiently large } \label{pointwse bounds of resolvent kernel}

%%%%%%%%%%%%%%%%%%%%%%%%%%%%%%%%%%%%%%%%%%%
In this section, we derive pointwise bounds on $G_{\xi,\l}(x,y)$ and $\partial_y G_{\xi,\l}(x,y)$. For the case $|\l| \leq R$, since $G_{\xi,\l}(x,y)$ is analytic in $\l$, we have
\be \label{compact resolvent}
|G_{\xi,\l}(x,y)|, |\partial_y G_{\xi,\l}(x,y)| \leq Ce^{-\theta |x-y|},
\ee
for all $x$,$y$, where $C$, $\theta >0$ are constants(See \cite{OZ1} and \cite{ZH}). For $|\l|>R$, $R$ sufficiently large, we use the direct construction of $G_{\xi,\l}(x,y)$ in Section \ref{resolvent kernel}. However, the argument for this part is exactly same as our previous work \cite{J1} for reaction-diffusion waves. Thus, we just state the pointwise bounds on $G_{\xi,\l}(x,y)$ and $\partial_y G_{\xi,\l}(x,y)$ for $|\l|>R$, $R$ sufficiently large without proof.
\begin{proposition}\label{proposition from J1}(\cite{J1})
For any $|\xi| \leq \pi$ and any $0 \leq x,  y \leq 1$,
\be
\begin{split}
|G_{\xi,\l}(x,y)| & \leq C|\l^{-1/2}|(e^{-\beta^{-1/2}|\l^{1/2}||x-y|}+e^{-\beta^{-1/2}|\l^{1/2}|(1-|x-y|)}) \\
|(\partial/\partial_x)G_{\xi,\l}(x,y)| & \leq C(e^{-\beta^{-1/2}|\l^{1/2}||x-y|}+e^{-\beta^{-1/2}|\l^{1/2}|(1-|x-y|)})
\end{split}
\notag
\ee
provided $|\l|$ is sufficiently large and $C>0$, that is, $|G_{\xi,\l}|$ is uniformly bounded as $|\l| \to \infty$. Here, $\b^{-1/2} \sim \ds \min_{\l \in \Omega \cap \{ |\l| >R\}} Re \sqrt{\l/|\l|}$. 
\end{proposition}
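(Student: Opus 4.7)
The plan is to combine the explicit representation of $G_{\xi,\lambda}$ from Lemma~\ref{perlem} with a WKB-style asymptotic analysis of the solution operator $\mathcal{F}_\xi^{y\to x}$ in the regime $|\lambda|\to\infty$. To leading order, the coefficient matrix
\be
\mA_\xi(x,\lambda)=\bp 0 & I\\ \lambda I+C_\xi(x) & A_\xi(x)\ep
\notag
\ee
is dominated by its constant-coefficient principal part $\bp 0 & I\\ \lambda I & 0\ep$, whose eigenvalues are $\pm\mu(\lambda):=\pm\sqrt{\lambda}\,(1+O(|\lambda|^{-1}))$ with $\mathrm{Re}\,\mu(\lambda)\sim\beta^{-1/2}|\lambda|^{1/2}$. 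Since $A_\xi$ and $C_\xi$ are bounded uniformly in $\xi\in[-\pi,\pi]$ and $x\in[0,1]$, a near-identity conjugation of size $O(|\lambda|^{-1/2})$ block-diagonalizes $\mA_\xi$ into two $n\times n$ blocks whose spectra lie in small disks about $\pm\mu(\lambda)$.

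In the conjugated variables a standard tracking argument (Gronwall on each block, with the growing spectral gap $2\mathrm{Re}\,\mu(\lambda)$ killing cross-block coupling) gives the fundamental-solution estimate $\|\mathcal{F}_\xi^{y\to x}\Pi_\pm\|\leq C\,e^{\pm\mathrm{Re}\,\mu(\lambda)(x-y)}$ for the two spectral projectors $\Pi_\pm$, uniformly in $\xi$. Undoing the conjugation and reading off the scalar $G$-component from the $2n\times 2n$ solution matrix introduces an extra factor $|\mu(\lambda)|^{-1}\sim|\lambda^{-1/2}|$, because in each leading eigenvector $(1,\pm\mu)^T$ the $u$-component is $\mu^{-1}$ times the $u'$-component that is being fed in through the jump vector $\bp 0\\ I\ep$ from Lemma~\ref{perlem}; no such factor appears in the estimate on $\partial_x G$.

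The projectors $M_\xi^\pm(y)$ are handled with the same block decomposition. The monodromy $\mathcal{F}_\xi^{y\to y+1}$ has norm $\sim e^{+\mathrm{Re}\,\mu(\lambda)}$ on the unstable block and $\sim e^{-\mathrm{Re}\,\mu(\lambda)}$ on the stable block, so $I-\mathcal{F}_\xi^{y\to y+1}$ is invertible for $|\lambda|$ large, with inverse equal to $I+O(e^{-\mathrm{Re}\,\mu(\lambda)})$ on the stable block and to $-(\mathcal{F}_\xi^{y\to y+1})^{-1}+O(e^{-2\mathrm{Re}\,\mu(\lambda)})$ on the unstable block. Assembling $\mathcal{F}_\xi^{y\to x}M_\xi^+(y)\bp 0\\ I\ep$ for $x>y$, the stable channel contributes the direct decay $e^{-\mathrm{Re}\,\mu(\lambda)(x-y)}$, while the unstable channel is first damped by the wrap-around factor $e^{-\mathrm{Re}\,\mu(\lambda)}$ and then propagated forward by at most $e^{+\mathrm{Re}\,\mu(\lambda)(x-y)}$, producing exactly the second summand $e^{-\mathrm{Re}\,\mu(\lambda)(1-(x-y))}$. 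The symmetric case $x\leq y$ and the analogous bound on $\partial_x G_{\xi,\lambda}$ follow by identical bookkeeping.

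The main technical obstacle is to carry out the block-diagonalization and the tracking estimates \emph{uniformly in $\xi\in[-\pi,\pi]$}, with error terms that are strictly subdominant to the principal exponentials $e^{\pm\mathrm{Re}\,\mu(\lambda)(x-y)}$. This amounts to a quantitative gap-lemma computation, which succeeds precisely because the spectral gap $2\mathrm{Re}\,\mu(\lambda)$ grows without bound while the perturbations $A_\xi, C_\xi$ stay uniformly bounded. The whole argument is otherwise identical to the one carried out in \cite{J1} for the reaction-diffusion case, which is why only the statement is recorded here.
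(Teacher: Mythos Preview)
Your sketch is correct and follows exactly the approach of \cite{J1} that the paper invokes: block-diagonalize $\mA_\xi$ for large $|\lambda|$ into stable/unstable pieces with eigenvalues near $\pm\sqrt{\lambda}$, track the solution operator through Lemma~\ref{perlem}, and read off the $|\lambda|^{-1/2}$ factor from the eigenvector normalization and the wrap-around term from the monodromy in $M_\xi^\pm$. The paper itself gives no independent argument here, so there is nothing further to compare.
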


%%%%%%%%%%%%%%%%%%%%%%%%%%%%%%%%%%%%%%%%%%%%%%%%%%%%%%%%%%%%%%%%%%%%%%%%%%%%%%%%%%%%%%%%%%%

\section{Pointwise bounds on $G$} \label{pointwise bounds of green function}

%%%%%%%%%%%%%%%%%%%%%%%%%%%%%%%%%%%%%%%%%%%%%%%%%%%%%%%%%%%%%%%%%%%%%%%%%%%%%%%%%%%%%%%%%%%

We now prove Theorem \ref{main theorem1} which is pointwise bounds on the Green function $G(x,t;y)$ of the linear operator $L$ in \eqref{sp}. Let's define the sector
\be
\Omega : = \{ \l : Re(\l) \geq \theta_1 - \theta_2|Im(\l)| \},
\notag
\ee
where $\theta_1$, $\theta_2 >0$ are small constants.

We first state the the standard spectral resolution(inver Laplace transform) formula(see, \cite{ZH, OZ1}). We use this formula to prove Theorem \ref{main theorem1}. This is the reason we constructed the resolvent kernels and their bounds in the  previous sections. 

\begin{proposition}\label{proposition from ZH}(\cite{ZH})
The parabolic operator $\partial_t - L$ has a Green function $G(x,t;y)$ for each fixed $y$ and $(x,t) \neq (y,0)$ given by
\be \label{proposition}
G(x,t;y)=\frac{1}{2\pi i}\int_{\Gamma:=\partial(\Omega \backslash B(0,R))}e^{\l t}G_\l(x,y) d\l
\ee
for $R >0$ sufficiently large and $\theta_1$, $\theta_2 > 0$ sufficiently small. 
\end{proposition}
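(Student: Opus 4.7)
The plan is to derive \eqref{proposition} from the standard Hille-Yosida-Dunford contour representation for analytic semigroups. The first step would be to verify that $L$ is sectorial on $L^2(\RR)$, with $\Omega \setminus B(0,R)$ contained in the resolvent set of $L$ and with the resolvent norm bounded like $C|\l|^{-1}$ on this region. This combines three ingredients: the spectral localization $\sigma(L) \subset \{Re\,\l \leq 0\}$ coming from (D1); the exponentially decaying finite-$|\l|$ kernel bound \eqref{compact resolvent} off the spectrum; and the large-$|\l|$ kernel bounds of Proposition \ref{proposition from J1}, transferred from the Bloch fibers $G_{\xi,\l}$ to the full-line resolvent $G_\l$ via the inverse Bloch-Fourier representation \eqref{inverse BF}. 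For $\theta_1,\theta_2$ small and $R$ large enough, these force $\Gamma$ to avoid $\sigma(L)$ entirely while still enclosing all of the spectrum on its interior side.

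With sectoriality in hand, I would begin from the usual Bromwich inversion formula
\[
e^{Lt} = \frac{1}{2\pi i}\int_{\eta - i\infty}^{\eta + i\infty} e^{\l t}(\l I - L)^{-1}\,d\l,
\]
valid strongly on $L^2(\RR)$ for any real $\eta$ lying strictly to the right of the spectral abscissa of $L$. Cauchy's theorem, applied in the resolvent set, then lets me deform the vertical line leftward onto $\Gamma = \partial(\Omega \setminus B(0,R))$. The deformation is legal because the strip between the two contours is disjoint from $\sigma(L)$ by the previous step; convergence at infinity is controlled because along the slanted rays $Re\,\l = \theta_1 - \theta_2|Im\,\l|$ the factor $|e^{\l t}| = e^{Re(\l)t}$ decays exponentially in $|Im\,\l|$ for each fixed $t > 0$, and this dominates the $|\l|^{-1/2}$-type growth of the resolvent kernel from Proposition \ref{proposition from J1}. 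Reading off the Schwartz kernel of both sides then yields the pointwise identity \eqref{proposition} for any $(x,t;y)$ with $(x,t)\neq (y,0)$.

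The main technical obstacle is passing from the fiberwise bounds on $G_{\xi,\l}$ in Proposition \ref{proposition from J1} to a uniform-in-$\l$ bound on the whole-line kernel $G_\l(x,y)$, since only the latter enters \eqref{proposition}. Using the periodic-Bloch identity $G_\l(x,y) = (2\pi)^{-1}\int_{-\pi}^\pi e^{i\xi(x-y)} G_{\xi,\l}(x,y)\,d\xi$, this reduces to checking integrability in $\xi$ together with uniformity in $\l \in \Gamma \cap \{|\l|>R\}$; the remaining deformation argument is then a periodic-coefficient analog of the constant-coefficient Laplace inversion carried out in \cite{ZH}, and most of the estimates can be imported from there with only notational changes.
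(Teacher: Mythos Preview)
The paper does not prove this proposition at all; it is quoted verbatim from \cite{ZH} and used as a black box (note the citation in the proposition header and the absence of any proof environment following it). Your outline---sectorial resolvent bounds plus Bromwich inversion plus contour deformation into the sector---is exactly the standard argument that \cite{ZH} carries out, so in substance your proposal matches the source the paper defers to.

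One small remark on your last paragraph: in the paper's organization, the whole-line bound $|G_\l(x,y)| \leq C|\l|^{-1/2}e^{-\beta^{-1/2}|\l|^{1/2}|x-y|}$ used to justify the deformation is itself lifted directly from \cite{ZH} (see Case~(i) of the proof of Theorem~\ref{main theorem1}), rather than being reconstructed from the periodic kernels $G_{\xi,\l}$ via the Bloch formula $G_\l(x,y)=(2\pi)^{-1}\int_{-\pi}^\pi e^{i\xi(x-y)}G_{\xi,\l}(x,y)\,d\xi$ as you propose. Your route would also work and is arguably more self-contained given the rest of the paper, but it is not what the paper actually does.
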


%\noindent
%The standard spectral resolution formula \eqref{proposition}, together with high frequency periodic resolvent bounds given previous section, will be the starting point for the proof of Theorem \ref{main theorem1}.
\begin{proof}[\textbf{Proof of Theorem \ref{main theorem1}.}]
\textbf{Case(i). $\ds\frac{|x-y|}{t}$ large.} We first consider the case that $|x-y|/t \geq S$, $S$ sufficiently large. For this case, as I mentioned in the previous paper \cite{J1}, it is
%CHANGED
hard to estimate $G$ through $|[G_\xi(x,t;y)]|$ directly, because of the problem of aliasing (\cite{J1}).
Instead we estimate $|G_\l(x,y)|$ first and we estimate $|G(x,t;y)|$ by \eqref{proposition}. This is treated by exactly the same argument as in \cite{ZH}. By \cite{ZH}, notice that
%hard to estimate $|[G_\xi(x,t;y)]|$ directly because of the problem of aliasing. Instead we estimate $|G_\l(x,y)|$ first and we estimate $|G(x,t;y)|$ by \eqref{proposition}. This is treated by exactly the same argument as in \cite{ZH}. By \cite{ZH}, notice that
\be
| G_\l (x,y) | \leq C |\l^{-1/2}|e^{-\b^{-1/2}|\l^{1/2}||x-y|},
\notag
\ee
for all $\l \in \Omega \backslash B(0,R)$ and $R>0$ sufficiently large,
and here, $\b^{-1/2} \sim \ds \min_{\l \in \Omega \cap \{ |\l| >R\}} Re \sqrt{\l/|\l|}$.

Finally we have
\be
|G(x,t;y)| \leq  C\Big| \int_{\Gamma}e^{\l t}G_\l(x,y)d\l \Big| \leq  t^{-\frac{1}{2}}e^{-\eta t}e^{-\frac{|x-y|^2}{8\b t}} \leq  t^{-\frac{1}{2}}e^{-\eta t}e^{-\frac{|x-y-a_jt|^2}{Mt}},
\notag
\ee
for all $a_j$, and for some $\eta >0$ and $M>0$ sufficiently large.(See \cite{ZH} for details) Here, the last inequality is from that $\ds\frac{|x-y|}{t}$ large.

%\begin{remark} \label{remark 4.1}
%\be \label{4.1}
%\begin{split}
%G_\l (x,y)
%& = \int_{-\pi}^{\pi}e^{i\xi x}e^{(L_\xi-\l I)t}\hat {\delta_y}(\xi,x) d\xi \\
%& = \int_{-\pi}^{\pi}e^{i\xi x}e^{(L_\xi-\l I)t}e^{-i\xi y}[\delta_y(x)] d\xi \\
%& = \int_{-\pi}^{\pi}e^{i\xi(x-y)}[G_{\xi,\l}(x,y)] d\xi \\
%& = \int_{-\pi}^{\pi}e^{i\xi(x-y)}\sum_{j \in \ZZ}\mathcal G_{\xi,\l}(x,y+j) d\xi \\
%& =  \int_{-\pi}^{\pi}\sum_{j \in \ZZ} e^{i\xi(x-y)}\mathcal G_{\xi,\l}(x,y+j) d\xi \\
%& = \int_{-\pi}^{\pi}\sum_{j \in \ZZ} e^{i\xi(x-y)}e^{-i\xi(x-y-j)}\mathcal G_\l(x,y+j) d\xi \\
%& =\int_{-\pi}^{\pi}\sum_{j \in \ZZ}e^{i\xi j} \mathcal G_\l(x,y+j)  d\xi \\
%& = \mathcal G_\l(x,y),
%\end{split}
%\ee
%here, the brackets $[\cdot]$ denotes the periodic extensions in x and y and the last equality is because of $\ds %\int_{-\pi}^{\pi} e^{-i\xi j} d\xi = 0$ for $j \neq 0$.
%\end{remark}

\bigskip

\textbf{Case (ii)}.\textbf{ $\ds\frac{|x-y|}{t} < S$ bounded.}
% I used [JZ2] for this first passage.
To begin, notice that by standard spectral perturbation theory [K], the total eigenprojection $P(\xi)$ onto the eigenspace of $L_\xi$ associated with the eigenvalues $\l(\xi)$  bifurcating from the $(\xi, \l(\xi))=(0,0)$ state is well defined and analytic in $\xi$ for $\xi$ sufficiently small, since the discreteness of the spectrum of $L_\xi$ implies that the eigenvalue $\l(\xi)$ is separated at $\xi=0$ from the remainder of the spectrum of $L_0$. By (D2), there exists an $\eps > 0$ such that $Re \sigma(L_\xi) \leq -\theta |\xi|^2$ for $0<|\xi|<2 \eps$. With this choice of $\eps$, we first introduce a smooth cut off function $\phi(\xi)$ such that
\be
\phi(\xi)=
\begin{cases}
1, & \text{if} \quad |\xi| \leq \varepsilon \\
0, & \text{if} \quad |\xi| \geq 2\varepsilon, \\
\end{cases}
\notag
\ee
where $\eps > 0$ is a sufficiently small parameter. Now from the inverse Bloch-Fourier transform representation, we split the Green function
\be
G(x,t;y)= \frac{1}{2\pi}\int_{-\pi}^{\pi}e^{i \xi x}e^{L_\xi t} \check {\delta_y}(\xi,x) d\xi
\notag
\ee
into its low-frequency part
\be
\mathcal L =  \frac{1}{2\pi}\int_{-\pi}^{\pi}e^{i \xi x}\phi(\xi) P(\xi)e^{L_\xi t} \check{\delta_y}(\xi,x) d\xi
\notag
\ee
and high frequency part
\be
\mathcal H = \frac{1}{2\pi}\int_{-\pi}^{\pi}e^{i \xi x}(1-\phi(\xi) P(\xi))e^{L_\xi t} \check{\delta_y}(\xi,x) d\xi.
\notag
\ee

Let's start by considering the second part $\mathcal H$. The proof of the high frequency part is similar to the previous work \cite{J1}.
Noting first that
\be
\check{\delta_y}(\xi,x)=\ds \sum_{j\in \ZZ}e^{i2\pi jx}\check{\delta_y}(\xi +2\pi j)=\ds \sum_{j\in \ZZ}e^{i2\pi jx}e^{-i(\xi+2\pi j)y}= e^{-i\xi y}\ds \sum_{j\in \ZZ}e^{i2\pi j(x-y)}=e^{-i \xi y}[\delta_y(x)],
\notag
\ee
we have for $|\xi| \geq 2\eps$, $\phi(\xi)=0$ and
\be
\begin{split}
& \int_{2\eps \leq |\xi| \leq \pi}e^{i\xi x}(1-\phi(\xi) P(\xi))e^{L_\xi t} \check{\delta_y}(\xi,x) d\xi \\
& = \int_{2\varepsilon \leq |\xi| \leq \pi }e^{i\xi x}e^{L_\xi t} \check{\delta_y}(\xi,x) d\xi \\
& = \int_{2\varepsilon \leq |\xi| \leq \pi }e^{i\xi (x-y)}e^{L_\xi t}[\delta_y(x)]d\xi \\
& = \int_{2\varepsilon \leq |\xi| \leq \pi }e^{i\xi (x-y)}[G_\xi(x,t;y)] d\xi,
\notag
\end{split}
\ee
where the brackets $[\cdot]$ denote the periodic extensions of the given function onto the whole line.
Assuming that $Re\sigma(L_\xi) \leq -\eta <0$ for $|\xi| \geq 2\eps$, we have
\be \label{4.12}
[G_\xi(x,t;y)]=\frac{1}{2\pi i}\int_{\Gamma_1} e^{\l t}[G_{\xi,\l}(x,y)] d\l,
\notag
\ee
here, we fix  $\Gamma_1 =\partial(\Omega \cup \{ Re\l \geq -\eta\})$ independent of $\xi$. Parameterizing $\Gamma_1$ by $Im\l : = k$, and applying the bounds of $\ds \sup_{|\xi| \leq \pi} |[ G_{\xi,\l}(x,y)] | < O(|\l^{-\frac{1}{2}}|)$ for large $|\l|$ in Proposition \ref{proposition from J1} and \eqref{compact resolvent}, we have
\be
\begin{split}
|[G_\xi(x,t;y)]|
& \leq C \int_{\Gamma_1} e^{Re \l t} |[G_{\xi,\l}(x,y)]| d\l \\
& \leq C e^{-\eta t } \int_{0}^{\infty} k^{-\frac{1}{2}}e^{- \theta_2 k t} dk \\
& \leq C t^{-\frac{1}{2}}e^{-\eta t} \\
& \leq C t^{-\frac{1}{2}}e^{-\frac{\eta }{2} t}\sum_{j=1}^{n+1}e^{-\frac{|x-y-a_jt|^2}{Mt}},
\end{split}
\notag
\ee
for large $M>0$. Here, the last inequality is from $\frac{|x-y-a_jt|}{t} < S_1$ bounded for all $j$. Indeed, for large $M>0$,
\be
e^{-\frac{|x-y-a_jt|^2}{Mt}} =e^{-(\frac{|x-y-a_jt|}{t})^2 \frac{t}{M}}  \geq e^{-\frac{S_1^2}{M}t} \geq e^{-\frac{\eta}{2}t},
\notag
\ee
and so,
\be\label{high}
\begin{split}
& \Big| \int_{2\eps \leq |\xi| \leq \pi}e^{i\xi x}(1-\phi(\xi) P(\xi))e^{L_\xi t} \check{\delta_y}(\xi,x) d\xi \Big| \\
& \leq C \ds \sup_{2\varepsilon \leq |\xi| \leq \pi}|[G_\xi(x,t;y)]| \\
& \leq C t^{-\frac{1}{2}}e^{-\frac{\eta }{2} t}\sum_{j=1}^{n+1}e^{-\frac{|x-y-a_jt|^2}{Mt}}.
\end{split}
\ee
For sufficiently small $|\xi|$, $I - \phi(\xi)P(\xi)=I-P(\xi)=Q(\xi)$, where $Q$ is the eigenprojection of $L_\xi$ associated with eigenvalues complementary to $\l_j(\xi)$ bifurcating from $\l=0$ at $\xi=0$, which have real parts strictly less than zero.
So we can estimate for $|\xi| \leq \eps$ in
the same way as in $\eqref{high}$. Combining these observations, we have
the estimate
\be
|B| \leq C t^{-\frac{1}{2}}e^{-\frac{\eta }{2} t}\sum_{j=1}^{n+1}e^{-\frac{|x-y-a_jt|^2}{Mt}},
\notag
\ee
for some $\eta > 0$ and sufficiently large $M>0$.

\bigskip

We now consider the low-frequency part  $\mathcal L$. By  Lemma \ref{nonsemisimple}, we know that $\xi \b_{j,n}(\xi)$ is analytic in $\xi$ for sufficiently small $|\xi|$. Letting $\ds\check{\b}_{j,n}(0)= \lim_{\xi \rightarrow 0}\xi \b_{j,n}(\xi)$ and $\check{\l}_j(\xi)=-ia_j\xi-b_j\xi^2$ we have
\be 
\begin{split}
\mathcal L
& = \frac{1}{2\pi}\int_{-\pi}^\pi e^{i \xi x}\phi(\xi) P(\xi)e^{L_\xi t} \check {\delta_y}(\xi,x) d\xi \\
& =\frac{1}{2\pi} \int_{-\pi}^\pi e^{i \xi (x-y)}\phi(\xi)\sum_{j=1}^{n+1}e^{\l_j(\xi)t}q_j(x,\xi)\tilde q_j(y,\xi) d\xi \\
& = \frac{1}{2\pi} \int_{-\pi}^\pi e^{i \xi (x-y)}\phi(\xi)\sum_{j,l,k=1}^{n+1}e^{\l_j(\xi)t} \b_{j,k}(\xi)v_k(x,\xi) \tilde \b_{j,l}(\xi) \tilde v_l(y,\xi) d\xi \\
& = \frac{1}{2\pi} \int_{-\pi}^\pi e^{i \xi (x-y)}\phi(\xi)\sum_{j,l \neq n}^{n+1}e^{\check \l_j(\xi)t} \check \b_{j,n}(0)v_n(0,x) \tilde \b_{j,l}(0) \tilde v_l(0,y)\xi^{-1} d\xi \\
& \quad + \frac{1}{2\pi} \int_{-\pi}^\pi e^{i \xi (x-y)}\phi(\xi)\sum_{j}^{n+1}e^{\l_j(\xi)t} (\xi\b_{j,n})(\xi)v_n(x,\xi) (\xi^{-1}\tilde \b_{j,n}(\xi)) \tilde v_n(y,\xi) d\xi \\
& \quad + \frac{1}{2\pi} \int_{-\pi}^\pi e^{i \xi (x-y)}\phi(\xi)\sum_{j,l \neq n}^{n+1} \left( e^{\l_j(\xi)t} \xi \b_{j,n}(\xi)v_n(x,\xi) \tilde \b_{j,l}(\xi) \tilde v_l(y,\xi) -e^{\check \l_j(\xi)t}\check \b_{j,n}(0)v_n(0,x) \tilde \b_{j,l}(0) \tilde v_l(0,y)   \right)\xi^{-1}d\xi \\
& \quad + \frac{1}{2\pi} \int_{-\pi}^\pi e^{i \xi (x-y)}\phi(\xi)\sum_{j,l\neq n, k\neq n}^{n+1}e^{\l_j(\xi)t} \b_{j,k}(\xi)v_k(x,\xi) \tilde \b_{j,l}(\xi) \tilde v_l(y,\xi) d\xi \\
& \quad + \frac{1}{2\pi} \int_{-\pi}^\pi e^{i \xi (x-y)}\phi(\xi)\sum_{j, k\neq n}^{n+1}e^{\l_j(\xi)t} \b_{j,k}(\xi)v_k(x,\xi) (\xi^{-1}\tilde \b_{j,n})(\xi) \tilde v_n(y,\xi)\xi d\xi \\
& = I+II+III+IV+V
\end{split}
\notag
\ee
We start with the estimate $I$.
\be
\begin{split}
I
&= \sum_{j,l \neq n}^{n+1} \check \b_{j,n}(0)v_n(0,x) \tilde \b_{j,l}(0) \tilde v_l(0,y)\frac{1}{2\pi} \int_{-\pi}^\pi e^{i \xi (x-y)}\phi(\xi)e^{\check \l_j(\xi)t}\xi^{-1} d\xi \\
& =  \sum_{j,l \neq n}^{n+1} \check \b_{j,n}(0)v_n(0,x) \tilde \b_{j,l}(0) \tilde v_l(0,y)\frac{1}{2\pi} p.v. \int_{-\infty}^\infty e^{i \xi (x-y)}e^{\check \l_j(\xi)t}\xi^{-1} d\xi \\
& \qquad + O\left( \int_{|\xi|\geq \varepsilon} e^{i \xi (x-y)}e^{\check \l_j(\xi)t}\xi^{-1} d\xi  \right) \\
& = \sum_{j,l \neq n}^{n+1} \check \b_{j,n}(0)v_n(0,x) \tilde \b_{j,l}(0) \tilde v_l(0,y)\text{errfn}\left(\frac{|x-y-a_jt|^2}{\sqrt{t}}\right) + O\left(\sum_{j=1}^{n+1} t^{-\frac{1}{2}}e^{-\frac{|x-y-a_j t|^2}{Mt}}\right).
\end{split}
\notag
\ee
See \cite{J1} for the detail estimate of   $O\left( \int_{|\xi|\geq \varepsilon} e^{i \xi (x-y)}e^{\check \l_j(\xi)t}\xi^{-1} d\xi  \right)$. Setting $\ds\check{\tilde \b}_{j,n}(0)= \lim_{\xi \rightarrow 0}\xi^{-1} \tilde \b_{j,n}(\xi)$, we separate $II$ into two parts,
\be
\begin{split}
II
& = \frac{1}{2\pi}\sum_{j=1}^{n+1} \int_{-\pi}^\pi e^{i \xi (x-y)}\phi(\xi)e^{\hat \l_j(\xi)t} \check{\b}_{j,n}(0)v_n(x,0) \check{\tilde \b}_{j,n}(0) \tilde v_n(y,0) d\xi \\
& \quad + \sum_{j=1}^{n+1}\frac{1}{2\pi} \int_{-\varepsilon}^\varepsilon e^{i \xi (x-y)}e^{\hat \l_j(\xi)t} \\
& \qquad \times \left(e^{O(|\xi|^3t)} (\xi\b_{j,n})(\xi)v_n(x,\xi) (\xi^{-1}\tilde \b_{j,n}(\xi)) \tilde v_n(y,\xi)-  \check{\b}_{j,n}(0)v_n(x,0) \check{\tilde \b}_{j,n}(0) \tilde v_n(y,0)\right) d\xi \\
& = \sum_{j=1}^{n+1}\check{\b}_{j,n}(0)v_n(x,0) \check{\tilde \b}_{j,n}(0) \tilde v_n(y,0)\frac{1}{2\pi}\int_{-\infty}^\infty e^{i \xi (x-y)}e^{\hat \l_j(\xi)t}d\xi +  O\left( \int_{|\xi|\geq \varepsilon} e^{i \xi (x-y)}e^{\check \l_j(\xi)t} d\xi  \right) \\
& \qquad + \sum_{j=1}^{n+1} \frac{1}{2\pi} \int_{-\varepsilon}^{\varepsilon} e^{i \xi (x-y)} e^{(-ia_j\xi-b_j\xi^2)t}\left(e^{O(\xi^3)t}-1+O(\xi)\right)d\xi \\
& = \sum_{j=1}^{n+1}\check{\b}_{j,n}(0)v_n(x,0) \check{\tilde \b}_{j,n}(0) \tilde v_n(y,0)\frac{1}{\sqrt{4\pi b_jt}}e^{-\frac{|x-y-a_jt|^2}{4b_jt}} + O\left( t^{-\frac{1}{2}} e^{-\frac{|x-y-a_jt|^2}{Mt}} \right)
\end{split}
\notag
\ee
Noting first that   $\xi \b_{j,n}(\xi)v_n(x,\xi) \tilde \b_{j,l}(\xi) \tilde v_l(y,\xi)$ is analytic in $\xi$, we have
\be
\begin{split}
III
& =  \frac{1}{2\pi} \int_{-\varepsilon}^{\varepsilon} e^{i \xi (x-y)}\sum_{j,l \neq n}^{n+1} \left( e^{\l_j(\xi)t} \xi \b_{j,n}(\xi)v_n(x,\xi) \tilde \b_{j,l}(\xi) \tilde v_l(y,\xi) -e^{\check \l_j(\xi)t}\check \b_{j,n}(0)v_n(0,x) \tilde \b_{j,l}(0) \tilde v_l(0,y)   \right)\xi^{-1}d\xi \\
& = \sum_{j,l \neq n}^{n+1} \frac{1}{2\pi} \int_{-\varepsilon}^{\varepsilon} e^{i \xi (x-y)} e^{(-ia_j\xi-b_j\xi^2)t}\left(e^{O(\xi^3)}-1+O(\xi)\right)\xi^{-1}d\xi \\
\end{split}
\notag
\ee
Similarly to \cite{J1}, viewing this as complex contour integral in complex variable $\xi$, define
\be
\a_j:=\frac{x-y-a_j}{2b_jt}
\notag
\ee
which is bounded because $|x-y|/t$ is bounded. Setting
\be
\tilde \a:=\min \{\varepsilon, \a_j\},
\notag
\ee
we have
\be \label{CI}
\begin{split}
|III|
& = \Big|\sum_{j=1}^{n+1} \int_{-\varepsilon}^{\varepsilon} e^{i \xi (x-y-a_jt)} e^{-b_j\xi^2t}\left(e^{O(\xi^3)t}-1+O(\xi)\right)\xi^{-1}d\xi \Big| \\
& = \Big|\sum_{j=1}^{n+1} \int_{-\varepsilon}^{\varepsilon} e^{i(\xi+i\tilde \a) (x-y-a_jt)} e^{-b_j(\xi+i\tilde \a) ^2t}\left(e^{O((\xi+i\tilde \a) ^3)t}-1+O(\xi+i\tilde \a) \right)(\xi+i\tilde \a) ^{-1}d\xi \Big| \\
& \qquad + \Big|\sum_{j=1}^{n+1} \int_{0}^{\tilde \a} e^{i(\varepsilon+iz) (x-y-a_jt)} e^{-b_j(\varepsilon+iz)^2t}\left(e^{O((\varepsilon+iz) ^3)t}-1+O(\varepsilon+iz) \right)(\epsilon+iz) ^{-1}dz \Big| \\
& \leq  C\sum_{j=1}^{n+1}e^{-b_jt\tilde \a^2}\int_{-\varepsilon}^{\varepsilon} e^{-b_j\xi^2t}\left(O(|\xi|^3t)+O(\xi)+O(\tilde \a) \right)|\xi+i\tilde \a |^{-1}d\xi \\
& \qquad + C\sum_{j=1}^{n+1}e^{-b_j\varepsilon^2t}\int_{0}^{\tilde \a}e^{-b_jz^2t} \left( O(|z|^3t)+O(\varepsilon)+O(z) \right)|\varepsilon+iz |^{-1}dz \\
& \leq  C\sum_{j=1}^{n+1}e^{-b_jt\tilde \a^2}\int_{-\varepsilon}^{\varepsilon} e^{-\frac{b_j}{2}\xi^2t}\left(O(\xi)+O(\tilde \a) \right)|\xi+i\tilde \a |^{-1}d\xi \\
& \qquad + C\sum_{j=1}^{n+1}e^{-b_j\varepsilon^2t}\int_{0}^{\tilde \a}e^{-\frac{b_j}{2}z^2t} \left( O(\varepsilon)+O(z) \right)|\varepsilon+iz |^{-1}dz \\
& \leq  C\sum_{j=1}^{n+1}e^{-b_jt\tilde \a^2}\int_{-\varepsilon}^{\varepsilon} e^{-\frac{b_j}{2}\xi^2t} d\xi +
 C\sum_{j=1}^{n+1}e^{-b_j\varepsilon^2t}\int_{0}^{\tilde \a}e^{-\frac{b_j}{2}z^2t} dz  \\
& \leq O\left( \sum_{j=1}^{n+1}t^{-\frac{1}{2}}e^{-\frac{|x-y-a_j t|^2}{Mt}}\right).
\end{split}
\ee
By Lemma \ref{nonsemisimple}, noting first that
\be
\b_{j,k}(\xi)v_k(x,\xi) \tilde \b_{j,l}(\xi) \tilde v_l(y,\xi) =O(1), \quad \text{for} \quad  l\neq n, k\neq n,
\notag
\ee
\be
\b_{j,k}(\xi)v_k(x,\xi) (\xi^{-1}\tilde \b_{j,n})(\xi) \tilde v_n(y,\xi) =O(1), \quad \text{for} \quad  k\neq n
\notag
\ee
and
\be
(\xi\b_{j,n})(\xi)v_n(x,\xi) (\xi^{-1}\tilde \b_{j,n}(\xi)) \tilde v_n(y,\xi) =O(1), 
\notag
\ee
we have
\be
\begin{split}
IV
& = \sum_{j, l \neq n, k \neq n }^{n+1} \int_{-\varepsilon}^\varepsilon e^{i \xi (x-y)}e^{\l_j(\xi)t} O(1)d\xi \\
& = \sum_{j, l \neq n, k \neq n }^{n+1} \int_{-\varepsilon}^\varepsilon e^{i \xi (x-y)}e^{(-ia_j\xi-b\xi^2)t}e^{O(\xi^3)t}d\xi \\
\end{split}
\notag
\ee
and
\be
\begin{split}
V
& = \sum_{j=1}^{n+1} \int_{-\varepsilon}^\varepsilon e^{i \xi (x-y)}e^{\l_j(\xi)t} O(\xi)d\xi.  \\
& =  \sum_{j=1}^{n+1}\int_{-\varepsilon}^\varepsilon e^{i \xi (x-y)}e^{(-ia_j\xi-b\xi^2)t}e^{O(\xi^3)t}O(\xi)d\xi. \\
\end{split}
\notag
\ee
Similarly to \eqref{CI}, we have $\ds IV=V=O\left( \sum_{j=1}^{n+1}t^{-\frac{1}{2}}e^{-\frac{|x-y-a_j t|^2}{Mt}}\right)$.

We now consider the estimate of $G_y(x,t;y)$. By Lemma \ref{nonsemisimple}, recalling $\tilde v_l(0,y)$ is constant for all $l \neq n$, we have
\be
\begin{split}
\partial_y I
& = \sum_{j,l \neq n}^{n+1} \check \b_{j,n}(0)v_n(0,x) \tilde \b_{j,l}(0) \tilde v_l(0,y)\frac{1}{2\pi} \int_{-\pi}^\pi e^{i \xi (x-y)}\phi(\xi)e^{\check \l_j(\xi)t}\xi^{-1}(-i\xi) d\xi \\
& = \sum_{j,l \neq n}^{n+1} \check \b_{j,n}(0)v_n(0,x) \tilde \b_{j,l}(0) \tilde v_l(0,y)\frac{1}{\sqrt{4\pi b_jt}}e^{-\frac{|x-y-a_jt|^2}{4b_jt}} +  O\left( \int_{|\xi|\geq \varepsilon} e^{i \xi (x-y)}e^{\check \l_j(\xi)t}d\xi  \right) \\
&  = \sum_{j,l \neq n}^{n+1} \check \b_{j,n}(0)v_n(0,x) \tilde \b_{j,l}(0) \tilde v_l(0,y)\frac{1}{\sqrt{4\pi b_jt}}e^{-\frac{|x-y-a_jt|^2}{4b_jt}} +  O\left( t^{-1}e^{-\frac{|x-y-a_jt|}{Mt}} \right), \\
\end{split}
\notag
\ee
and
\be
\begin{split}
\partial_y II
& = \sum_{j=1}^{n+1}\check{\b}_{j,n}(0)v_n(x,0) \check{\tilde \b}_{j,n}(0) \tilde v_n^\prime(y,0)\frac{1}{\sqrt{4\pi b_jt}}e^{-\frac{|x-y-a_jt|^2}{4b_jt}} + O\left( t^{-1} e^{-\frac{|x-y-a_jt|^2}{Mt}} \right).
\end{split}
\notag
\ee
Since $\tilde v_l(0,y)$ is constant for all $l \neq n$, $\partial_y \tilde v_l(\xi,y) = O(|\xi|)$ for all $l \neq n$, and so we have
\be
\begin{split}\
\partial_y III
& = \sum_{j,l \neq n}^{n+1} \frac{1}{2\pi} \int_{-\varepsilon}^{\varepsilon} e^{i \xi (x-y)} e^{(-ia_j\xi-b_j\xi^2)t}\left(e^{O(\xi^3)}-1+O(\xi)\right)O(1)d\xi \\
& \qquad +  \sum_{j,l \neq n}^{n+1} \frac{1}{2\pi} \int_{-\varepsilon}^{\varepsilon} e^{i \xi (x-y)} e^{(-ia_j\xi-b_j\xi^2)t}e^{O(\xi^3)t}O(\xi)d\xi \\
& =  O\left( \sum_{j=1}^{n+1}t^{-1}e^{-\frac{|x-y-a_jt|}{Mt}} \right).
\end{split}
\notag
\ee
Similarly, $\partial_y (\b_{j,k}(\xi)v_k(x,\xi) \tilde \b_{j,l}(\xi) \tilde v_l(y,\xi) )=O(|\xi|)$   for  all $l \neq n$, so we have
\be
\begin{split}
\partial_y IV
& =  \sum_{j,l \neq n, k \neq n}^{n+1} \frac{1}{2\pi} \int_{-\varepsilon}^{\varepsilon} e^{i \xi (x-y)} e^{(-ia_j\xi-b_j\xi^2)t}e^{O(\xi^3)t}O(\xi)d\xi \\
& =  O\left( \sum_{j=1}^{n+1}t^{-1}e^{-\frac{|x-y-a_jt|}{Mt}} \right).
\end{split}
\notag
\ee
Since $\partial_y (\b_{j,k}(\xi)v_k(x,\xi) (\xi^{-1}\tilde \b_{j,n})(\xi) \tilde v_n(y,\xi)) =O(1)$, we have
\be
\begin{split}
\partial_y V
=IV=  O\left( \sum_{j=1}^{n+1}t^{-1}e^{-\frac{|x-y-a_jt|}{Mt}} \right).
\end{split}
\notag
\ee

\end{proof}

%%%%%%%%%%%%%%%%%%%%%%%%%%%%%%%%%%%%%%%%%%%%%%%%%%%%%
%%%%%%%%%%%%%%%%%%%%%%%%%%%%%%%%%%%%%%%%%%%%%%%%%%%%%

\section{Pointwise description of perturbations of $\bar u$} \label{main behavior}

%%%%%%%%%%%%%%%%%%%%%%%%%%%%%%%%%%%%%%%%%%%%%%%%%%%%%%
%%%%%%%%%%%%%%%%%%%%%%%%%%%%%%%%%%%%%%%%%%%%%%%%%%%%%%

In this section we describe the pointwise bound of perturbations of \eqref{cs}. Let $\tilde u(x,t)$ be a solution of systems of conservation laws \eqref{cs} and let $\bar u(x)$ be a periodic stationary solution on $[0,1]$. We now define perturbations
\be \label{definition of v}
\begin{split}
u(x,t)  =\tilde u(x,t)-\bar u(x) \quad \text{and} \quad v(x,t)  =\tilde u(x-\vp(x,t),t)-\bar u(x),
\end{split}
\ee
for some unknown functions $\vp(x,t):\RR^2 \longrightarrow \RR$ to be determined later with $\vp(x,0)=0$.

In this section, using the pointwise estimate of the linear  operator $L$ in Theorem \ref{main theorem1}, we establish a pointwise description of perturbations $v$ for a initial condition $v_0=v(x,0)=u(x,0)$:
\be
|v_0(x)| \leq E_0(1+|x|)^{-\frac{3}{2}} \quad \text{and} \quad |v_0(x)|_{H^2} \leq E_0,
\notag
\ee
where $E_0>0$ sufficiently small. \\

Recalling Theorem \ref{main theorem1}, the Green function $G(x,t;y)$ of the linear equation $u_t=Lu$ satisfies the estimates:
\be
\begin{split}
G(x,t;y)
& = \bar u'(x)\sum_{j=1}^{n+1}\sum_{l \neq n}^{n+1} \check \b_{j,n}(0) \tilde \b_{j,l}(0) \tilde v_l(0,y)\text{errfn}\left(\frac{|x-y-a_jt|^2}{\sqrt{t}}\right) \\
& \qquad + \bar u'(x)\sum_{j=1}^{n+1} \check \b_{j,n}(0) \check{\tilde \b}_{j,n}(0) \tilde v_n(0,y)\frac{1}{\sqrt{4\pi b_jt}}e^{-\frac{|x-y-a_jt|^2}{4b_jt}} \\
& \qquad + O\left(\sum_{j=1}^{n+1} t^{-\frac{1}{2}}e^{-\frac{|x-y-a_j t|^2}{Mt}}\right), \\
G_y (x,t;y)
& = \bar u'(x)\sum_{j=1}^{n+1}\check \b_{j,n}(0)\left( \sum_{l \neq n}^{n+1} \tilde \b_{j,l}(0)  \tilde v_l(0,y)+ \check{\tilde \b}_{j,n}(0)\tilde v_n^\prime(0,y)\right) \frac{1}{\sqrt{4\pi b_jt}}e^{-\frac{|x-y-a_j t|^2}{4b_jt}}\\
& \qquad + O\left(\sum_{j=1}^{n+1} t^{-1}e^{-\frac{|x-y-a_j t|^2}{Mt}}\right), \\
\end{split}
\notag
\ee
uniformly on $t\geq 0$, for some sufficiently large constant $M>0$, where $\ds \check{\b}_{j,n}(0)= \lim_{\xi \rightarrow 0}\xi \b_{j,n}(\xi)$ and $\ds \check{\tilde \b}_{j,n}(0)= \lim_{\xi \rightarrow 0}\xi^{-1} \tilde \b_{j,n}(\xi)$ for $\b_{j,n}(\xi)$,  $\tilde \b_{j,n}(\xi)$, $v(\xi,x)$ and $\tilde v(\xi,x)$ defined in Lemma \ref{nonsemisimple}.

First off, let $\chi(t)$ be a smooth cut off function defined for $t \geq 0$ such that $\chi(t)=0$ for $0\leq t\leq 1$ and $\chi(t)=1$ for $t \geq 2$ and define
\be 
E(x,t;y):=\bar u'(x)e(x,t;y),
\notag
\ee
where
\be
\begin{split}
e(x,t;y)
& =\sum_{j=1}^{n+1}\sum_{l \neq n}^{n+1} \check \b_{j,n}(0) \tilde \b_{j,l}(0) \tilde v_l(0,y)\text{errfn}\left(\frac{|x-y-a_jt|^2}{\sqrt{t}}\right)\chi(t) \\
& \qquad + \sum_{j=1}^{n+1} \check \b_{j,n}(0) \check{\tilde \b}_{j,n}(0) \tilde v_n(0,y)\frac{1}{\sqrt{4\pi b_jt}}e^{-\frac{|x-y-a_jt|^2}{4b_jt}}\chi(t) \\
\end{split}
\notag
\ee
Now we set
\be
\begin{split}
\tilde G(x,t;y) = G(x,t;y)-E(x,t;y).
\notag
\end{split}
\ee
so that
\be \label{estimate of tilde G}
|\tilde G(x,t;y)| \leq Ct^{-\frac{1}{2}}\sum_{j=1}^{n+1}e^{-\frac{|x-y-a_jt|^2}{Mt}} \quad \text{and} \quad  |\tilde G_y(x,t;y)| \leq Ct^{-1}\sum_{j=1}^{n+1}e^{-\frac{|x-y-a_jt|^2}{Mt}}.
\ee
To establish a pointwise description of perturbations $v$, we first start with the nonlinear perturbation equation of $v$ (\cite{JZ1}).
\begin{lemma}[Nonlinear perturbation equations, \cite{JZ1}]
For $v$ defined in \eqref{definition of v}, we have
\be \label{nonlinear perturbation equation}
(\partial_t-L)v=-(\partial_t-L)\bar u'(x)\vp + Q_x+ R_x + (\partial_x^2+\partial_t)S,
\ee
where
\be \label{Q}
Q:=f(v(x,t)+\bar{u}(x))-f(\bar{u}(x))-df(\bar{u}(x))v=\mathcal{O}(|v|^2),
\ee
\be \label{R}
R:= -v\psi_t - v\psi_{xx}+ (\bar u_x +v_x)\frac{\vp_x^2}{1-\vp_x},
\ee
\be \label{S}
S:= v\vp_x =O(|v| |\vp_x|),
\ee
\end{lemma}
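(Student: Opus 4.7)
The plan is to establish the identity by a direct chain-rule calculation, organized so that the linear-in-$v$ and linear-in-$\vp$ parts separate cleanly, leaving only a prescribed quadratic remainder to be repackaged as $Q_x+R_x+(\partial_x^2+\partial_t)S$.

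First I would set $y=y(x,t):=x-\vp(x,t)$ and regard $\tilde u$ as a function of $(y,t)$. Differentiating the relation $v(x,t)=\tilde u(y(x,t),t)-\bar u(x)$ and solving for the $y$- and $t$-derivatives of $\tilde u$ at the base point $(y,t)$ gives
\[
\tilde u_y=\frac{v_x+\bar u'(x)}{1-\vp_x},\qquad \tilde u_t=v_t+\tilde u_y\,\vp_t,\qquad \tilde u_{yy}(1-\vp_x)^2=v_{xx}+\bar u''(x)+\tilde u_y\,\vp_{xx}.
\]
Substituting these formulas into the equation $\tilde u_t=\tilde u_{yy}+(f(\tilde u))_y$ evaluated at $(y,t)$ and multiplying out produces a schematic identity of the form
\[
v_t-v_{xx}-(df(\bar u)v)_x=(\text{terms linear in }\vp)+(\text{quadratic cross terms in }v,\vp_x,\vp_t,\vp_{xx}).
\]

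Next I would invoke two structural facts to reorganize. The profile equation $\bar u''+f(\bar u)_x=0$ cancels all pure-$\bar u$ contributions. Differentiating the profile equation in $x$ yields $L\bar u'=0$, and therefore
\[
(\partial_t-L)(\bar u'\vp)=\bar u'\vp_t-(\bar u'\vp_x)_x,
\]
which is exactly the shape of the linear-in-$\vp$ pieces produced in the previous step. At the same time, Taylor expanding the flux gives $f(v+\bar u)-f(\bar u)-df(\bar u)v=Q=O(|v|^2)$, and this nonlinear flux contribution will enter as $Q_x$ after one spatial differentiation.

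What remains is a collection of cross terms, each carrying an extra factor of $\vp_x$ or of $v$. Here I would use the elementary identities
\[
\partial_x(v\vp_x)=v_x\vp_x+v\vp_{xx},\qquad \partial_t(v\vp_x)=v_t\vp_x+v\vp_{xt},
\]
together with the geometric expansion $(1-\vp_x)^{-1}=1+\vp_x/(1-\vp_x)$, to force every leftover into the conservation form $R_x+(\partial_x^2+\partial_t)S$ with $R,S$ as in \eqref{R} and \eqref{S}. The main obstacle is the bookkeeping: the choice $S=v\vp_x$ is forced because the cross term $v_t\vp_x$ produced by the chain rule has no $\partial_x$-primitive among the available quantities, and once it is absorbed into $\partial_t S$ the remaining pieces $-v\vp_t$, $-v\vp_{xx}$ and $(\bar u_x+v_x)\vp_x^2/(1-\vp_x)$ are exactly the residues that must enter $R$. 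Verifying that no stray term is left over is the only nontrivial point of the calculation.
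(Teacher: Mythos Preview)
Your approach is correct and is precisely what the paper intends: the paper's own ``proof'' consists of the single line ``Direct computation; see \cite{JZ1},'' so the chain-rule calculation you outline---changing variables via $y=x-\vp(x,t)$, using the profile equation and $L\bar u'=0$ to isolate the linear-in-$\vp$ piece as $-(\partial_t-L)(\bar u'\vp)$, and then collecting the quadratic remainder into $Q_x+R_x+(\partial_x^2+\partial_t)S$---is exactly the computation being cited. One minor point: your displayed formula $(\partial_t-L)(\bar u'\vp)=\bar u'\vp_t-(\bar u'\vp_x)_x$ is not literally correct as written (the operator $L$ also contains the $(df(\bar u)\,\cdot\,)_x$ term), but this does not affect your argument, since the full expression $(\partial_t-L)(\bar u'\vp)$ is what appears on the right-hand side of \eqref{nonlinear perturbation equation} and you never need to expand it further.
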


\begin{proof}
Direct computation; see \cite{JZ1}.
\end{proof}

%%%%%%%%%%%%%%%%%%%%%%%%%%%%%%%%%%%%%%%%%%%%%%%%%%%%%%
%%%%%%%%%%%%%%%%%%%%%%%%%%%%%%%%%%%%%%%%%%%%%%%%%%%%%

\subsection{Integral representation and $\vp$-evolution scheme } \label{iteration scheme}

%%%%%%%%%%%%%%%%%%%%%%%%%%%%%%%%%%%%%%%%%%%%%%%%%%%%%%
%%%%%%%%%%%%%%%%%%%%%%%%%%%%%%%%%%%%%%%%%%%%%%%%%%%%%

We now recall the nonlinear iteration scheme of \cite{JZ1}. Setting
\be 
N(x,t)=(Q_x+ R_x + (\partial_x^2+\partial_t)S)(x,t),
\notag
\ee
and applying Duhamel's principle to \eqref{nonlinear perturbation equation},  we obtain the integral representation of $v$
\be
\begin{split}
v(x,t)
& =-\bar u'(x)\vp(x,t) + \int_{-\infty}^\infty G(x,t;y) v_0(y)dy + \int_0^t \int_{-\infty}^\infty G(x,t-s;y)N(y,s)dyds.
\notag
\end{split}
\ee
for the nonlinear perturbation $v$.
Defining $\vp$ implicitly by
\be
\vp(x,t):=\int_{-\infty}^\infty e(x,t;y)v_0(y)dy +\int_0^t \int_{-\infty}^\infty e(x,t-s;y)N(y,s)dyds
\notag
\ee
to subtract out $E(x,t;y)$ from $G(x,t;y)$, we have the new integral representation of $v$
\be \label{integral representation of v}
v(x,t) =\int_{-\infty}^\infty \tilde G(x,t;y) v_0(y)dy +\int_0^t \int_{-\infty}^\infty \tilde G(x,t-s;y)N(y,s)dyds.
\ee
Differentiating and using $e(x,t;y)=0$ for $0<t \leq 1$ we obtain
\be \label{derivative of psi}
\partial_t^k \partial_x^m \vp(x,t) = \int_{-\infty}^\infty \partial_t^k \partial_x^m e(x,t;y)v_0 dy + \int_0^t \int_{-\infty}^\infty \partial_t^k \partial_x^m e(x,t-s;y)N(y,s)dyds.
\ee
Together, \eqref{integral representation of v} and \eqref{derivative of psi}
 form a  complete system in $(v,\partial_t^k\vp, \partial_x^m\vp)$, $0\leq  k \leq 1$, $0\leq m \leq 2$, that is, $v$ and derivatives of
$\vp$,
from solutions of which we may afterward recover the shift function
$\vp$ by integration in $x$, completing the description of $\tilde u$.

 %%%%%%%%%%%%%%%%%%%%%%%%%%%%%%%%%%%%%%%%%%%%%%%%%%%%%%
%%%%%%%%%%%%%%%%%%%%%%%%%%%%%%%%%%%%%%%%%%%%%%%%%%%%%

\subsection{Pointwise description of $v$ for initial perturbations $|v_0(x)| \leq E_0(1+|x|)^{-\frac{3}{2}}$ with $|v_0(x)|_{H^2} \leq E_0$, sufficiently small $E_0>0$ } \label{initial perturbation}.

%%%%%%%%%%%%%%%%%%%%%%%%%%%%%%%%%%%%%%%%%%%%%%%%%%%%%%
%%%%%%%%%%%%%%%%%%%%%%%%%%%%%%%%%%%%%%%%%%%%%%%%%%%%%

In this section, we prove Theorem \ref{main theorem2}. We start with $L^p$ estimates of $v$ , $u$ and $\vp$ which are proved in \cite{JZ1}. We state the main theorem of \cite{JZ1} describing the $L^p$ stability of periodic standing waves of \eqref{cs} in dimension $d=1$. We use the following Theorem \ref{nonlinear stability} when we derive pointwise estimates of the nonlinear terms of $v$ in \eqref{integral representation of v}, and this is the reason why we need $H^2$ condition in our initial perturbations.

\begin{theorem}[Nonlinear stability, \cite{JZ1}] \label{nonlinear stability} Let $v(x,t)$ and $u(x,t)$ be defined as in \eqref{definition of v} and $|u_0(x)|=|v_0(x)|_{L^1 \cap H^2(\RR)} \leq  E_0$, for  sufficiently small $E_0>0$. Then for all $t\geq 0$ and $p\geq 1$ we have the estimates
\be 
\begin{split}
&|v(\cdot,t)|_{L^p(\RR)}(t) \leq CE_0(1+t)^{-\frac{1}{2}(1-\frac{1}{p})} \\
&|u(\cdot,t)|_{L^p(\RR)}(t), \quad |\vp(\cdot,t)|_{L^p(\RR)}(t) \leq CE_0(1+t)^{-\frac{1}{2}(1-\frac{1}{p})+\frac{1}{2}} \\
&|v(\cdot,t)|_{H^2(\RR)}(t), \quad |(\vp_t, \vp_x)(\cdot,t)|_{H^2(\RR)}(t) \leq CE_0(1+t)^{-\frac{1}{4}}.
\end{split}
\notag
\ee
\end{theorem}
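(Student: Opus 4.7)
The proof follows the nonlinear iteration of \cite{JZ1}, built on the integral representation \eqref{integral representation of v} for $v$ and the $\vp$-evolution formula \eqref{derivative of psi}. The plan is to introduce a time-weighted template
\[
\zeta(t):= \sup_{0\le s\le t,\,1\le p\le\infty}\frac{1}{E_0}\Bigl\{(1+s)^{\frac{1}{2}(1-\frac{1}{p})}|v(s)|_{L^p}+(1+s)^{-\frac{1}{2p}}|(\vp,u)(s)|_{L^p}+(1+s)^{\frac{1}{4}}\bigl(|v(s)|_{H^2}+|(\vp_t,\vp_x)(s)|_{H^2}\bigr)\Bigr\}
\]
which encodes every target rate simultaneously, and then to close a bootstrap $\zeta(t)\le C_1+C_2\zeta(t)^2$. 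For $E_0$ small this gives $\zeta(t)\le 2C_1$ by continuous induction, which is exactly the theorem.

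First I convert Theorem \ref{main theorem1} into $L^q\!\to\!L^p$ mapping estimates on the pieces of $G$. From the Gaussian bounds \eqref{estimate of tilde G}, Minkowski gives $|\tilde G(\cdot,t;y)|_{L^p_x}\le C(1+t)^{-\frac{1}{2}(1-\frac{1}{p})}$ and $|\tilde G_y(\cdot,t;y)|_{L^p_x}\le C(1+t)^{-\frac{1}{2}(1-\frac{1}{p})-\frac{1}{2}}$, so the linear piece $\int\tilde G\,v_0\,dy$ of $v$ already sits at the target rate using $v_0\in L^1$. For the excised errfn$+$Gaussian kernel $e$ appearing in \eqref{derivative of psi}, direct computation yields the $\vp$-level rate $|\int e(\cdot,t;y)v_0(y)\,dy|_{L^p_x}\le C(1+t)^{\frac{1}{2p}}|v_0|_{L^1}$, with each additional $\partial_x$ or $\partial_t$ in \eqref{derivative of psi} improving the power by $\tfrac12$; these give the $L^p$ rates for $\vp_x$, $\vp_t$.

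Next I handle the nonlinear source $N=Q_x+R_x+(\partial_x^2+\partial_t)S$ from \eqref{Q}--\eqref{S}. Every term of $N$ is quadratic in $(v,\vp_x,\vp_t)$ carrying at most two derivatives on the factors; I shift all derivatives off of $N$ and onto $\tilde G$ by integration by parts in $y$, and for the $\partial_s S$ contribution by integration by parts in $s$ (using $\partial_s\tilde G(x,t-s;y)=-\partial_t\tilde G(x,t-s;y)$ and the cutoff in $e$). Each transferred derivative costs an extra $(t-s)^{-\frac{1}{2}}$ in the kernel but no regularity. Using H\"older to bound the quadratic product via the template (e.g.\ $|v\cdot\vp_x|_{L^1}\le|v|_{L^2}|\vp_x|_{L^2}\lesssim E_0^2\zeta(t)^2(1+s)^{-\frac{1}{2}}$), splitting $\int_0^t=\int_0^{t/2}+\int_{t/2}^t$ and bounding each Beta-type time integral, the nonlinear contribution to every $L^p$ quantity in $\zeta$ is $\le C\zeta(t)^2\cdot(\text{template rate})$; combined with the linear estimates this closes the $L^p$ portion of the bootstrap.

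The $H^2$ control cannot be extracted from the integral representation and instead requires a parabolic energy estimate directly on \eqref{nonlinear perturbation equation}. Taking $L^2$ inner products with $v$, $v_{xx}$, $v_{xxxx}$ and exploiting the second-order dissipation $-\partial_x^2$ in $L$ yields, after absorption of good terms,
\[
\frac{d}{dt}\mathcal{E}(v)+\theta|v_x|_{H^2}^2\le C\bigl(|v|_{L^2}^2+|(\vp_t,\vp_x)|_{H^2}^2\bigr)+\mathcal{N}(v,\vp),
\]
with $\mathcal{E}(v)\sim|v|_{H^2}^2$ (requiring the $H^2$ hypothesis on $v_0$ to initialize) and $\mathcal{N}$ collecting higher-order terms controlled by $\zeta$. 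Feeding in the $L^2$ decay $|v|_{L^2}\le CE_0(1+t)^{-\frac{1}{4}}$ from the $L^p$ bootstrap and using a weighted Gronwall argument yields $|v|_{H^2}\le CE_0(1+t)^{-\frac{1}{4}}$, and plugging this into the space- and time-differentiated \eqref{derivative of psi} closes the $H^2$ bound on $(\vp_t,\vp_x)$. The main obstacle is borderline closure: after integration by parts, several nonlinear contributions produce time integrals $\int_0^t(t-s)^{-1}(1+s)^{-\alpha}\,ds$ with $\alpha$ sitting precisely at the critical value, and the extra $(t-s)^{-\frac{1}{2}}$ in $\tilde G_y$ versus $\tilde G$ coming from the Gaussian (not errfn) part of Theorem \ref{main theorem1} is exactly what is needed to tip them into convergence; a secondary difficulty is that $v$, $\vp$, $\vp_t$, $\vp_x$ all couple quadratically in $N$, so the coupled template $\zeta$ must be closed all at once rather than in stages.
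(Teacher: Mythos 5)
Your proposal is correct in outline and follows essentially the same route as the source the paper relies on: the paper itself gives no proof of this theorem (its ``proof'' is the citation to \cite{JZ1}), and your scheme---Green-function/integral-representation bounds from \eqref{integral representation of v} and \eqref{derivative of psi} for the $L^p$ rates, a nonlinear damping (parabolic energy) estimate on \eqref{nonlinear perturbation equation} for the $H^2$ control, and a quadratic bootstrap $\zeta\leq C(E_0+\zeta^2)$ closed by continuous induction---is precisely the Johnson--Zumbrun argument being quoted. No genuine gap beyond the expected care near $s=t$ (using $L^2$ rather than $L^1$ norms of the source there), which you acknowledge.
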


\begin{proof}
See \cite{JZ1} for the proof.
\end{proof}

To prove Theorem \ref{main theorem2}, we first prove the following lemma. We follow the strategy of \cite{HZ}. We give here details of \cite{HZ} to help clarify that argument.

%\begin{theorem}\label{main theorem2}
%Let $\bar u$ be a periodic standing-wave solution of \eqref{cs} and let $u:=\tilde u-\bar u$, where $\tilde u$ is any solution of \eqref{cs} such that $|\tilde u(x,0)-\bar u(x,0)| \leq E_0(1+|x|)^{-\frac{3}{2}}$, $E_0$ sufficiently small. Then for some $\vp(\cdot,t) \in W^{2,\infty}$, we have the pointwise estimates
%\be \label{estimate in main2}
%|\tilde u(x-\vp(x,t),t)-\bar u(x)| \leq CE_0(\theta + \psi_1+\psi_2),
%\ee
%where
%\be
%\theta(x,t):=\sum_{j=1}^{n+1}(1+t)^{-\frac{1}{2}}e^{-\frac{|x-a_jt|^2}{Mt}},
%\ee
%\be
%\psi_1(x,t):=\chi(x,t)\sum_{j=1}^{n+1}(1+|x|+t)^{-\frac{1}{2}}(1+|x-a_jt|)^{-\frac{1}{2}}
%\ee
%and
%\be
%\psi_2(x,t):=(1-\chi(x,t))(1+|x-a_1t|+\sqrt t)^{-\frac{3}{2}}+(1-\chi(x,t))(1+|x-a_{n+1}t|+\sqrt t)^{-\frac{3}{2}},
%\ee
%where $\chi(x,t) =1$ for $x \in [a_1t,a_{n+1}t]$ and zero otherwise, and $M>0$ is a sufficiently large constant.
%\end{theorem}
%
%

\begin{lemma} \label{main lemma}
Suppose  that the initial perturbation $v_0$ satisfies $|v_0(x)| \leq E_0(1+|x|)^{-\frac{3}{2}}$ and $|v_0(x)|_{H^2} \leq E_0$, for sufficiently small $E_0>0$. For $v$, $\vp_t$, $\vp_x$ and  $\vp_{xx}$ defined in the integral system \eqref{integral representation of v} - \eqref{derivative of psi}, define
\be \label{zeta}
\zeta(t):=\sup_{0\leq s\leq t, x\in \RR} |(v,\vp_t, \vp_x, \vp_{xx})(x,s)|(\theta + \psi_1+\psi_2)^{-1},
\ee
where
\be
\theta(x,t):=\sum_{j=1}^{n+1}(1+t)^{-\frac{1}{2}}e^{-\frac{|x-a_jt|^2}{M't}},
\notag
\ee
\be
\psi_1(x,t):=\chi(x,t)\sum_{j=1}^{n+1}(1+|x|+t)^{-\frac{1}{2}}(1+|x-a_jt|)^{-\frac{1}{2}}
\notag
\ee
and
\be
\psi_2(x,t):=(1-\chi(x,t))(1+|x-a_1t|+\sqrt t)^{-\frac{3}{2}}+(1-\chi(x,t))(1+|x-a_{n+1}t|+\sqrt t)^{-\frac{3}{2}},
\notag
\ee
where $\chi(x,t) =1$ for $x \in [a_1t,a_{n+1}t]$ and zero otherwise, and $M'>0$ is a sufficiently large constant with $M'>M$.
Then, for all $t\geq 0$ for which $\zeta(t)$ defined in \eqref{zeta} is finite,
\be \label{inequality of zeta}
\zeta(t) \leq C(E_0+\zeta^2(t))
\ee
for some constant $C>0$.
\end{lemma}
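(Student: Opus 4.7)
The plan is to estimate each of $v,\vp_t,\vp_x,\vp_{xx}$ pointwise by bounding the integral representations \eqref{integral representation of v} and \eqref{derivative of psi}. Each expression splits into a linear piece arising from $v_0$ and a nonlinear piece arising from $N$. The goal is to show that the linear piece is bounded by $CE_0(\theta+\psi_1+\psi_2)$ and the nonlinear piece by $C\zeta(t)^2(\theta+\psi_1+\psi_2)$, which combine to give \eqref{inequality of zeta}.

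For the linear piece, I would insert $|v_0(y)|\leq E_0(1+|y|)^{-3/2}$ into
\[
\int_{-\infty}^{\infty}\tilde G(x,t;y)v_0(y)\,dy\quad\text{and}\quad\int_{-\infty}^{\infty}\partial_t^k\partial_x^m e(x,t;y)v_0(y)\,dy,
\]
and apply \eqref{estimate of tilde G} together with the analogous Gaussian bounds for $\partial_t^k\partial_x^m e$ coming from direct differentiation of the errfn and heat-kernel modes defining $e$. Each $a_j$-shifted Gaussian $t^{-1/2}e^{-|x-y-a_jt|^2/(Mt)}$ convolved against $(1+|y|)^{-3/2}$ reproduces precisely the template $\theta+\psi_1+\psi_2$: the $\theta$ part for $|x-a_jt|\lesssim\sqrt t$, the product form $(1+|x|+t)^{-1/2}(1+|x-a_jt|)^{-1/2}$ inside the cone $[a_1t,a_{n+1}t]$, and the $(1+|x-a_jt|+\sqrt t)^{-3/2}$ tail outside for $j=1$ and $j=n+1$. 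These are standard weighted-convolution computations of the type systematized in \cite{HZ}.

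For the nonlinear piece, by \eqref{Q}--\eqref{S} and the definition of $\zeta$, together with the $H^2$ bound $|v_x|_{L^\infty}(s)\leq CE_0(1+s)^{-1/4}$ from Theorem \ref{nonlinear stability} to handle the $v_x$ factor in $R$, one has pointwise
\[
|N(y,s)|\leq C\bigl(\zeta(s)^2+E_0\zeta(s)\bigr)(\theta+\psi_1+\psi_2)^2(y,s)
\]
up to inessential cross terms involving $\bar u_x$. Since $N=Q_x+R_x+(\partial_x^2+\partial_t)S$ carries one or two derivatives, I would integrate by parts in $y$ and $s$ to move them onto $\tilde G$ and $e$, so that the nonlinear integrals involve $\tilde G_y,\tilde G_{yy}$ (using $(\partial_t-L)\tilde G=0$ to trade $\partial_s$ for $\partial_y^2$ via the equation) and the corresponding derivatives of $e$. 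The $\tilde G_y$ bound in \eqref{estimate of tilde G} is integrable against the template away from $s=t$; near $s=t$ the $t^{-1}$ singularity is absorbed by splitting off the interval $[t-1,t]$ and using the $H^2$-controlled estimates of Theorem \ref{nonlinear stability} directly on that slab.

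The main obstacle will be the convolution of the quadratic template $(\theta+\psi_1+\psi_2)^2$ against the Gaussian/errfn kernels, especially the cross interactions between distinct characteristic speeds $a_j$ inside the cone, where products of $\psi_1$ terms must be shown to reproduce a single $\psi_1$, and the $\psi_2$--$\psi_2$ self-interaction outside the cone. I would handle these by decomposing $(y,s)$-space into regions in which each of $\theta,\psi_1,\psi_2$ dominates and estimating each subintegral separately, invoking the scattering-type lemmas of \cite{HZ} as a template; this is the computational heart of the argument.
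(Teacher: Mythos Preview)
Your proposal is correct and follows essentially the same approach as the paper: reduce to the four convolution estimates \eqref{linear estimate}--\eqref{nonlinear estimate3} by bounding $|(Q,R,S)|\leq C\zeta^2(\theta+\psi_1+\psi_2)^2$ via Theorem \ref{nonlinear stability} and a single integration by parts onto $\tilde G_y$, then verify each convolution by the detailed \cite{HZ}-style case analysis on the signs and relative positions of $x$, $a_j$, $a_k$. The paper does not actually invoke $\tilde G_{yy}$ or a separate $[t-1,t]$ slab---the $(t-s)^{-1}$ singularity of $\tilde G_y$ is absorbed directly by the $(t-s)^{1/2}$ gain from the $y$-integration of the Gaussian---so your treatment is slightly more elaborate than necessary, but the strategy is the same.
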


\begin{proof}
It is enough to estimate $v$,
\be \label{estimate of v}
|v(x,t)|  \leq C(E_0+\zeta^2(t))(\theta + \psi_1+\psi_2).
\ee
We can prove similarly for ($\vp_t$, $\vp_x$, $\vp_{xx}$) because 
\be
|\partial_t^k \partial_x^m e(x,t;y)| \lesssim |\tilde G(x,t;y)| \quad \text{and} \quad |\partial_y(\partial_t^k \partial_x^m e(x,t;y))| \lesssim |\partial_y\tilde G(x,t;y)|,
\notag
\ee
for $0\leq k \leq 1$ and $0 \leq m \leq 2$.
Notice first that by Theorem \ref{nonlinear stability}, we have $|v_x|_{\infty} \leq |v|_{H^2} \leq CE_0(1+t)^{-\frac{1}{4}} \leq C$. Then by \eqref{Q}-\eqref{S} and \eqref{zeta}, for all $y \in \RR$ and  $0 \leq s \leq t$,
\be
|(Q,R,S)(y,s)| \leq C|(v, \vp_s, \vp_y, \vp_{yy})(y,s)|^2 C\leq \zeta^2(t)(\theta + \psi_1+\psi_2)^{2},
\notag
\ee
and hence by \eqref{integral representation of v} and integration by parts, we have
\be
\begin{split}
|v(x,t)|
& \leq \int_{-\infty}^\infty |\tilde G(x,t;y)|| v_0(y)|dy +\int_0^t \int_{-\infty}^\infty |\tilde G_y(x,t-s;y)||(v, \vp_s, \vp_y, \vp_{yy})(y,s)|^2 dyds \\
& \leq \int_{-\infty}^\infty |\tilde G(x,t;y)|| v_0(y)|dy +\zeta^2(t) \int_0^t \int_{-\infty}^\infty |\tilde G_y(x,t-s;y)||\theta + \psi_1+\psi_2|^{2}dyds.  \\
\end{split}
\notag
\ee
To argue \eqref{estimate of v}, we need to prove following estimates:
\be \label{linear estimate}
\int_{-\infty}^\infty |\tilde G(x,t;y)||v_0(y)|dy \leq CE_0(\theta+\psi_1+\psi_2)(x,t),
\ee

\be \label{nonlinear estimate1}
\int_0^t \int_{-\infty}^\infty |\tilde G_y(x,t-s;y)||\theta(y,s)|^2dyds \leq C(\theta+\psi_1+\psi_2)(x,t),
\ee

\be \label{nonlinear estimate2}
\int_0^t \int_{-\infty}^\infty |\tilde G_y(x,t-s;y)||\psi_1(y,s)|^2dyds \leq C(\theta+\psi_1+\psi_2)(x,t),
\ee

\be \label{nonlinear estimate3}
\int_0^t \int_{-\infty}^\infty |\tilde G_y(x,t-s;y)||\psi_2(y,s)|^2dyds \leq C(\theta+\psi_1+\psi_2)(x,t).
\ee

%%%%%%%%%%%%%%%%%%%%%%%%%%%%%%%%%%%%%%%%%%%%%%%%%%%%%
%%%%%%%%%%%%%%%%%%%%%%%%%%%%%%%%%%%%%%%%%%%%%%%%%%

\begin{proof}[\textbf{Proof of the estimate (\ref{linear estimate})}] We start with  the linear estimate of $v$,
\be
\int_{-\infty}^\infty |\tilde G(x,t;y)||v_0(y)|dy \leq CE_0(\theta+\psi_1+\psi_2)(x,t).
\notag
\ee
By \cite{J1} and \cite{HZ}, we have
\be
\begin{split}
\int_{-\infty}^\infty |\tilde G(x,t;y)||v_0(y)|dy
& \leq CE_0\int_{-\infty}^\infty  t^{-\frac{1}{2}}\sum_{j=1}^{n+1}e^{-\frac{|x-y-a_jt|^2}{Mt}} (1+|y|)^{-\frac{3}{2}}dy  \\
& \leq CE_0\sum_{j=1}^{n+1}\Big[(1+|x-a_jt|+\sqrt t)^{-\frac{3}{2}}+(1+t)^{-\frac{1}{2}}e^{-\frac{|x-a_jt|^2}{M't}}\Big].
\end{split}
\notag
\ee
We now need to show that for any $j$,
\be
(1+|x-a_jt|+\sqrt t)^{-\frac{3}{2}} \leq C(\theta+\psi_1+\psi_2)(x,t).
\notag
\ee
We consider several cases. Here we assume $a_1 < a_2 < \cdots < a_{n+1}$.  \\

\textbf{case1.} $x \leq a_1t$ or $x\geq a_{n+1}t$.  For any $j=1, \cdots, n+1$, $|x-a_1t| \leq |x-a_jt|$ for $x\leq a_1t$ and $|x-a_{n+1}t|\leq |x-a_jt|$ for $x\geq a_{n+1}$. Thus
\be
(1+|x-a_jt|+\sqrt t)^{-\frac{3}{2}} \leq (1+|x-a_1t|+\sqrt t)^{-\frac{3}{2}} , \quad \text{for} \quad x\leq a_1t,
\notag
\ee
and
\be
(1+|x-a_jt|+\sqrt t)^{-\frac{3}{2}} \leq (1+|x-a_{n+1}t|+\sqrt t)^{-\frac{3}{2}} , \quad \text{for} \quad x\geq a_{n+1}t. 
\notag
\ee
\\

\textbf{case2.} $x \in [a_1t, a_{n+1}t]$,  and  $x$ and $a_j$ have opposite signs. In this case, $|x-a_jt| \leq C(|x|+t)$ because of no cancellation. So
\be
\begin{split}
(1+|x-a_jt|+\sqrt t)^{-\frac{3}{2}}
& =(1+|x-a_jt|)^{-\frac{1}{2}}(1+|x-a_jt|)^{-\frac{1}{2}} \\
& \leq C(1+|x|+t)^{-\frac{1}{2}}(1+|x-a_jt|)^{-\frac{1}{2}}
\end{split}
\notag
\ee
\\

\textbf{case3.} $x \in [a_1t, a_{n+1}t]$, and $x$ and $a_j$ have same signs. If $x \in [\frac{a_j}{2}t, 2a_jt]$, then $t^{-\frac{1}{2}} \leq C(|x|+t)^{-\frac{1}{2}}$, so
\be
\begin{split}
(1+|x-a_jt|+\sqrt t)^{-\frac{3}{2}}
& \leq C(1+|x-a_jt|)^{-\frac{1}{2}}(1+\sqrt t)^{-1} \\
& \leq C(1+|x-a_jt|)^{-\frac{1}{2}}(1+t)^{-\frac{1}{2}} \\
& \leq C(1+|x-a_jt|)^{-\frac{1}{2}}(1+t+|x|)^{-\frac{1}{2}} .
\end{split}
\notag
\ee
If $x \notin [\frac{a_j}{2}t, 2a_jt]$, there can be only limited cancellation between $x$ and $a_jt$, and so $|x-a_jt|\leq C(|x|+t)$, that is,
\be
\begin{split}
(1+|x-a_jt|+\sqrt t)^{-\frac{3}{2}}
& \leq C(1+|x-a_jt|+\sqrt t)^{-\frac{1}{2}}(1+|x-a_jt|)^{-\frac{1}{2}} \\
& \leq C(1+|x-a_jt|+\sqrt t)^{-\frac{1}{2}}(1+|x|+t)^{-\frac{1}{2}}
\end{split}
\notag
\ee
\end{proof}

%%%%%%%%%%%%%%%%%%%%%%%%%%%%%%%%%%%%%%%%%%%%%%%%%%%%%
%%%%%%%%%%%%%%%%%%%%%%%%%%%%%%%%%%%%%%%%%%%%%%%%%%

\begin{proof}[\textbf{Proof of the estimate (\ref{nonlinear estimate1})}]
We now estimate the first nonlinear term of $v$,
\be
I = \int_0^t \int_{-\infty}^\infty |\tilde G_y(x,t-s;y)||\theta(y,s)|^2 dyds \leq C(\theta+\psi_1+\psi_2)(x,t).
\notag
\ee
By \eqref{estimate of tilde G},
\be
\begin{split}
I
& \leq  \int_0^t \int_{-\infty}^\infty (t-s)^{-1}\sum_{j=1}^{n+1}e^{-\frac{|x-y-a_j(t-s)|^2}{M(t-s)}} \theta^2(y,s)dyds \\
%& = \int_0^t \int_{-\infty}^\infty (t-s)^{-1}\sum_{j=1}^{n+1}e^{-\frac{|x-y-a_j(t-s)|^2}{m(t-s)}}\big[\sum_{k=1}^{n+1}(1+s)^{-\frac{1}{2}}e^{-\frac{|y-a_ks|^2}{ms}}\big]^2 dyds \\
%& \leq  \int_0^t \int_{-\infty}^\infty (t-s)^{-1}\sum_{j=1}^{n+1}e^{-\frac{|x-y-a_j(t-s)|^2}{m(t-s)}}\sum_{k=1}^{n+1}(1+s)^{-1}e^{-\frac{|y-a_ks|^2}{ms}} dyds \\
%& = \int_0^t \int_{-\infty}^\infty (t-s)^{-1}(1+s)^{-1}\sum_{j=1}^{n+1}\sum_{k=1}^{n+1}e^{-\frac{|x-y-a_j(t-s)|^2}{m(t-s)}}e^{-\frac{|y-a_ks|^2}{ms}} dyds \\
& \leq C\int_0^t \int_{-\infty}^\infty (t-s)^{-1}(1+s)^{-1}\sum_{j,k=1}^{n+1}e^{-\frac{|x-y-a_j(t-s)|^2}{M(t-s)}}e^{-\frac{2|y-a_ks|^2}{M's}} dyds \\
& \leq C\int_0^t \int_{-\infty}^\infty (t-s)^{-1}(1+s)^{-1}\sum_{j,k=1}^{n+1}e^{-\frac{|x-y-a_j(t-s)|^2}{N(t-s)}}e^{-\frac{|y-a_ks|^2}{Ns}} dyds,
%& \qquad + C \int_0^t \int_{-\infty}^\infty(t-s)^{-1}(1+s)^{-1}\sum_{j \neq k}e^{-\frac{|x-y-a_j(t-s)|^2}{M(t-s)}}e^{-\frac{|y-a_ks|^2}{Ms}} dyds \\
%& = A+B
\end{split}
\notag
\ee
for sufficiently large $N>0$ with $ \frac{M'}{2} < N < M'$. Noting first that for any $j=k$,
\be
\int_{-\infty}^\infty e^{-\frac{|x-y-a_j(t-s)|^2}{N(t-s)}} e^{-\frac{|x-a_j s|^2}{N(1+s)}} dy \leq C(1+t)^{-\frac{1}{2}}(t-s)^{\frac{1}{2}}(1+s)^{\frac{1}{2}}e^{-\frac{|x-a_j t|^2}{N(1+t)}},
\notag
\ee
we have
\be
\begin{split}
I \leq C \sum_{j=1}^{n+1} (1+t)^{-\frac{1}{2}}e^{-\frac{|x-a_j t|^2}{M'(1+t)}}\int_0^t (t-s)^{-\frac{1}{2}}(1+s)^{-\frac{1}{2}} ds \leq C \sum_{j=1}^{n+1} (1+t)^{-\frac{1}{2}}e^{-\frac{|x-a_j t|^2}{M'(1+t)}}.
\end{split}
\notag
\ee
We now assume $j \neq k$. Noting first that for $j \neq k$,
\be
\int_{-\infty}^\infty e^{-\frac{|x-y-a_j(t-s)|^2}{N(t-s)}} e^{-\frac{|y-a_k s|^2}{N(1+s)}} dy \leq C(1+t)^{-\frac{1}{2}}(t-s)^{\frac{1}{2}}(1+s)^{-\frac{1}{2}}e^{-\frac{|x-a_j (t-s)-a_k|^2}{N(1+t)}},
\notag
\ee
we have
\be \label{NE1}
\begin{split}
I \leq C \sum_{j \neq k } (1+t)^{-\frac{1}{2}}\int_0^t  (t-s)^{-\frac{1}{2}}(1+s)^{-\frac{1}{2}} e^{-\frac{|x-a_j (t-s)-a_k s|^2}{N(1+t)}}ds =I'.
\end{split}
\ee
To estimate the right hand side($I'$) of \eqref{NE1}, we consider 6 cases only with assumption $x \leq 0$. The case $x\geq 0$ is entirely symmetric. \\

\textbf{Case 1.} $x \leq 0$ and  $0 \leq a_j <  a_k$. In this case, we can rewrite
\be \label{relation 1}
x-a_j (t-s)-a_k s =(x-a_jt)+(a_j-a_k)s.
\ee
Here,  $x-a_j t$ and $(a_j-a_k)s$ are both negative and there is no cancellation, so we have
\be \label{estimate II'}
\begin{split}
I
& \leq C \sum_{j=1}^{n+1} (1+t)^{-\frac{1}{2}}e^{-\frac{|x-a_j t|^2}{M'(1+t)}}\int_0^t  (t-s)^{-\frac{1}{2}}(1+s)^{-\frac{1}{2}} ds \\
&  \leq C \sum_{j=1}^{n+1} (1+t)^{-\frac{1}{2}}e^{-\frac{|x-a_j t|^2}{M'(1+t)}}.
\end{split}
\ee
\\

\textbf{Case 2.} $x \leq 0$ and  $0 \leq a_k < a_j$. This is exactly same as the case 1 with rewriting
\be \label{relation 2}
x-a_j (t-s)-a_k s = (x-a_k t)-(a_j-a_k)(t-s).
\ee
\\

\textbf{Case 3.} $x \leq 0$ and  $a_k < 0 \leq a_j$. In this case, we consider two subcases  $|x| \geq |a_k|t$ and  $|x| \leq |a_k|t$. For $|x| \geq |a_k|t$, $x-a_k t$ and $-(a_j-a_k)(t-s)$ are both negative and no cancellation occurs in \eqref{relation 2}, so we have the same estimate as \eqref{estimate II'}.
%\be
%\begin{split}
%|II'| \leq C \sum_{ k=1 }^{n+1} (1+t)^{-\frac{1}{2}}e^{-\frac{|x-a_k|^2}{M(1+t)}}\int_0^t  (t-s)^{-\frac{1}{2}}(1+s)^{-\frac{1}{2}} ds \leq  C \sum_{ k=1 }^{n+1}(1+t)^{-\frac{1}{2}}e^{-\frac{|x-a_k|^2}{M(1+t)}}.
%\end{split}
%\ee
In the event $|x|\leq  |a_k|t$, we integrate $I'$ separately $[0, t/2]$ and $[t/2, t]$. For $s \in [0, t/2]$, since $x-a_jt$ is negative and $(a_j-a_k)s$  is positive in \eqref{relation 1}, cancellation occurs. In this case, we use the following balance estimate: 
\be
\begin{split}
& (1+s)^{-\frac{1}{2}} e^{-\frac{|x-a_j (t-s)-a_k s|^2}{N(1+t)}} \\
& \qquad  \leq C\Big[(1+|x-a_jt|)^{-\frac{1}{2}} e^{-\frac{|x-a_j (t-s)-a_k s|^2}{N(1+t)}} + (1+s)^{-\frac{1}{2}} e^{-\frac{|x-a_jt|^2}{M'(1+t)}}\Big].
\notag
\end{split}
\ee
We can easily prove this by considering two cases $(a_j-a_k)s \geq C|x-a_j t|$ and $(a_j-a_k)s \leq C|x-a_j t|$ for some constant $C>0$ in the relation \eqref{relation 1}. So we have,
\be \label{balance estimate for s}
\begin{split}
%&  (1+t)^{-\frac{1}{2}}\int_0^{t/2}  (t-s)^{-\frac{1}{2}}(1+s)^{-\frac{1}{2}} e^{-\frac{|x-a_j (t-s)-a_k s|^2}{N(1+t)}}ds \\
I'
& \leq C(1+t)^{-\frac{1}{2}}(1+|x-a_jt|)^{-\frac{1}{2}}\int_0^{t/2}  (t-s)^{-\frac{1}{2}} e^{-\frac{|x-a_j (t-s)-a_k s|^2}{N(1+t)}}ds \\
& \qquad + (1+t)^{-\frac{1}{2}}e^{-\frac{|x-a_jt|^2}{M'(1+t)}}\int_0^{t/2} (t-s)^{-\frac{1}{2}}(1+s)^{-\frac{1}{2}} ds \\
& \leq C\Big[ (1+|x|+t)^{-\frac{1}{2}}(1+|x-a_jt|)^{-\frac{1}{2}} + (1+t)^{-\frac{1}{2}}e^{-\frac{|x-a_jt|^2}{M'(1+t)}}\Big].  \\
\end{split}
\ee
Here, the last inequality is from $|x|\leq  |a_k|t$. 
%Indeed,
%\be
%(1+t)^{-\frac{1}{2}} \leq C(1+t+t)^{-\frac{1}{2}}\leq C(1+|x|+t)^{-\frac{1}{2}}.
%\ee
In the case $s \in [t/2,t]$, we start with rewriting \eqref{relation 2}. Since $x-a_k t$ and $-(a_j-a_k)(t-s)$ have opposite signs in \eqref{relation 2}, we argue similarly the balance estimate for $\ds  (t-s)^{-\frac{1}{2}} e^{-\frac{|x-a_j (t-s)-a_k s|^2}{N(1+t)}}$. Thus we have
\be \label{balance estimate for t-s}
\begin{split}
%&  (1+t)^{-\frac{1}{2}}\int_{t/2}^t (1+s)^{-\frac{1}{2}} (t-s)^{-\frac{1}{2}} e^{-\frac{|x-a_j (t-s)-a_k s|^2}{N(1+t)}}ds \\
I'
& \leq C(1+t)^{-\frac{1}{2}}(1+|x-a_k t|)^{-\frac{1}{2}} \int_{t/2}^t  (1+s)^{-\frac{1}{2}} e^{-\frac{|x-a_j (t-s)-a_k s|^2}{N(1+t)}} ds\\
& \qquad + C(1+t)^{-\frac{1}{2}}e^{-\frac{|x-a_kt|^2}{M'(1+t)}}\int_{t/2}^t   (t-s)^{-\frac{1}{2}}(1+s)^{-\frac{1}{2}} ds \\
&  \leq C\Big[(1+|x|+t)^{-\frac{1}{2}}(1+|x-a_kt|)^{-\frac{1}{2}} + (1+t)^{-\frac{1}{2}} e^{-\frac{|x-a_kt|^2}{M'(1+t)}}\Big].
\end{split}
\ee
\\

\textbf{Case 4.} $x \leq 0$ and  $a_j < 0 \leq a_k$. This is exactly same as the case 3 by considering $|x|\geq |a_j|t$ and $|x|\leq |a_j|t$.
\\

\textbf{Case 5.} $x \leq 0$ and  $a_k < a_j<0 $. In this case, we consider 3 subcases, $|x|\geq |a_k|t$, $|x| \leq |a_j|t$ and $|a_j|t \leq |x| \leq |a_k|t$. For $|x|\geq |a_k|t$ and $|x| \leq |a_j|t$, we use \eqref{relation 2} and \eqref{relation 1} respectively because the expression $x-a_j(t-s)-a_k s$ has no cancellation.  In the event that $|a_j|t \leq |x| \leq |a_k|t$, we use the balance estimate for $s \in [0, t/2]$ and $s\in [t/2,t]$ similarly to \eqref{balance estimate for s} and \eqref{balance estimate for t-s}, respectively.
\\

\textbf{Case 6.} $x \leq 0$ and  $a_j < a_k<0 $. This is exactly same as the case 5 by considering $|x|\geq |a_j|t$, $|x| \leq |a_k|t$ and $|a_k|t \leq |x| \leq |a_j|t$.

\end{proof}

%%%%%%%%%%%%%%%%%%%%%%%%%%%%%%%%%%%%%%%%%%%%%%%%%%%%%%%%%%%%%%%%%%%%%%%%%%%%%%%%%%%%%%%%%%%%%%%
%%%%%%%%%%%%%%%%%%%%%%%%%%%%%%%%%%%%%%%%%%%%%%%%%%%%%%%%%%%%%%%%%%%%%%%%%%%%%%%%%%%%%%%%%%%%%%%
%%%%%%%%%%%%%%%%%%%%%%%%%%%%%%%%%%%%%%%%%%%%%%%%%%%%%%%%%%%%%%%%%%%%%%%%%%%%%%%%%%%%%%%%%%%%%%%

\begin{proof}[\textbf{Proof of the estimate (\ref{nonlinear estimate2})}] We now estimate the second nonlinear term of $v$,
\be
II = \int_0^t \int_{-\infty}^\infty |\tilde G_y(x,t-s;y)||\psi_1(y,s)|^2dyds \leq CE_0(\theta+\psi_1+\psi_2)(x,t).
\notag
\ee
Notice first that
\be
\begin{split}
II
& \leq  \int_0^t \int_{-\infty}^\infty (t-s)^{-1}\sum_{j=1}^{n+1}e^{-\frac{|x-y-a_j(t-s)|^2}{M(t-s)}} |\psi_1(y,s)|^2dyds \\
& \leq  \int_0^t \int_{-\infty}^\infty (t-s)^{-1}\sum_{j=1}^{n+1}e^{-\frac{|x-y-a_j(t-s)|^2}{M(t-s)}}\Big[\chi(y,s) \sum_{k=1}^{n+1}(1+|y|+s)^{-\frac{1}{2}}(1+|y-a_k s|)^{-\frac{1}{2}}\Big]^2 dyds . \\
\end{split}
\notag
\ee
It is enough to estimate
\be
\begin{split}
II' = \sum_{j,k=1}^{n+1}\int_0^t \int_{a_1s}^{a_{n+1}s} (t-s)^{-1}e^{-\frac{|x-y-a_j(t-s)|^2}{M(t-s)}}(1+|y|+s)^{-1}(1+|y-a_k s|)^{-1} dyds.   \\
\end{split}
\notag
\ee
We estimate $II'$ by considering three parts:   $x<a_1t$, $x>a_{n+1}t$ and $x \in [a_1t, a_{n+1}t]$. \\

For $x<a_1t$, we use  $x-y-a_j(t-s) = (x-a_1t)-(y-a_1s)-(a_j-a_1)(t-s)$ for $y \in [a_1s, a_js]$ and $x-y-a_j(t-s)=(x-a_1t)-(y-a_js)-(a_j-a_1)t $  for $y\in [a_js, a_{n+1}s]$ so that  they have no cancellation.  Thus,  we have
\be \label{NE2}
\begin{split}
& \int_0^t \int_{a_1s}^{a_{n+1}s} (t-s)^{-1}e^{-\frac{|x-y-a_j(t-s)|^2}{M(t-s)}}(1+|y|+s)^{-1}(1+|y-a_k s|)^{-1} dyds \\
& \leq e^{-\frac{|x-a_1t|^2}{2Mt}}\int_0^t \int_{a_1s}^{a_js} (t-s)^{-1}e^{-\frac{|x-y-a_j(t-s)|^2}{2M(t-s)}}(1+|y|+s)^{-1}(1+|y-a_k s|)^{-1} dyds \\
& \quad + e^{-\frac{|x-a_1t|^2}{2Mt}}\int_0^t \int_{a_js}^{a_{n+1}s} (t-s)^{-1}e^{-\frac{|x-y-a_j(t-s)|^2}{2M(t-s)}}(1+|y|+s)^{-1}(1+|y-a_k s|)^{-1} dyds \\
& \leq e^{-\frac{|x-a_1t|^2}{M't}}\int_0^t \int_{a_1s}^{a_{n+1}s} (t-s)^{-1}e^{-\frac{|x-y-a_j(t-s)|^2}{2M(t-s)}}(1+|y|+s)^{-1}(1+|y-a_k s|)^{-1} dyds \\
&  \leq (1+t)^{-\frac{1}{2}}e^{-\frac{|x-a_1t|^2}{M't}} \\
\end{split}
\ee
To argue the final inequality, let's show the following estimate with an assumption $a_1\leq 0$,  
\be \label{NE2.1}
\int_0^t \int_{a_1s}^{0}(t-s)^{-1}(1+|y|+s)^{-1}(1+|y-a_k s|)^{-\frac{1}{2}} e^{-\frac{|x-y-a_j(t-s)|^2}{2M(t-s)}}dyds  \\ \leq C(1+t)^{-\frac{1}{2}}. 
\ee
If $a_k >0$, then
\be
\begin{split}
& \int_0^t \int_{a_1s}^{0}(t-s)^{-1}(1+|y|+s)^{-1}(1+|y-a_k s|)^{-\frac{1}{2}} e^{-\frac{|x-y-a_j(t-s)|^2}{2M(t-s)}}dyds \\
& \leq \int_0^t \int_{a_1s}^{0} (t-s)^{-1}(1+|y|+s)^{-\frac{3}{2}} e^{-\frac{|x-y-a_j(t-s)|^2}{2M(t-s)}}dyds  \\
&  \leq C\int_0^{t/2} \int_{a_1s}^{0} (t-s)^{-1}(1+|y|+s)^{-\frac{3}{2}} dyds  \\
& \qquad \qquad + C \int_{t/2}^t  (t-s)^{-\frac{1}{2}}(1+s)^{-\frac{3}{2}} \int_{a_1s}^{0}  (t-s)^{-\frac{1}{2}}e^{-\frac{|x-y-a_j(t-s)|^2}{2M(t-s)}}dyds  \\
& \leq (1+t)^{-\frac{1}{2}}.
\end{split}
\notag
\ee
If $a_k \leq 0$, then
\be
\begin{split}
& \int_0^t \int_{a_1s}^{0}(t-s)^{-1}(1+|y|+s)^{-1}(1+|y-a_k s|)^{-\frac{1}{2}} e^{-\frac{|x-y-a_j(t-s)|^2}{bM(t-s)}}dyds \\
& \leq C \int_0^{t/2}  (t-s)^{-1}(1+s)^{-1} \int_{a_1s}^{0} (1+|y-a_ks|)^{-\frac{1}{2}}dy ds \\
& \qquad +\int_{t/2}^t  (t-s)^{-\frac{1}{2}}(1+s)^{-1} \int_0^{t/2}(t-s)^{-\frac{1}{2}}e^{-\frac{|x-y-a_j(t-s)|^2}{2M(t-s)}}  dyds \\
& \leq C \int_0^{t/2}  (t-s)^{-1}(1+s)^{-\frac{1}{2}}  ds + \int_{t/2}^t  (t-s)^{-\frac{1}{2}}(1+s)^{-1} ds \\
& \leq C(1+t)^{-\frac{1}{2}}.
\end{split}
\notag
\ee
Similarly,  we can prove 
\be
\int_0^t \int_{0}^{a_{n+1}s}(t-s)^{-1}(1+|y|+s)^{-1}(1+|y-a_k s|)^{-\frac{1}{2}} e^{-\frac{|x-y-a_j(t-s)|^2}{2M(t-s)}}dyds  \\ \leq C(1+t)^{-\frac{1}{2}}, 
\notag
\ee
with an assumption $a_{n+1}\geq 0$. \\

For $x>a_{n+1}t$, we use  $x-y-a_j(t-s) = (x-a_{n+1}t)-(y-a_js)+(a_{n+1}-a_j)t$  for $y \in [a_1s, a_js]$ and $x-y-a_j(t-s)=(x-a_{n+1}t)-(y-a_{n+1}s)-(a_j-a_{n+1})(t-s) $ and for $y\in [a_js, a_{n+1}s]$. Then we argue similarly to  estimate \eqref{NE2}. \\

%So we have
%\be
%\begin{split}
%& \int_0^t \int_{a_1s}^{a_{n+1}s} (t-s)^{-1}e^{-\frac{|x-y-a_j(t-s)|^2}{M(t-s)}}(1+|y|+s)^{-1}(1+|y-a_k s|)^{-1} dyds \\
%& \leq e^{-\frac{|x-a_{n+1}t|^2}{2Mt}}\int_0^t \int_{a_1s}^{a_js} (t-s)^{-1}e^{-\frac{|x-y-a_j(t-s)|^2}{M(t-s)}}(1+|y|+s)^{-1}(1+|y-a_k s|)^{-1} dyds \\
%& \quad + e^{-\frac{|x-a_{n+1}t|^2}{2Mt}}\int_0^t \int_{a_js}^{a_{n+1}s} (t-s)^{-1}e^{-\frac{|x-y-a_j(t-s)|^2}{M(t-s)}}(1+|y|+s)^{-1}(1+|y-a_k s|)^{-1} dyds \\
%& = e^{-\frac{|x-a_{n+1}t|^2}{2Mt}}\int_0^t \int_{a_1s}^{a_{n+1}s} (t-s)^{-1}e^{-\frac{|x-y-a_j(t-s)|^2}{2M(t-s)}}(1+|y|+s)^{-1}(1+|y-a_k s|)^{-1} dyds \\
%& \leq \theta^2 \quad \text{same as back.. }
%\end{split}
%\ee

We now assume $x \in [a_1t, a_{n+1}t]$ with $a_1<0$ and $a_{n+1}>0$. We estimate $II'$  into two parts, 
\be
II'_N=\int_0^t \int_{a_1s}^{0} \quad \text{and} \quad II'_P =  \int_0^t \int_{0}^{a_{n+1}s}. 
\notag
\ee
This is why we can assume  $a_1 \leq 0$ and $a_{n+1}\geq 0$. For $j=k$ which is a simple case, we first notice that
\be
\begin{split}
& (1+|y-a_j s|)^{-\frac{1}{2}} e^{-\frac{|x-y-a_j(t-s)|^2}{M(t-s)}} \\
& \leq (1+|x-a_jt|)^{-\frac{1}{2}}e^{-\frac{|x-y-a_j(t-s)|^2}{M(t-s)}} +(1+|y-a_j s|)^{-\frac{1}{2}}e^{-\frac{|x-a_jt|^2}{M't}}  e^{-\frac{|x-y-a_j(t-s)|^2}{bM(t-s)}},
\end{split}
\notag
\ee
for some constant $b>0$. Then we have
\be
\begin{split}
%& \int_0^t \int_{a_1s}^{0} (t-s)^{-1}e^{-\frac{|x-y-a_j(t-s)|^2}{M(t-s)}}(1+|y|+s)^{-1}(1+|y-a_j s|)^{-1} dyds  \\
%&  \leq \int_0^t \int_{a_1s}^{0} (t-s)^{-1}(1+|y|+s)^{-1}(1+|y-a_j s|)^{-\frac{1}{2}} (1+|x-a_jt|)^{-\frac{1}{2}}e^{-\frac{|x-y-a_j(t-s)|^2}{M(t-s)}}dyds  \\
%& \quad + \int_0^t \int_{a_1s}^{0} (t-s)^{-1}(1+|y|+s)^{-1}(1+|y-a_j s|)^{-1}e^{-\frac{|x-a_jt|^2}{M't}}  e^{-\frac{|x-y-a_j(t-s)|^2}{bM(t-s)}}dyds \\
II'_N 
&  \leq  C(1+|x-a_jt|)^{-\frac{1}{2}} \int_0^t \int_{a_1s}^{0}(t-s)^{-1}(1+|y|+s)^{-1}(1+|y-a_j s|)^{-\frac{1}{2}} e^{-\frac{|x-y-a_j(t-s)|^2}{M(t-s)}}dyds  \\
& \quad + Ce^{-\frac{|x-a_jt|^2}{M't}}  \int_0^t \int_{a_1s}^{0} (t-s)^{-1}(1+|y|+s)^{-1}(1+|y-a_j s|)^{-1} e^{-\frac{|x-y-a_j(t-s)|^2}{bM(t-s)}}dyds \\
& \leq  C\Big[(1+|x-a_jt|)^{-\frac{1}{2}}+e^{-\frac{|x-a_jt|^2}{M't}} \Big] \\
& \qquad \qquad \times \int_0^t \int_{a_1s}^{0}(t-s)^{-1}(1+|y|+s)^{-1}(1+|y-a_j s|)^{-\frac{1}{2}} e^{-\frac{|x-y-a_j(t-s)|^2}{bM(t-s)}}dyds  \\
& \leq  C\Big[(1+|x-a_jt|)^{-\frac{1}{2}}+e^{-\frac{|x-a_jt|^2}{M't}} \Big] (1+t)^{-\frac{1}{2}}. 
\end{split}
\notag
\ee
Here, we already proved the last inequality in \eqref{NE2.1}. Similarly,  we estimate $II'_P$.

Let's estimate $II'$ for  $j \neq  k$ with $a_k<0$(for the case of $a_k > 0$,  we can estimate $II'_p$ similarly to $II'_N$ in the case $a_k < 0$ and estimate $II'_N$ similarly to $II'_P$ in the case $a_k < 0$). It is easy to estimate $II'_p$ while we have to consider several cases again for $II'_N$ . For $II'_p$, since $a_k<0$ and $y \geq 0$, we say that $1+|y-a_k s| \sim 1+|y|+s$, and so we have
\be
\begin{split}
II'_p \leq  \sum_{j=1}^{n+1}\int_0^t \int_{0}^{a_{n+1}s} (t-s)^{-1}e^{-\frac{|x-y-a_j(t-s)|^2}{M(t-s)}}(1+|y|+s)^{-\frac{1}{2}}(1+s)^{-\frac{3}{2}}dyds.
\end{split}
\notag
\ee
Noting first that
\be
\begin{split}
& (1+|y|+s)^{-\frac{1}{2}}e^{-\frac{|x-y-a_j(t-s)|^2}{M(t-s)}} \\
& \leq (1+|x-a_j(t-s)|+s)^{-\frac{1}{2}}e^{-\frac{|x-y-a_j(t-s)|^2}{M(t-s)}}+(1+|y|+s)^{-\frac{1}{2}}e^{-\frac{|x-a_j(t-s)|^2}{M'(t-s)}}e^{-\frac{|x-y-a_j(t-s)|^2}{bM(t-s)}} ,
\end{split}
\notag
\ee
we have
\be
\begin{split}
II'_p
& \leq \int_0^t\int_{0}^{a_{n+1}s} (t-s)^{-1} (1+|x-a_j(t-s)|+s)^{-\frac{1}{2}}(1+s)^{-\frac{3}{2}}e^{-\frac{|x-y-a_j(t-s)|^2}{M(t-s)}}dyds \\
& \quad + \int_0^t \int_{0}^{a_{n+1}s} (t-s)^{-1}(1+|y|+s)^{-\frac{1}{2}}(1+s)^{-\frac{3}{2}}e^{-\frac{|x-a_j(t-s)|^2}{M'(t-s)}}e^{-\frac{|x-y-a_j(t-s)|^2}{bM(t-s)}}  dyds,\\
&  = A+B. 
\end{split}
\notag
\ee
For $B$,
\be
\begin{split}
B
& \leq\int_0^{t/2}  (t-s)^{-1} (1+s)^{-1}e^{-\frac{|x-a_j(t-s)|^2}{M'(t-s)}} ds +\int_{t/2}^t  (t-s)^{-\frac{1}{2}} (1+s)^{-2}e^{-\frac{|x-a_j(t-s)|^2}{M'(t-s)}} ds \\
& \leq   (1+t)^{-\frac{1}{2}}  \int_0^t (t-s)^{-\frac{1}{2}} (1+s)^{-\frac{1}{2}} e^{-\frac{|x-a_j(t-s)|^2}{M'(t-s)}} ds \\
& \leq CI'
\end{split}
\notag
\ee
which is estimated in the proof of \eqref{nonlinear estimate1}. 
For $A$, noting first that
\be
\begin{split}
(1+|x-a_j(t-s)|+s)^{-\frac{1}{2}}
&  \leq C(1+|x-a_jt|+s)^{-\frac{1}{2}} + C(1+|x-a_j(t-s)|+|x-a_jt|)^{-\frac{1}{2}} \\
& \leq C(1+|x-a_jt|)^{-\frac{1}{2}},
\end{split}
\notag
\ee
we have
\be
\begin{split}
A
&\leq  (1+|x-a_jt|)^{-\frac{1}{2}} \int_0^t \int_{0}^{a_{n+1}s}(t-s)^{-1} (1+s)^{-\frac{3}{2}}e^{-\frac{|x-y-a_j(t-s)|^2}{M(t-s)}}dyds \\
&\leq   (1+|x-a_jt|)^{-\frac{1}{2}} \int_0^t  (t-s)^{-\frac{1}{2}} (1+s)^{-\frac{3}{2}}ds \\
&\leq C(1+t)^{-\frac{1}{2}}(1+|x-a_jt|)^{-\frac{1}{2}} .
\end{split}
\notag
\ee

We now  estimate $II'_N$. To estimate this part, we agrue several cases. We try here only the case $x<0$ and $a_k<0<a_j$. We can agrue similarly other cases. Using $x-y-a_j(t-s)=(x-a_j(t-s)-a_ks)-(y-a_ks)$, we have

\be
\begin{split}
& (1+|y-a_k s|)^{-\frac{1}{2}}e^{-\frac{|x-y-a_j(t-s)|^2}{M(t-s)}} \\
& \quad \leq C(1+|x-a_j(t-s)-a_ks|)^{-\frac{1}{2}}e^{-\frac{|x-y-a_j(t-s)|^2}{M(t-s)}}  \\
& \qquad + (1+|y-a_k s|)^{-\frac{1}{2}}e^{-\frac{|x-a_j(t-s)-a_ks|^2}{N(t-s)}}e^{-\frac{|x-y-a_j(t-s)|^2}{bM(t-s)}}, 
\end{split}
\notag
\ee
for some constant $b>0$. 
Thus,
\be
\begin{split}
%& \int_0^t \int_{a_1s}^{0} (t-s)^{-1}e^{-\frac{|x-y-a_j(t-s)|^2}{M(t-s)}}(1+|y|+s)^{-1}(1+|y-a_k s|)^{-1} dyds  \\
II'_N
& \leq \int_0^t \int_{a_1s}^{0} (t-s)^{-1}(1+|y|+s)^{-1}(1+|y-a_k s|)^{-\frac{1}{2}} \\
& \qquad \qquad \qquad \qquad  \qquad \times (1+|x-a_j(t-s)-a_ks|)^{-\frac{1}{2}}e^{-\frac{|x-y-a_j(t-s)|^2}{M(t-s)}} dyds \\
& \qquad + \int_0^t \int_{a_1s}^{0} (t-s)^{-1}(1+|y|+s)^{-1}(1+|y-a_k s|)^{-1} \\
& \qquad \qquad \qquad \qquad \qquad \times e^{-\frac{|x-a_j(t-s)-a_ks|^2}{N(t-s)}}e^{-\frac{|x-y-a_j(t-s)|^2}{bM(t-s)}}dyds \\
& = A+B.
\end{split}
\notag
\ee
For $B$,
\be
\begin{split}
B
& \leq \int_0^{t/2} (t-s)^{-1}e^{-\frac{|x-a_j(t-s)-a_ks|^2}{N(t-s)}}\int_{a_1s}^{0} (1+|y|+s)^{-1}(1+|y-a_k s|)^{-1}dyds \\
& \qquad \qquad + \int_{t/2}^t (t-s)^{-1}e^{-\frac{|x-a_j(t-s)-a_ks|^2}{M(t-s)}}\int_{a_1s}^{0} (1+|y|+s)^{-1}e^{-\frac{|x-y-a_j(t-s)|^2}{bM(t-s)}}dyds \\
& \leq  C\int_0^{t/2} (t-s)^{-1}e^{-\frac{|x-a_j(t-s)-a_ks|^2}{M(t-s)}}(1+s)^{-1}\ln(1+s)ds \\
& \qquad  \qquad + \int_{t/2}^t (t-s)^{-\frac{1}{2}}e^{-\frac{|x-a_j(t-s)-a_ks|^2}{M(t-s)}}(1+s)^{-1}ds \\
& \leq C(1+t)^{-\frac{1}{2}}\int_0^t (t-s)^{-\frac{1}{2}}(1+s)^{-\frac{1}{2}}e^{-\frac{|x-a_j(t-s)-a_ks|^2}{M(t-s)}}ds  \\
& \leq C(\theta+\psi_1+\psi_2)(x,t).
\end{split}
\notag
\ee
%Here, for $[0,t/2]$, we integrate $(1+|y-a_k s|)^{-1}$ and ignore Gaussian kernel and for $[t/2,t]$, we integrate Gaussian kernel and ignore $(1+|y-a_k s|)^{-1}$. 
Here, the last inequality is proved in $I'$. 

To estimate $A$, we seperate $A$ into two parts $|x|\geq |a_k|t$ and $|x| \leq  |a_k|t$.  For $|x|\geq |a_k|t$, using
\be
x-a_j(t-s)-a_ks=(x-a_kt)-(a_j-a_k)(t-s),
\notag
\ee
for which there is  no cancellation, we have
\be
\begin{split}
A
& \leq (1+|x-a_kt|)^{-\frac{1}{2}} \int_0^t (t-s)^{-1}\int_{a_1s}^{0} (1+|y|+s)^{-1}(1+|y-a_k s|)^{-\frac{1}{2}} e^{-\frac{|x-y-a_j(t-s)|^2}{M(t-s)}} dyds\\
& \leq (1+|x-a_kt|)^{-\frac{1}{2}} \Big[ \int_0^{t/2}(t-s)^{-1}(1+s)^{-1}(1+s)^{\frac{1}{2}}ds+ \int_{t/2}^t (t-s)^{-1}(1+s)^{-1}(t-s)^{\frac{1}{2}}ds\Big]\\
%& \leq (1+t)^{-\frac{1}{2}} (1+|x-a_kt|)^{-\frac{1}{2}}  \int_{0}^t (t-s)^{-\frac{1}{2}}(1+s)^{-\frac{1}{2}}ds   \\
& \leq (1+t)^{-\frac{1}{2}} (1+|x-a_kt|)^{-\frac{1}{2}}.
\end{split}
\notag
\ee
For $|x| \leq  |a_k|t$, we divide again the analysis into the cases $s \in [0,t/2]$ and $s \in [t/2,t]$. In the case $s \in [0,t/2]$, using
\be
x-a_j(t-s)-a_ks=(x-a_jt)+(a_j-a_k)s,
\notag
\ee
%If $2|(a_j-a_k)s| \leq |x-a_jt|$,
%\be
%1+|x-a_j(t-s)-a_ks| = 1+|(x-a_jt)+(a_j-a_k)s| \geq 1+\frac{1}{2}|x-a_jt|=C(1+|x|+t)
%\ee
%If $2|(a_j-a_k)s| \geq |x-a_jt|$,
%\be
%1+|y|+s \geq C(1+|y|+|x|+t),
%\ee
we have
\be
\begin{split}
& (1+|x-a_j(t-s)-a_ks|)^{-\frac{1}{2}}(1+|y|+s)^{-\frac{1}{2}} \\
& \leq C[(1+|x-a_jt|)^{-\frac{1}{2}}(1+|y|+s)^{-\frac{1}{2}}+  (1+|x-a_j(t-s)-a_ks|)^{-\frac{1}{2}}(1+|y|+|x-a_jt|)^{-\frac{1}{2}}]. \\
%& \leq C[(1+|x|+t)^{-\frac{1}{2}}(1+|y|+s)^{-\frac{1}{2}} +  (1+|x-a_j(t-s)-a_ks|)^{-\frac{1}{2}}(1+|y|+|x|+t)^{-\frac{1}{2}} ].
\end{split}
\notag
\ee
%(we can easily prove this by cosidering $2|(a_j-a_k)s| \leq |x-a_jt|$ and $2|(a_j-a_k)s| \geq |x-a_jt|$)
Thus, we consider $A$ into two terms $A'$ and $A''$.
For $A'$,
\be
\begin{split}
A'
&  \leq   C(1+|x-a_jt|)^{-\frac{1}{2}}\int_0^{t/2}(t-s)^{-1}\int_{a_1s}^{0}(1+|y-a_k s|)^{-\frac{1}{2}} e^{-\frac{|x-y-a_j(t-s)|^2}{M(t-s)}} (1+|y|+s)^{-1} dyds \\
&  \leq C(1+|x-a_jt|)^{-\frac{1}{2}}\int_0^{t/2}(t-s)^{-1}(1+s)^{-1} \int_{a_1s}^{0}(1+|y-a_k s|)^{-\frac{1}{2}} e^{-\frac{|x-y-a_j(t-s)|^2}{M(t-s)}} dyds \\
&  \leq C(1+|x-a_jt|)^{-\frac{1}{2}}\int_0^{t/2}(t-s)^{-1}(1+s)^{-1}(1+s)^{\frac{1}{2}}ds \\
%&  \leq C(1+t)^{-\frac{1}{2}}(1+|x-a_jt|)^{-\frac{1}{2}}\int_0^{t/2}(t-s)^{-\frac{1}{2}}(1+s)^{-\frac{1}{2}}ds \\
&  \leq C(1+t)^{-\frac{1}{2}}(1+|x-a_jt|)^{-\frac{1}{2}}.
\end{split}
\notag
\ee
For $A''$,
\be 
\begin{split}
A''
%& = (1+|x-a_jt|)^{-\frac{1}{2}}\int_0^{t/2}(t-s)^{-1}(1+|x-a_j(t-s)-a_ks|)^{-\frac{1}{2}} \\
%& \qquad \qquad \qquad \qquad \times \int_{a_1s}^{0}(1+|y-a_k s|)^{-\frac{1}{2}} e^{-\frac{|x-y-a_j(t-s)|^2}{M(t-s)}} (1+|y|+s)^{-\frac{1}{2}} dyds\\
%& \leq C(1+|x-a_jt|)^{-\frac{1}{2}}\int_0^{t/2}(t-s)^{-1}(1+s)^{-\frac{1}{2}} \int_{a_1s}^{0}(1+|y-a_k s|)^{-\frac{1}{2}} (1+|x-a_j(t-s)-a_ks|)^{-\frac{1}{2}}dyds  \\
& \leq C(1+|x-a_jt|)^{-\frac{1}{2}}\int_0^{t/2}(t-s)^{-1}(1+s)^{-\frac{1}{2}} (1+|x-a_j(t-s)-a_ks|)^{-\frac{1}{2}}(1+s)^{\frac{1}{2}}ds  \\
& \leq  C(1+t)^{-1}(1+|x-a_jt|)^{-\frac{1}{2}}\int_0^{t/2} (1+|x-a_j(t-s)-a_ks|)^{-\frac{1}{2}}ds \\
%& \leq  C(1+t)^{-1}(1+|x-a_jt|)^{-\frac{1}{2}}((1+t)^{\frac{1}{2}}+(1+|x-a_jt|)^{\frac{1}{2}}) \\
%&  \leq C(1+t)^{-\frac{1}{2}}(1+|x-a_jt|)^{-\frac{1}{2}} +(1+t)^{-1} \\
%& \leq C(1+t)^{-\frac{1}{2}}(1+|x-a_jt|)^{-\frac{1}{2}} + (1+t)^{-\frac{1}{2}}(1+|x|+t)^{-\frac{1}{2}} \\
& \leq C(1+t)^{-1}(1+|x-a_jt|)^{-\frac{1}{2}}(1+t)^{\frac{1}{2}}\\
& \leq C(1+t)^{-\frac{1}{2}}(1+|x-a_jt|)^{-\frac{1}{2}}.
\end{split}
\notag
\ee
Here, the inequalities are from  $|x| \leq |a_k|t$ and $(|x|+t) \sim |x-a_jt|$ because of $x<0$ and $a_j>0$.

%\textbf{Claim.} $\ds\int_0^t (1+|x-a_j(t-s)-a_ks|)^{-\frac{1}{2}}ds \leq  C(1+t)^{\frac{1}{2}}+C(1+|x-a_jt|)^{\frac{1}{2}}$.
%\be
%\begin{split}
%& \int_0^t (1+|x-a_j(t-s)-a_ks|)^{-\frac{1}{2}}ds \\
%& =\int_{\frac{x-a_jt}{a_k-a_j}}^t (1+x-a_jt-(a_k-a_j)s)^{-\frac{1}{2}}ds +\int_0^{\frac{x-a_jt}{a_k-a_j}} (1-x+a_jt+(a_k-a_j)s)^{-\frac{1}{2}}ds \\
%& \leq C(1+x-a_kt)^{\frac{1}{2}}+ C(1-x+a_jt)^{\frac{1}{2}} \\
%& \leq C(1+t)^{\frac{1}{2}}+C(1+|x-a_jt|)^{\frac{1}{2}}.
%\end{split}
%\ee

In the case $s \in [t/2,t]$,  using
\be
x-a_j(t-s)-a_ks=(x-a_kt)-(a_j-a_k)(t-s), 
\notag
\ee
we have
\be \label{for contradiction}
\begin{split}
& (t-s)^{-\frac{1}{2}}(1+|x-a_j(t-s)-a_ks|)^{-\frac{1}{2}} \\
& \quad \leq  C[|x-a_kt|^{-\frac{1}{2}}(1+|x-a_j(t-s)-a_ks|)^{-\frac{1}{2}}+(t-s)^{-\frac{1}{2}}(1+|x-a_kt|)^{-\frac{1}{2}}].
\end{split}
\ee
Thus,
\be\label{NE5}
\begin{split}
A
& \leq C(1+t)^{-\frac{1}{2}}e^{-\frac{|x-a_kt|^2}{M't}} \\
& \quad +  \int_{t/2}^t \int_{a_1s}^{0} (t-s)^{-\frac{1}{2}}(1+|y|+s)^{-1}(1+|y-a_k s|)^{-\frac{1}{2}} \\
& \qquad \qquad \qquad \qquad \times e^{-\frac{|x-y-a_j(t-s)|^2}{M(t-s)}} |x-a_kt|^{-\frac{1}{2}}(1+|x-a_j(t-s)-a_ks|)^{-\frac{1}{2}}dyds \\
& \quad + \int_{t/2}^t \int_{a_1s}^{0} (t-s)^{-\frac{1}{2}}(1+|y|+s)^{-1}(1+|y-a_k s|)^{-\frac{1}{2}} \\
& \qquad \qquad \qquad \qquad \times e^{-\frac{|x-y-a_j(t-s)|^2}{M(t-s)}}(t-s)^{-\frac{1}{2}}(1+|x-a_kt|)^{-\frac{1}{2}}dyds \\
& =C(1+t)^{-\frac{1}{2}}e^{-\frac{|x-a_kt|^2}{M't}} + A' +A''.
\end{split}
\ee
Here, since $|A| \leq C(1+t)^{-\frac{1}{2}}$, for $|x-a_kt| \leq C\sqrt t$, we get the first term. So to estimate $A'$, we assume $|x-a_kt| \geq C\sqrt t $ and $t>1$. If $t\leq 1$, then $|x-a_kt| \geq C\sqrt t \geq Ct \geq C(t-s)$ which is a contraction to the expression \eqref{for contradiction}. Then,  we have
\be
\begin{split}
A'
& \leq |x-a_kt|^{-\frac{1}{2}}\int_{t/2}^t (1+s)^{-1}(1+|x-a_j(t-s)-a_ks|)^{-\frac{1}{2}}\int_{a_1s}^{0}  (t-s)^{-\frac{1}{2}}e^{-\frac{|x-y-a_j(t-s)|^2}{M(t-s)}}dyds \\
& \leq  (1+t)^{-1}(1+|x-a_kt|)^{-\frac{1}{2}}\int_{t/2}^t (1+|x-a_j(t-s)-a_ks|)^{-\frac{1}{2}}ds \\
& \leq  (1+t)^{-\frac{1}{2}}(1+|x-a_kt|)^{-\frac{1}{2}}
\notag
\end{split}
\ee
and
\be
\begin{split}
A''
& \leq (1+|x-a_kt|)^{-\frac{1}{2}}\int_{t/2}^t (1+s)^{-1}(t-s)^{-\frac{1}{2}}\int_{a_1s}^{0}  (t-s)^{-\frac{1}{2}}e^{-\frac{|x-y-a_j(t-s)|^2}{M(t-s)}}dyds \\
& \leq C(1+t)^{-\frac{1}{2}}(1+|x-a_kt|)^{-\frac{1}{2}}. 
\notag
\end{split}
\ee

\begin{remark}
We argue other cases very similarly. However, for some cases, we need to separate $A$ into three parts, not just two parts. For example, in the case of $a_k<a_j<0$, we need to consider $A$ by $|x|\geq |a_k|t$, $|x|\leq |a_j|t$ and $|a_j|t \leq |x| \leq |a_k|t$. 
\end{remark}

\end{proof}

%%%%%%%%%%%%%%%%%%%%%%%%%%%%%%%%%%%%%%%%%%%%%%%
%%%%%%%%%%%%%%%%%%%%%%%%%%%%%%%%%%%%%%%%%%%%%%%
%%%%%%%%%%%%%%%%%%%%%%%%%%%%%%%%%%%%%%%%%%%%%%%%

\begin{proof}[\textbf{Proof of the estimate (\ref{nonlinear estimate3})}] We now estimate the third nonlinear term of $v$,
\be
III = \int_0^t \int_{-\infty}^\infty |\tilde G_y(x,t-s;y)||\psi_2(y,s)|^2dyds \leq CE_0(\theta+\psi_1+\psi_2)(x,t).
\notag
\ee
Notice that
\be
\begin{split}
III
%& = \int_0^t \int_{-\infty}^\infty (t-s)^{-1}\sum_{j=1}^{n+1}e^{-\frac{|x-y-a_j(t-s)|^2}{M(t-s)}} \psi_2^2(y,s)dyds \\
& \leq  \int_0^t \int_{-\infty}^\infty (t-s)^{-1}\sum_{j=1}^{n+1}e^{-\frac{|x-y-a_j(t-s)|^2}{M(t-s)}} \\
& \quad  \times \Big[(1-\chi(y,s))(1+|y-a_1s|+\sqrt s)^{-\frac{3}{2}} +  (1-\chi(y,s))(1+|y-a_{n+1}s|+\sqrt s)^{-\frac{3}{2}}\Big]^2 dyds. \\
\end{split}
\notag
\ee
We here estimate
\be
\begin{split}
III'= \int_0^t \int_{-\infty}^{a_1s} (t-s)^{-1}e^{-\frac{|x-y-a_j(t-s)|^2}{M(t-s)}}(1+|y-a_1s|+\sqrt s)^{-3} dyds. \\
\end{split}
\notag
\ee
We can argue the other terms similarly. Using
\be
x-y-a_j(t-s)=(x-a_j(t-s)-a_1s)-(y-a_1s),
\notag
\ee
we have
\be
\begin{split}
& (1+|y-a_1s|+\sqrt s)^{-\frac{3}{2}} e^{-\frac{|x-y-a_j(t-s)|^2}{M(t-s)}} \\
& \quad \leq  (1+|(x-a_j(t-s)-a_1s)|+\sqrt s)^{-\frac{3}{2}} e^{-\frac{|x-y-a_j(t-s)|^2}{M(t-s)}} \\
& \qquad \qquad \qquad + (1+|y-a_1s|+\sqrt s)^{-\frac{3}{2}} e^{-\frac{|x-a_j(t-s)-a_1s|^2}{N(t-s)}}e^{-\frac{|x-y-a_j(t-s)|^2}{bM(t-s)}}, 
\notag
\end{split}
\ee
for some constant $b>0$. Thus,
\be\label{NE10}
\begin{split}
III'
%& = \int_0^t \int_{-\infty}^{a_1s} (t-s)^{-1}e^{-\frac{|x-y-a_j(t-s)|^2}{M(t-s)}}(1+|y-a_1s|+\sqrt s)^{-3} dyds \\
& \leq \int_0^t \int_{-\infty}^{a_1s} (t-s)^{-1}(1+|y-a_1s|+\sqrt s)^{-\frac{3}{2}} (1+|(x-a_j(t-s)-a_1s)|+\sqrt s)^{-\frac{3}{2}} e^{-\frac{|x-y-a_j(t-s)|^2}{M(t-s)}}dyds \\
& \quad + \int_0^t \int_{-\infty}^{a_1s} (t-s)^{-1}(1+|y-a_1s|+\sqrt s)^{-3}e^{-\frac{|x-a_j(t-s)-a_1s|^2}{N(t-s)}}e^{-\frac{|x-y-a_j(t-s)|^2}{bM(t-s)}}dyds. 
\end{split}
\ee
The second term of \eqref{NE10} is estimated by $I'$  because 
\be
\begin{split}
& \int_0^t \int_{-\infty}^{a_1s} (t-s)^{-1}(1+|y-a_1s|+\sqrt s)^{-3}e^{-\frac{|x-a_j(t-s)-a_1s|^2}{M'(t-s)}}e^{-\frac{|x-y-a_j(t-s)|^2}{bM(t-s)}}dyds \\
& \leq \int_0^{t/2} (t-s)^{-1} e^{-\frac{|x-a_j(t-s)-a_1s|^2}{N(t-s)}}\int_{-\infty}^{a_1s}(1+|y-a_1s|+\sqrt s)^{-3}dyds \\
& \qquad + \int_{t/2}^t  (t-s)^{-1}(1+\sqrt s)^{-3} e^{-\frac{|x-a_j(t-s)-a_1s|^2}{N(t-s)}}\int_{-\infty}^{a_1s}e^{-\frac{|x-y-a_j(t-s)|^2}{bM(t-s)}} dyds\\
& \leq C(1+t)^{-\frac{1}{2}} \int_0^{t/2} (t-s)^{-\frac{1}{2}} e^{-\frac{|x-a_j(t-s)-a_1s|^2}{N(t-s)}}(1+s)^{-1}ds\\
& \qquad + C(1+t)^{-\frac{1}{2}}\int_{t/2}^t  (t-s)^{-1}(1+s)^{-1} e^{-\frac{|x-a_j(t-s)-a_1s|^2}{N(t-s)}}(t-s)^{\frac{1}{2}}ds \\
& \leq CI'. 
\end{split}
\notag
\ee
We now prove the first term of $III'$:
\be
\begin{split}
III''
& =\int_0^t \int_{-\infty}^{a_1s} (t-s)^{-1}(1+|y-a_1s|+\sqrt s)^{-\frac{3}{2}} \\
& \qquad \qquad \qquad \qquad \times (1+|(x-a_j(t-s)-a_1s)|+\sqrt s)^{-\frac{3}{2}} e^{-\frac{|x-y-a_j(t-s)|^2}{M(t-s)}}dyds.  \\
\end{split}
\notag
\ee

For  $x < a_1 t$, using
\be
x-a_j(t-s)-a_1s = (x-a_1t)-(a_j-a_1)(t-s)
\notag
\ee
for which there is no cancellation, we have
\be
\begin{split}
III''
& \leq  \int_0^t \int_{-\infty}^{a_1s} (t-s)^{-1}(1+|y-a_1s|+\sqrt s)^{-\frac{3}{2}} (1+|x-a_1t|+\sqrt s)^{-\frac{3}{2}} e^{-\frac{|x-y-a_j(t-s)|^2}{M(t-s)}} dyds\\
& \leq C(1+t)^{-1}  (1+|x-a_1t|)^{-\frac{3}{2}} \int_0^{t/2} \int_{-\infty}^{a_1s}(1+|y-a_1s|+\sqrt s)^{-\frac{3}{2}} dyds \\
& \qquad +  (1+|x-a_1t|+\sqrt t)^{-\frac{3}{2}}(1+\sqrt t)^{-\frac{3}{2}}\int_{t/2}^t \int_{-\infty}^{a_1s} (t-s)^{-1} e^{-\frac{|x-y-a_j(t-s)|^2}{M(t-s)}} dyds\\
& \leq C(1+t)^{-1} (1+|x-a_1t|)^{-\frac{3}{2}} \int_0^{t/2}  (1+s)^{-\frac{1}{4}}ds \\
& \qquad +  (1+|x-a_1t|+\sqrt t)^{-\frac{3}{2}}(1+\sqrt t)^{-\frac{3}{2}}\int_{t/2}^t (t-s)^{-\frac{1}{2}} ds\\
%& \leq C(1+t)^{-1}  (1+|x-a_1t|)^{-\frac{3}{2}} \int_0^{t/2} (1+s)^{-\frac{1}{4}}ds \\
%& \qquad \qquad + (1+|x-a_1t|+\sqrt t)^{-\frac{3}{2}}(1+\sqrt t)^{-\frac{3}{2}}\int_{t/2}^t (t-s)^{-\frac{1}{2}}ds\\
& \leq C(1+t)^{-\frac{1}{4}}  (1+|x-a_1t|)^{-\frac{3}{2}} + C(1+t)^{-\frac{1}{4}}  (1+|x-a_1t|+\sqrt t)^{-\frac{3}{2}} \\
& \leq C(1+|x-a_1t|+\sqrt t)^{-\frac{3}{2}}  +(1+t)^{-\frac{1}{2}}e^{-\frac{|x-a_1t|^2}{M't}}.
\notag
\end{split}
\ee
For the last inequlaity, we consider two cases $|x-a_1t|\leq \sqrt t$ and $|x-a_1t| \geq \sqrt t$. Since $|III''| \leq (1+t)^{-\frac{1}{2}}$, we have $(1+t)^{-\frac{1}{2}}e^{-\frac{|x-a_1t|^2}{M't}}$ , for $|x-a_1t|\leq \sqrt t$. For  $|x-a_1t| \geq \sqrt t$,  $(1+|x-a_1t|)^{-\frac{3}{2}} \leq C(1+|x-a_1t|+\sqrt t)^{-\frac{3}{2}}$. \\
%For $|x|\leq |a_1|t$, we consider two cases $a_j<0$ and $a_j>0$. \\

For $x > a_{n+1} t$, using
\be
x-a_j(t-s)-a_1s = (x-a_{n+1}t)-(a_j-a_{n+1})(t-s)
\notag
\ee
for which there is no cancellation, we estimate $III''$ similarly to $x < a_1 t$. Thus, for  $x > a_{n+1} t$, we have 
\be
III'' \leq  C(1+|x-a_{n+1}t|+\sqrt t)^{-\frac{3}{2}}  +(1+t)^{-\frac{1}{2}}e^{-\frac{|x-a_{n+1}t|^2}{M't}}.
\notag
\ee
\\

Now we consider the last part $a_1t < x< a_{n+1}t$. We prove only the case $x < 0$ and $a_j\leq 0$. We can prove other cases similarly. In this part, we need to consider two cases. \\

\textbf{case1.} $|x| \leq |a_j|t$. Here, we have no cancellation in
\be
x-a_j(t-s)-a_1s=(x-a_jt)+(a_j-a_1)s.
\notag
\ee
Thus, we argue similarly to the event $x<a_1t$. \\

\textbf{case2.} $|x| \geq |a_j|t$.  In this case, we consider $s\in [0,t/2]$ and $s\in [t/2,t]$ similarly to the proof of \eqref{nonlinear estimate2}. For  $s\in [0,t/2]$, noting first that
\be
x-a_j(t-s)-a_1s=(x-a_jt)+(a_j-a_1)s,
\notag
\ee
we have
\be
\begin{split}
&(1+|x-a_j(t-s)-a_1s|+\sqrt s)^{-\frac{3}{2}} \\
& \quad \leq (1+|x-a_jt|+\sqrt s)^{-\frac{3}{2}} + (1+|x-a_j(t-s)-a_1s|+|x-a_jt|^{\frac{1}{2}}+\sqrt s)^{-\frac{3}{2}} .
\end{split}
\notag
\ee
Thus,
\be
\begin{split}
III''
& \leq \int_0^{t/2} \int_{-\infty}^{a_1s} (t-s)^{-1}(1+|y-a_1s|+\sqrt s)^{-\frac{3}{2}} (1+|x-a_jt|+\sqrt s)^{-\frac{3}{2}} e^{-\frac{|x-y-a_j(t-s)|^2}{M(t-s)}} dyds\\
& \qquad + \int_0^{t/2} \int_{-\infty}^{a_1s} (t-s)^{-1}(1+|y-a_1s|+\sqrt s)^{-\frac{3}{2}} \\
& \qquad \qquad \qquad \times (1+|x-a_j(t-s)-a_1s|+|x-a_jt|^{\frac{1}{2}}+\sqrt s)^{-\frac{3}{2}}e^{-\frac{|x-y-a_j(t-s)|^2}{M(t-s)}} dyds\\
& \leq  C\int_0^{t/2}  (t-s)^{-1}(1+|x-a_jt|+\sqrt s)^{-\frac{3}{2}} \int_{-\infty}^{a_1s}(1+|y-a_1s|+\sqrt s)^{-\frac{3}{2}}dyds\\
& \qquad +  \int_0^{t/2}(t-s)^{-1} (1+|x-a_jt|^{\frac{1}{2}}+\sqrt s)^{-\frac{3}{2}} \int_{-\infty}^{a_1s} (1+|y-a_1s|+\sqrt s)^{-\frac{3}{2}}dyds\\
& \leq C (1+|x-a_jt|)^{-\frac{1}{2}} \int_0^{t/2} (t-s)^{-1}(1+\sqrt s)^{-1} (1+\sqrt s)^{-\frac{1}{2}}ds\\
& \qquad + C(1+|x-a_jt|^{\frac{1}{2}})^{-1}\int_0^{t/2} (t-s)^{-1}(1+\sqrt s)^{-\frac{1}{2}}(1+\sqrt s)^{-\frac{1}{2}}ds \\
& \leq C(1+t)^{-\frac{1}{2}}(1+|x-a_jt|)^{-\frac{1}{2}} \\
& \leq C(1+t+|x|)^{-\frac{1}{2}}(1+|x-a_jt|)^{-\frac{1}{2}}.  
%& = A +B
\end{split}
\notag
\ee
%For $A$,
%\be
%\begin{split}
%A
%& \leq  C\int_0^{t/2}  (t-s)^{-1}(1+|x-a_jt|+\sqrt s)^{-\frac{3}{2}} \int_{-\infty}^{a_1s}(1+|y-a_1s|+\sqrt s)^{-\frac{3}{2}}dyds\\
%& \leq C (1+t)^{-1}(1+|x-a_jt|)^{-\frac{1}{2}} \int_0^{t/2} (1+\sqrt s)^{-1} (1+\sqrt s)^{-\frac{1}{2}}ds\\
%& \leq C(1+t)^{-\frac{1}{2}}(1+|x-a_jt|)^{-\frac{1}{2}} \\
%& \leq C(1+t+|x|)^{-\frac{1}{2}}(1+|x-a_jt|)^{-\frac{1}{2}} \quad (\text{because of $|x| \leq |a_1|t$})
%\end{split}
%\ee
%For $B$,
%\be
%\begin{split}
%B
%& \leq C(1+t)^{-1}(1+|x-a_jt|^{\frac{1}{2}})^{-1}\int_0^{t/2} (1+\sqrt s)^{-\frac{1}{2}}(1+\sqrt s)^{-\frac{1}{2}}ds \\
%& \leq C(1+t)^{-1}(1+|x-a_jt|^{\frac{1}{2}})^{-1}\int_0^{t/2} (1+\sqrt s)^{-\frac{1}{2}}(1+\sqrt s)^{-\frac{1}{2}}ds \\
%& \leq C(1+t)^{-\frac{1}{2}}(1+|x-a_jt|)^{-\frac{1}{2}} .
%\end{split}
%\ee
For $s\in [t/2,t]$, noting first that
\be
x-a_j(t-s)-a_1s=(x-a_1t)-(a_j-a_1)(t-s),
\notag
\ee
we have
\be
\begin{split}
&(t-s)^{-\frac{1}{2}}(1+|x-a_j(t-s)-a_1s|+\sqrt s)^{-\frac{3}{2}} \\
& \quad \leq |x-a_1t|^{-\frac{1}{2}}(1+|x-a_j(t-s)-a_1s|+\sqrt s)^{-\frac{3}{2}} + (t-s)^{-\frac{1}{2}} (1+|x-a_1t|+\sqrt s)^{-\frac{3}{2}} .
\end{split}
\notag
\ee
Thus,
\be
\begin{split}
III'' 
& \leq C(1+t)^{-\frac{1}{2}}e^{-\frac{|x-a_1t|^2}{Mt}} \\
& \quad +  \int_{t/2}^t  \int_{-\infty}^{a_1s} (t-s)^{-\frac{1}{2}}(1+|y-a_1s|+\sqrt s)^{-\frac{3}{2}}|x-a_1t|^{-\frac{1}{2}} \\
& \qquad \qquad \qquad \qquad \times  (1+|x-a_j(t-s)-a_1s|+\sqrt s)^{-\frac{3}{2}} e^{-\frac{|x-y-a_j(t-s)|^2}{M(t-s)}} dyds\\
& \quad +  \int_{t/2}^t \int_{-\infty}^{a_1s} (t-s)^{-1}(1+|y-a_1s|+\sqrt s)^{-\frac{3}{2}}  (1+|x-a_1t|+\sqrt s)^{-\frac{3}{2}} e^{-\frac{|x-y-a_j(t-s)|^2}{M(t-s)}} dyds\\
&  \leq C(1+t)^{-\frac{1}{2}}e^{-\frac{|x-a_1t|^2}{Mt}} +  A+B.
\notag
\end{split}
\ee
By the same argument as \eqref{NE5}, we assume $|x-a_1t| \geq C >0$ for $ A$. Finally, we have  
\be
\begin{split}
A
& \leq (1+|x-a_1t|)^{-\frac{1}{2}}\int_{t/2}^t  \int_{-\infty}^{a_1s} (t-s)^{-\frac{1}{2}}(1+|y-a_1s|+\sqrt s)^{-\frac{3}{2}} \\
& \qquad \qquad \times (1+|x-a_j(t-s)-a_1s|+\sqrt s)^{-\frac{3}{2}} e^{-\frac{|x-y-a_j(t-s)|^2}{M(t-s)}} dyds\\
& \leq (1+|x-a_1t|)^{-\frac{1}{2}}\int_{t/2}^t (1+\sqrt s)^{-\frac{3}{2}}(1+|x-a_j(t-s)-a_1s|+\sqrt s)^{-\frac{3}{2}} \\
& \qquad \qquad \times \int_{-\infty}^{a_1s} (t-s)^{-\frac{1}{2}}  e^{-\frac{|x-y-a_j(t-s)|^2}{M(t-s)}} dyds\\
& \leq (1+\sqrt t)^{-\frac{3}{2}}(1+|x-a_1t|)^{-\frac{1}{2}}\int_{t/2}^t (1+|x-a_j(t-s)-a_1s|+\sqrt s)^{-\frac{3}{2}}ds \\
& \leq C(1+t)^{-\frac{1}{2}}(1+|x-a_1t|)^{-\frac{1}{2}}
\notag
\end{split}
\ee
and 
\be
\begin{split}
B
& \leq \int_{t/2}^t \int_{-\infty}^{a_1s} (t-s)^{-1}(1+|y-a_1s|+\sqrt s)^{-\frac{3}{2}}  (1+|x-a_1t|+\sqrt s)^{-\frac{3}{2}} e^{-\frac{|x-y-a_j(t-s)|^2}{M(t-s)}} dyds\\
& \leq  (1+|x-a_1t|+\sqrt t)^{-\frac{3}{2}}\int_{t/2}^t  (t-s)^{-\frac{1}{2}}(1+\sqrt s)^{-\frac{3}{2}} ds \\
& \leq  (1+|x-a_1t|+\sqrt t)^{-\frac{3}{2}} \\
& \leq (1+t)^{-\frac{1}{2}}(1+|x-a_1t|)^{-\frac{1}{2}}.
\notag
\end{split}
\ee

\end{proof}

Now we complete the proof of \eqref{linear estimate} - \eqref{nonlinear estimate3}, which implies that we have \eqref{inequality of zeta}.

\end{proof}

\begin{proof}[\textbf{Proof of theorem (\ref{main theorem2})}]
By  Lemma \ref{main lemma}, we have  $\zeta(t) \leq C(E_0+\zeta^2(t))$ for all $t\geq 0$ for which $\zeta(t)$ defined in \eqref{zeta} is finite. Since $\zeta(t)$ is continous so long as it remains finite, it follows by continous induction that $\zeta(t) \leq 2CE_0$ for all $t\geq 0$ provided $E_0 \leq \frac{1}{4C^2}$ and (as holds without loss of generality) $C\geq 1$. Thus, recalling $\zeta(t):=\sup_{0\leq s\leq t, x\in \RR} |(v,\vp_t, \vp_x, \vp_{xx})(x,s)|(\theta + \psi_1+\psi_2)^{-1}$, we have
\be
|v(x,t)| \leq CE_0(\theta + \psi_1+\psi_2),
\notag
\ee
for all $t \leq 0$ and all $x\in \RR$.

\end{proof}

\medskip

{\bf Acknowledgement.}
This project was completed while studying within the PhD program
at Indiana University, Bloomington.
Thanks to my thesis advisor Kevin Zumbrun for suggesting
the problem and for helpful discussions.

\end{document}